\newcommand{\sg}{\textnormal{sg}}
\newtheorem{theorem}{Theorem}
\newtheorem{lemma}[theorem]{Lemma}
\newtheorem{corollary}[theorem]{Corollary}
\newtheorem{proposition}[theorem]{Proposition}
\theoremstyle{remark}
\theoremstyle{definition}
\numberwithin{theorem}{section} 
\numberwithin{equation}{section}
\numberwithin{example}{section}
\newcommand{\Z}{\mathbb{Z}}
\renewcommand{\Im}{\text {\rm Im}}
\title[On Hecke-type double-sums and general string functions]{On Hecke-type double-sums and general string functions for the affine Lie algebra $A_{1}^{(1)}$}
\begin{document}

\date {6 October 2021}

\subjclass[2020]{11B65, 11F27}

\keywords{Hecke-type double-sums, string functions, theta functions, affine Lie algebras}

\begin{abstract}
We demonstrate how formulas that express Hecke-type double-sums in terms of theta functions and Appell--Lerch functions---the building blocks of Ramanujan's mock theta functions---can be used to give general string function formulas for the affine Lie algebra $A_{1}^{(1)}$ for levels  $N=1,2,3,4$. 
\end{abstract}

\author{Eric T. Mortenson}
\address{Department of Mathematics and Computer Science, Saint Petersburg State University, Saint Petersburg, 199178, Russia}
\email{etmortenson@gmail.com}

\maketitle


\section{Notation}\label{section:notation}
 Let $q$ be a complex number where $q:=e^{2\pi i \tau}$ and $\tau\in\mathfrak{H}:=\{z\in \mathbb{C} | \Im{z}>0\}$.  Define $\mathbb{C}^*:=\mathbb{C}-\{0\}$.  We recall basic notation such as
\begin{gather*}
(x)_n=(x;q)_n:=\prod_{i=0}^{n-1}(1-q^ix), \ \ (x)_{\infty}=(x;q)_{\infty}:=\prod_{i= 0}^{\infty}(1-q^ix),\\
{\text{and }}\ \ j(x;q):=\sum_{n=-\infty}^{\infty}(-1)^nq^{\binom{n}{2}}x^n=(x)_{\infty}(q/x)_{\infty}(q)_{\infty},
\end{gather*}
where in the last line the equivalence of product and sum follows from Jacobi's triple product identity.  We draw the reader's attention to the fact that $j(q^n;q)=0$ for $n\in \mathbb{Z}.$

Let $a$ and $m$ be integers with $m$ positive.  We give special notation to frequently encountered theta functions
\begin{gather*}
J_{a,m}:=j(q^a;q^m), \ \ \overline{J}_{a,m}:=j(-q^a;q^m), \ J_m:=J_{m,3m}=\prod_{i= 1}^{\infty}(1-q^{mi}),
\end{gather*}
and we also recall Dedekind's eta-function:
\begin{equation*}
\eta(\tau):=q^{\tfrac{1}{24}}\prod_{n=1}^{\infty}(1-q^{n}).
\end{equation*}

We define an Appell-Lerch function as follows.   Let $x,z\in\mathbb{C}^*$ with neither $z$ nor $xz$ an integral power of $q$. Then
\begin{equation}
m(x,q,z):=\frac{1}{j(z;q)}\sum_r\frac{(-1)^rq^{\binom{r}{2}}z^r}{1-q^{r-1}xz}.\label{equation:mxqz-def}
\end{equation}

Lastly, we recall a useful form of a Hecke-type double-sum \cite{HM}.  Let $x,y\in\mathbb{C}^*$, then
\begin{equation}
f_{a,b,c}(x,y,q):=\Big ( \sum_{r,s\ge 0 }-\sum_{r,s<0}\Big )(-1)^{r+s}x^ry^sq^{a\binom{r}{2}+brs+c\binom{s}{2}},\label{equation:fabc-def}
\end{equation}
where we can also write
\begin{equation}
f_{a,b,c}(x,y,q)=\sum_{\substack{r,s\in\mathbb{Z}\\ \sg(r)=\sg(s)}}\sg(r)(-1)^{r+s}x^ry^sq^{a\binom{r}{2}+brs+c\binom{s}{2}}, \ \sg(r):=\begin{cases}
1 & \textup{if} \ r\ge0,\\
-1 & \textup{if} \  r<0.
\end{cases}
\end{equation}

\section{Introduction}\label{section:introduction}

In \cite{KP}, Kac and Peterson give several examples of string functions for affine Lie algebras of type $A_{1}^{(1)}$ that have beautiful evaluations in terms of theta functions.  See also \cite{KW1, KW2}.  Their string functions are closely related to the real quadratic fields $\mathbb{Q}(\sqrt{m(m+2)})$.  Indeed, if we fix a positive integer $m$, their string functions are of the form \cite[p. 260]{KP}:
\begin{equation}
\eta(\tau)^3c_{\lambda}^{\Lambda}(\tau)
=\sum_{\substack{(x,y)\in\mathbb{R}^2\\ -|x|<y\le |x|\\
(x,y) \ \textup{or} \ (1/2-x,1/2+y)\in ((N+1)/2(m+2),n/2m)+\mathbb{Z}^2}}
\sg(x)q^{(m+2)x^2-my^2},\label{equation:KP-fullGlory}
\end{equation}
where $N$ and $n$ are integers with $n\equiv N \pmod 2$.  Here we will change the notation from $c_{\lambda}^{\Lambda}(\tau)$ and write
\begin{equation}
c_{N-m,m}^{N-\ell,\ell}(\tau),
\end{equation}
where we have replaced Kac and Peterson's notation $(n,N,m)$ with $(m,\ell,N)$ of \cite{SW}.   In this form, $m$ and $\ell$ parametrize the maximal (resp. highest) weight in terms of the fundamental weights of the affine Kac--Moody algebra $\mathfrak{g}=A_1^{(1)}$.  See \cite{KP} and \cite{SW} for more details on string functions.

From \cite{SW} we recall that $m,\ell, N$ are integers with $N\ge1$, $\ell \in\{0,1,2,\dots,N \}$, and $m\equiv \ell \pmod 2$.   From \cite[p. 260]{KP}, \cite{SW} we find that
\begin{equation}
c_{N-m,m}^{N-\ell,\ell}:=C_{m,\ell}^{N}(q)
=q^{s(m,\ell,N)}\cdot \mathcal{C}_{m,\ell}^{N}(q),\label{equation:gen-fabc-SW}
\end{equation}
where
\begin{multline}\label{sffinal}
\mathcal{C}_{m,\ell}^N(q)=\frac{1}
{J_{1}^3}\sum_{j\in\Z}
\sum_{i\in\mathbb{N}} (-1)^i q^{\frac{1}{2}i(i+m)+j((N+2)j+\ell+1)} \\
\times
\Bigl\{q^{\frac{1}{2}i(2(N+2)j+\ell+1)}-q^{-\frac{1}{2}i(2(N+2)j+\ell+1)}\Bigr\},
\end{multline}
and
\begin{equation}
s(m,\ell,N):=-\frac{1}{8}+\frac{(\ell+1)^2}{4(N+2)}-\frac{m^2}{4N}.\label{equation:s-def}
\end{equation}
In \cite{MPS} we derived the useful form:
\begin{equation}
\mathcal{C}_{m,\ell}^{N}(q)=\frac{1}{J_{1}^3}\cdot f_{1,1+N,1}(q^{1+\tfrac{1}{2}(m+\ell)},q^{1-\tfrac{1}{2}(m-\ell)},q).
\label{equation:SW-fabc}
\end{equation}

Identity (\ref{equation:SW-fabc}) is very useful in computing the modularity of string functions.  In \cite{HM}, one finds many formulas where Hecke-type double-sums are expressed in terms of Appell-Lerch functions and theta functions.   Appell--Lerch functions are the building blocks of Ramanujan's mock theta functions.   A simple example of general results found in \cite{HM} reads
\begin{align}
f_{1,2,1}(x,y,q)&=j(y;q)m\big (\frac{q^2x}{y^2},q^3,-1\big )+j(x;q)m\big (\frac{q^2y}{x^2},q^3,-1\big )
\label{equation:f121}\\
&\ \ \ \ \ \ \  - \frac{yJ_3^3j(-x/y;q)j(q^2xy;q^3)}
{\overline{J}_{0,3}j(-qy^2/x;q^3)j(-qx^2/y;q^3)}.\notag
\end{align}
Such double-sum formulas provide a straightforward method for proving identities for Ramanujan's mock theta functions; in particular, the formulas give new proofs of the mock theta conjectures \cite{H1, H2, HM}.  As an example, one sees from (\ref{equation:f121}) that
\begin{align}
f_{1,2,1}(q,q,q)&=j(q;q)m(q,q^3,-1)+j(q;q)m(q,q^3,-1)
-\frac{1}{\overline{J}_{0,3}}\cdot \frac{qJ_{3}^3\overline{J}_{0,1}J_{4,3}}
{\overline{J}_{2,3}^2}\label{equation:f121-calculation}\\
&=0+0+J_1^2=J_1^2,\notag
\end{align}
where we remind the reader that $j(x;q)=0$ if and only if $x$ is an integral power of $q$.

String functions satisfy many symmetries \cite{SW}:
{\allowdisplaybreaks \begin{align}
C_{m,\ell}^{N}(q)&=C_{-m,\ell}^{N}(q),\label{equation:string-symmetry-1}\\
C_{m,\ell}^{N}(q)&=C_{2N-m,\ell}^{N}(q),\label{equation:string-symmetry-2}\\
C_{m,\ell}^{N}(q)&=C_{N-m,N-\ell}^{N}(q).\label{equation:string-symmetry-3}
\end{align}}%
Looking for a computationally easier approach to using (\ref{equation:SW-fabc}) and formulas found in \cite{HM} led to new symmetries.  The author, Postnova, and Solovyev found \cite{MPS}:

\begin{theorem} \cite[Theorem 1.1]{MPS}\label{theorem:main-result} We have
{\allowdisplaybreaks \begin{align}
C_{m,\ell}^{2K}(q)\pm C_{2K-m,\ell}^{2K}(q)
&=\frac{q^{s(m,\ell,2K)}}{J_{1}^3}\Big ( f_{K+1,K+1,1}(\pm q^{1+\frac{1}{2}(K+\ell)},q^{1+\frac{1}{2}(m+\ell)},q)
\label{equation:OP-split-1}\\
& \ \ \ \ \  \pm q^{\frac{1}{2}(K-\ell)}f_{K+1,K+1,1}(\pm q^{1+\frac{1}{2}(3K-\ell)},q^{1+K+\frac{1}{2}(m-\ell)},q)\Big).\notag
\end{align}}%
\end{theorem}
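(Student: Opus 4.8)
The plan is to convert the assertion into an identity purely among Hecke-type double-sums and then to establish that identity by reindexing. First I would use (\ref{equation:gen-fabc-SW}) and (\ref{equation:SW-fabc}) to rewrite the left-hand side through $f_{1,2K+1,1}$'s. From (\ref{equation:s-def}),
\[
s(2K-m,\ell,2K)-s(m,\ell,2K)=\frac{m^{2}-(2K-m)^{2}}{8K}=\frac{m-K}{2},
\]
so that $C_{m,\ell}^{2K}(q)=\dfrac{q^{s(m,\ell,2K)}}{J_{1}^{3}}f_{1,2K+1,1}(q^{1+\tfrac12(m+\ell)},q^{1-\tfrac12(m-\ell)},q)$ and $C_{2K-m,\ell}^{2K}(q)=\dfrac{q^{s(m,\ell,2K)}}{J_{1}^{3}}q^{\tfrac12(m-K)}f_{1,2K+1,1}(q^{1+\tfrac12(2K-m+\ell)},q^{1-\tfrac12(2K-m-\ell)},q)$. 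Cancelling the common factor $q^{s(m,\ell,2K)}/J_{1}^{3}$, the theorem becomes equivalent to
\begin{multline*}
f_{1,2K+1,1}(q^{1+\tfrac12(m+\ell)},q^{1-\tfrac12(m-\ell)},q)\pm q^{\tfrac12(m-K)}f_{1,2K+1,1}(q^{1+\tfrac12(2K-m+\ell)},q^{1-\tfrac12(2K-m-\ell)},q)\\
=f_{K+1,K+1,1}(\pm q^{1+\tfrac12(K+\ell)},q^{1+\tfrac12(m+\ell)},q)\pm q^{\tfrac12(K-\ell)}f_{K+1,K+1,1}(\pm q^{1+\tfrac12(3K-\ell)},q^{1+K+\tfrac12(m-\ell)},q).
\end{multline*}
This reduction is routine.

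Next I would attack this double-sum identity by a change of summation variables. Expanding each $f_{a,b,c}$ via the ``$\sg(r)=\sg(s)$'' form of (\ref{equation:fabc-def}) and using $\binom{r}{2}+(2K+1)rs+\binom{s}{2}=\binom{r+s}{2}+2Krs$, the crucial observation is that the substitution $R=2s,\ S=r-s$ carries $\binom{r}{2}+(2K+1)rs+\binom{s}{2}$ to $(K+1)\binom{R}{2}+(K+1)RS+\binom{S}{2}$ up to a term linear in $r,s$; on Gram matrices this reads $A^{T}\!\left(\begin{smallmatrix}K+1&K+1\\K+1&1\end{smallmatrix}\right)\!A=\left(\begin{smallmatrix}1&2K+1\\2K+1&1\end{smallmatrix}\right)$ with $A=\left(\begin{smallmatrix}0&2\\1&-1\end{smallmatrix}\right)$, $\det A=-2$. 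The leftover linear term is absorbed by the monomial factors $x^{r}y^{s}$: one finds, for example, that $R=2s,\ S=r-s$ sends the first term on the left into the even-$R$ portion of $f_{K+1,K+1,1}(\pm q^{1+\tfrac12(K+\ell)},q^{1+\tfrac12(m+\ell)},q)$, using also $R+S=r+s$ to match $(-1)^{r+s}$ with $(-1)^{R+S}$. Since $\det A=-2$, the companion substitution $R=2s+1,\ S=r-s$ supplies the odd-$R$ portion; the two summands on the left are linked by the inverse shifts $q^{1+\tfrac12(2K-m+\ell)}=q^{K-m}q^{1+\tfrac12(m+\ell)}$ and $q^{1-\tfrac12(2K-m-\ell)}=q^{m-K}q^{1-\tfrac12(m-\ell)}$, while the two on the right are linked by the common shift $q^{K-\ell}$ in both arguments together with the prefactor $q^{\tfrac12(K-\ell)}$; and the choice of sign matches across the two sides through the even/odd dissection of $R$ (since $(-1)^{R}=\pm1$ accordingly). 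Matching up the four resulting pieces, with signs, then yields the identity.

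The hard part will be the sign-sectors. The wedge $\{\sg(r)=\sg(s)\}$ is not preserved by the non-orthogonal map $R=2s,\ S=r-s$, so reindexing a left-hand summand produces, besides the wanted even- (or odd-) $R$ part of an $f_{K+1,K+1,1}$-sum over its own sector $\{\sg(R)=\sg(S)\}$, a batch of ``boundary'' terms sitting just outside that sector (for each $R$ with $|R|\ge 2$, a short strip of values of $S$, namely $-\tfrac{|R|}{2}\le S\le -1$ or $0\le S\le \tfrac{|R|}{2}-1$). The role of the $\pm$ combination on the left and of the second $f_{K+1,K+1,1}$ on the right is precisely that these boundary contributions reorganize into that second double-sum, with nothing spurious left over (any genuinely residual term lying on a line $x=q^{n}$ and vanishing through $j(q^{n};q)=0$, the mechanism already used in (\ref{equation:f121-calculation})). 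Establishing this reorganization --- and, more prosaically, keeping straight the bookkeeping over the two sign choices and the two parity cosets --- is where essentially all of the work lies; it can be done directly or streamlined by invoking a general dissection lemma for $f_{a,b,c}$ of the kind developed in \cite{HM}.
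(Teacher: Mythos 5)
First, a structural remark: the paper does not prove this statement---Theorem \ref{theorem:main-result} is imported verbatim from \cite[Theorem 1.1]{MPS}---so there is no in-paper argument to compare yours against, and I can only judge the proposal on its own terms. Your preliminary reductions are correct and I verified them: $s(2K-m,\ell,2K)-s(m,\ell,2K)=\tfrac{1}{2}(m-K)$, so via (\ref{equation:gen-fabc-SW}) and (\ref{equation:SW-fabc}) the theorem is indeed equivalent to the displayed identity among double-sums; and the substitution $R=2s$, $S=r-s$ does carry $(K+1)\binom{R}{2}+(K+1)RS+\binom{S}{2}$ to $\binom{r}{2}+(2K+1)rs+\binom{s}{2}-Ks$, with the leftover $-Ks$ exactly absorbed by the exponents, so the even-$R$ coset of $f_{K+1,K+1,1}(q^{1+\frac{1}{2}(K+\ell)},q^{1+\frac{1}{2}(m+\ell)},q)$ reindexes onto the summand of $f_{1,2K+1,1}(q^{1+\frac{1}{2}(m+\ell)},q^{1-\frac{1}{2}(m-\ell)},q)$ taken over the wedge $\{\sg(s)=\sg(r-s)\}$ rather than $\{\sg(r)=\sg(s)\}$.

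The gap is that everything after this point is asserted rather than carried out, and that is where the entire theorem lives. Averaging the two sign choices shows the claim is equivalent to $f_{1,2K+1,1}(x,y,q)=F^{\mathrm{even}}+q^{\frac{1}{2}(K-\ell)}G^{\mathrm{odd}}$ (plus its companion), where $F^{\mathrm{even}}$ is exactly your reindexed wedge $\{\sg(s)=\sg(r-s)\}$; hence the whole content is that the two leftover strips $\{0\le r<s\}$ and $\{s\le r<0\}$, with weights $+1$ and $-1$, reindex precisely onto the odd-$R$ coset of the \emph{second} $f_{K+1,K+1,1}$ together with the prefactor $q^{\frac{1}{2}(K-\ell)}$. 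You never exhibit the change of variables that does this, and the one you name for the odd coset ($R=2s+1$, $S=r-s$) does not work as written: substituted into the second $f_{K+1,K+1,1}$ it yields $r$-exponent $2K+2+\tfrac{1}{2}(m-\ell)$, which matches neither argument of either $f_{1,2K+1,1}$ on the left, so a further affine shift is needed and its effect on the $\sg$-sectors must be retraced. Your fallback---that residual terms die because they sit on $j(q^{n};q)=0$---is not available here: for generic $m,\ell$ the leftover pieces are raw wedge sums, not theta functions, and the correction terms produced by (\ref{equation:f-shift}) are genuinely nonzero theta sums. So the proposal is a plausible plan whose decisive step, the reorganization of the sector-mismatch terms, is missing.
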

\begin{corollary} \cite[Corollary 1.2]{MPS}\label{corollary:f1K1-fKK1-A} We have
{\allowdisplaybreaks \begin{align}
C_{m,K}^{2K}(q)
&=\frac{q^{s(m,K,2K)}}{J_{1}^3}f_{K+1,K+1,1}(q^{K+1},q^{1+\frac{1}{2}(m+K)},q).
\label{equation:OP-2}
\end{align}}%
\end{corollary}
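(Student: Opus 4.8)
The plan is to derive the corollary as a specialization of Theorem~\ref{theorem:main-result}. First I would set $\ell = K$ in the ``$+$'' case of \eqref{equation:OP-split-1}. Observing that the symmetry \eqref{equation:string-symmetry-3} with $N = 2K$, $\ell = K$ gives $C_{m,K}^{2K}(q) = C_{2K-m,K}^{2K}(q)$, the left-hand side of \eqref{equation:OP-split-1} with the ``$+$'' sign becomes $2\,C_{m,K}^{2K}(q)$. Thus I obtain
\begin{equation*}
2\,C_{m,K}^{2K}(q)
=\frac{q^{s(m,K,2K)}}{J_{1}^3}\Big( f_{K+1,K+1,1}(q^{1+K},q^{1+\frac{1}{2}(m+K)},q)
+ q^{\frac{1}{2}(K-K)}f_{K+1,K+1,1}(q^{1+\frac{1}{2}(3K-K)},q^{1+K+\frac{1}{2}(m-K)},q)\Big).
\end{equation*}
With $\ell = K$ the exponents simplify: $1 + \tfrac12(K+\ell) = 1+K$, $1 + \tfrac12(3K-\ell) = 1+K$, $1 + K + \tfrac12(m-\ell) = 1 + \tfrac12(m+K)$, and the prefactor $q^{\frac12(K-\ell)} = 1$. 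So the two $f_{K+1,K+1,1}$ terms on the right coincide, each equal to $f_{K+1,K+1,1}(q^{K+1}, q^{1+\frac12(m+K)}, q)$, and the factor of $2$ cancels, yielding \eqref{equation:OP-2}.

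The only genuine content beyond bookkeeping is the identification of the two Hecke-type double-sums after setting $\ell = K$; everything else is substitution into already-established identities. I would verify the exponent matching carefully, since $a\binom{r}{2} + brs + c\binom{s}{2}$ with $(a,b,c) = (K+1,K+1,1)$ and the two pairs of arguments must agree as functions of $q$ term by term, which is immediate once one checks that the pairs $(1+K,\,1+\tfrac12(m+K))$ and $(1+K,\,1+\tfrac12(m+K))$ are literally equal after the $\ell=K$ substitution. The parity hypotheses carry over: $N = 2K$ is even, $\ell = K \in \{0,1,\dots,2K\}$, and the congruence $m \equiv \ell = K \pmod 2$ is exactly the condition under which the half-integer exponents $1 + \tfrac12(m+K)$ lie in the appropriate lattice for $f_{K+1,K+1,1}$ to be well-defined.

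I expect no real obstacle here — the main (minor) pitfall is sign/exponent bookkeeping in reading off \eqref{equation:OP-split-1} with the correct ``$+$'' choices and confirming that the symmetry \eqref{equation:string-symmetry-3} indeed collapses the left-hand side as claimed. An alternative, slightly longer route would bypass Theorem~\ref{theorem:main-result} and instead start from \eqref{equation:SW-fabc} with $\ell = K$, apply a Hecke-sum splitting identity from \cite{HM} directly to $f_{1,1+2K,1}(q^{1+\frac12(m+K)}, q^{1-\frac12(m-K)}, q)$, and recombine; but since Theorem~\ref{theorem:main-result} already packages precisely that splitting, the specialization argument above is the most economical.
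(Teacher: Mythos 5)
Your proof is correct and is essentially the intended derivation: the statement is presented as a corollary of Theorem~\ref{theorem:main-result}, obtained precisely by setting $\ell=K$ in the ``$+$'' case, using the symmetry \eqref{equation:string-symmetry-3} (which for $N=2K$, $\ell=K$ gives $C_{m,K}^{2K}=C_{2K-m,K}^{2K}$) to collapse the left-hand side to $2C_{m,K}^{2K}$, and observing that the two Hecke-type double-sums on the right coincide since $1+K+\tfrac12(m-K)=1+\tfrac12(m+K)$ and $q^{\frac12(K-\ell)}=1$. All of your exponent bookkeeping checks out.
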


\begin{corollary} \cite[Corollary 1.3]{MPS}\label{corollary:f1K1-fKK1-B} For $K\equiv \ell \pmod 2$, we have
{\allowdisplaybreaks \begin{align}
C_{K,\ell}^{2K}(q)&=\frac{q^{s(K,\ell,2K)}}{J_{1}^3}
f_{K+1,K+1,1}(q^{1+\frac{1}{2}(K+\ell)},q^{1-\frac{1}{2}(K-\ell)},q).
\label{equation:OP-3}
\end{align}}%
\end{corollary}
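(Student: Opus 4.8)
The plan is to deduce Corollary~\ref{corollary:f1K1-fKK1-B} from~(\ref{equation:SW-fabc}) together with one Hecke-type double-sum identity, which is in essence the $m=K$ specialization of Theorem~\ref{theorem:main-result}. By~(\ref{equation:SW-fabc}) with $m=K$, $N=2K$ we already have
\[
C_{K,\ell}^{2K}(q)=q^{s(K,\ell,2K)}\,\mathcal C_{K,\ell}^{2K}(q)=\frac{q^{s(K,\ell,2K)}}{J_1^3}\,f_{1,1+2K,1}\bigl(q^{1+\frac12(K+\ell)},q^{1-\frac12(K-\ell)},q\bigr),
\]
so Corollary~\ref{corollary:f1K1-fKK1-B} is \emph{equivalent} to the bare identity
\[
f_{1,1+2K,1}\bigl(q^{1+\frac12(K+\ell)},q^{1-\frac12(K-\ell)},q\bigr)=f_{K+1,K+1,1}\bigl(q^{1+\frac12(K+\ell)},q^{1-\frac12(K-\ell)},q\bigr).
\]
The hypothesis $K\equiv\ell\pmod 2$ enters exactly here: it makes $\tfrac12(K+\ell)$ and $\tfrac12(K-\ell)$ integers, so both arguments above are integral powers of $q$, which is what will force various theta factors $j(q^{n};q^{\bullet})$ to vanish. (One reaches the same identity from Theorem~\ref{theorem:main-result}: putting $m=K$ there gives $2K-m=K=m$, so $C^{2K}_{2K-m,\ell}=C^{2K}_{K,\ell}$, and the ``$+$'' instance of~(\ref{equation:OP-split-1}) becomes, after the simplifications $1+\tfrac12(m+\ell)=1+\tfrac12(K+\ell)$ and $1+K+\tfrac12(m-\ell)=1+\tfrac12(3K-\ell)$, a relation in which both $f_{K+1,K+1,1}$ terms have coinciding first and second arguments.)

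To prove the displayed $f$-identity I would expand both sides using the double-sum definition in~(\ref{equation:fabc-def}) and reindex. The unimodular substitution $w=s+(K+1)r$ turns the quadratic form $(K+1)\binom r2+(K+1)rs+\binom s2$ of $f_{K+1,K+1,1}$ into the diagonal form $\binom w2-\tfrac12 K(K+1)r^{2}$, while the substitution $(p,d)=(r+s,\,r-s)$ (onto the sublattice $p\equiv d\pmod 2$) turns the form $\binom r2+(1+2K)rs+\binom s2$ of $f_{1,1+2K,1}$ into $\tfrac12\bigl((K+1)p^{2}-p\bigr)-\tfrac12 Kd^{2}$; the two diagonalized forms have the same discriminant. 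One then shows the resulting lattice sums agree: on the interior of the index regions ``$\sg(r)=\sg(s)$'' they match term for term, and the boundary contributions are one-dimensional sums of the shape $j(q^{n};q^{\bullet})$ with $n\in\Z$, hence $0$ since $j(q^{n};q)=0$.

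I expect the main obstacle to be this last reconciliation. Under the two different changes of variable the region ``$\sg(r)=\sg(s)$'' turns into two rather different-looking cones, and one must align them while carrying the weights $\sg(r)(-1)^{r+s}$ correctly and verifying that each residual boundary term really is a theta function evaluated at an integral power of $q$. A cleaner alternative, avoiding the direct comparison, is to run the $f_{1,1+2K,1}$ sum through the splitting and inversion identities for $f_{a,b,c}$ recorded in~\cite{HM}---the same machinery behind~(\ref{equation:f121})---and then check that the Appell--Lerch terms $m(\,\cdot\,,q^{\bullet},\pm1)$ thereby produced cancel for these particular arguments, again because $j(q^{n};q)=0$.
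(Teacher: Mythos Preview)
The paper does not prove this corollary; it is quoted from~\cite{MPS}, so there is no in-paper argument to compare against.  Your reduction via~(\ref{equation:SW-fabc}) to the bare identity
\[
f_{1,1+2K,1}\bigl(q^{1+\frac12(K+\ell)},q^{1-\frac12(K-\ell)},q\bigr)=f_{K+1,K+1,1}\bigl(q^{1+\frac12(K+\ell)},q^{1-\frac12(K-\ell)},q\bigr)
\]
is correct, and the parity hypothesis enters exactly where you say.

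Two cautions.  First, your parenthetical route through Theorem~\ref{theorem:main-result} does not land on~(\ref{equation:OP-3}) as stated: specializing $m=K$ in the ``$+$'' case of~(\ref{equation:OP-split-1}) gives
\[
2C_{K,\ell}^{2K}(q)=\frac{q^{s(K,\ell,2K)}}{J_1^3}\Bigl(f_{K+1,K+1,1}(q^{A},q^{A},q)+q^{\frac12(K-\ell)}f_{K+1,K+1,1}(q^{B},q^{B},q)\Bigr),
\]
with $A=1+\tfrac12(K+\ell)$ and $B=1+\tfrac12(3K-\ell)$.  The second argument in~(\ref{equation:OP-3}) is $q^{1-\frac12(K-\ell)}=q^{A-K}$, not $q^{A}$, so bare specialization of the theorem does \emph{not} reproduce the corollary; one still needs a shift identity such as~(\ref{equation:f-shift}) plus a vanishing-boundary argument to bridge the gap.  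In~\cite{MPS} the corollary presumably comes from applying the theorem's underlying change-of-variables directly to~(\ref{sffinal}) at $m=K$, not from the theorem's statement.

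Second, your principal plan---diagonalize both quadratic forms and match the lattice sums---has the forms computed correctly, but the content is entirely in the region-matching you flag as the obstacle, and the proposal stops short of carrying it out.  The ``cleaner alternative'' via Theorem~\ref{theorem:masterFnp} will indeed kill the Appell--Lerch pieces against $j(q^n;q)=0$, but the surviving theta quotient $\theta_p(x,y,q)$ is not manifestly an $f_{K+1,K+1,1}$, so a further identity is still required.  As written, neither branch is a complete proof.
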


In this paper, we will use the relation (\ref{equation:SW-fabc}), the double-sum formulas of \cite{HM}, and the new string function symmetries of \cite{MPS} to give general string functions identities.  In particular we will prove general string function identities for levels $N=1,2,3,4$:

\begin{theorem}\label{theorem:Level1-theorem} For $\ell \in\{0,1\}$ and $m\equiv \ell\pmod 2$, we have
\begin{equation}
q^{-\frac{1}{4}(m^2-\ell^2)}J_{1}^3\mathcal{C}_{m,\ell}^{1}(q)=J_{1}^2.
\end{equation}
\end{theorem}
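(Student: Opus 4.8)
The plan is to start from the useful form \eqref{equation:SW-fabc}, which in the case $N=1$ reads
\begin{equation*}
\mathcal{C}_{m,\ell}^{1}(q)=\frac{1}{J_1^3}f_{1,2,1}\bigl(q^{1+\frac12(m+\ell)},q^{1-\frac12(m-\ell)},q\bigr),
\end{equation*}
so that after multiplying through by $q^{-\frac14(m^2-\ell^2)}J_1^3$ the theorem is equivalent to the claim
\begin{equation*}
q^{-\frac14(m^2-\ell^2)}f_{1,2,1}\bigl(q^{1+\frac12(m+\ell)},q^{1-\frac12(m-\ell)},q\bigr)=J_1^2.
\end{equation*}
Thus everything reduces to evaluating the single Hecke-type double-sum $f_{1,2,1}$ at these particular arguments. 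First I would substitute $x=q^{1+\frac12(m+\ell)}$ and $y=q^{1-\frac12(m-\ell)}$ into the formula \eqref{equation:f121}, which expresses $f_{1,2,1}(x,y,q)$ as a sum of two Appell--Lerch terms $j(y;q)m(q^2x/y^2,q^3,-1)$ and $j(x;q)m(q^2y/x^2,q^3,-1)$, plus an explicit theta quotient.

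The key observation, exactly as in the model computation \eqref{equation:f121-calculation}, is that with these arguments both $x$ and $y$ are integral powers of $q$ (since $m\equiv\ell\pmod 2$ forces the exponents $1+\frac12(m+\ell)$ and $1-\frac12(m-\ell)$ to be integers), so $j(x;q)=j(y;q)=0$ and the two Appell--Lerch contributions vanish identically. Hence $f_{1,2,1}$ collapses to just the theta-quotient term:
\begin{equation*}
f_{1,2,1}(x,y,q)=-\,\frac{yJ_3^3\,j(-x/y;q)\,j(q^2xy;q^3)}{\overline{J}_{0,3}\,j(-qy^2/x;q^3)\,j(-qx^2/y;q^3)}.
\end{equation*}
Now I would compute each factor in terms of the $J_{a,m}$ notation: with $x/y=q^{\ell}$, $xy=q^{2-\frac12(m-\ell)+\frac12(m+\ell)}=q^{2+\ell}$, $y^2/x=q^{1-\frac32 m+\frac12\ell}$, $x^2/y=q^{1+\frac32 m+\frac12\ell}$, and the prefactor $y=q^{1-\frac12(m-\ell)}$, each $j(\pm q^{\,\cdot};q)$ or $j(\pm q^{\,\cdot};q^3)$ becomes a $\overline J$ of a specific index (one must reduce the exponents modulo the relevant period and pull out the resulting powers of $q$ using the quasi-periodicity $j(q^n x;q)=(-1)^n q^{-\binom n2}x^{-n}j(x;q)$ and its $q^3$-analogue).

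After assembling these pieces and carefully collecting the accumulated powers of $q$, I expect the right-hand side to simplify to $q^{\frac14(m^2-\ell^2)}J_1^2$, which gives the theorem upon multiplying by $q^{-\frac14(m^2-\ell^2)}$; the identity $J_3^3\overline J_{0,1}\overline J_{2,3}/(\overline J_{0,3}\overline J_{?,3}\overline J_{?,3})=J_1^2$ needed here is of the same flavor as the one used at the end of \eqref{equation:f121-calculation}, and can be verified by a standard theta-function manipulation (e.g.\ via the product forms and Euler-type factorizations, or by the $\overline J$-recombination identities). The main obstacle is purely bookkeeping: tracking the exponent of $q$ through the quasi-periodicity shifts of the four theta factors — since $m$ and $\ell$ enter with half-integer-looking coefficients that only become integral after using $m\equiv\ell\pmod 2$, one has to be careful about parity (it may be cleanest to split into the cases $\ell=0$ and $\ell=1$, as in the statement, and within each case handle $m$ even or odd) to be sure the final power of $q$ is exactly $\frac14(m^2-\ell^2)$ and that no sign errors creep in from the $(-1)^n$ factors.
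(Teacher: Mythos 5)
Your core mechanism is exactly the paper's: feed the $N=1$ case of (\ref{equation:SW-fabc}) into (\ref{equation:f121}), observe that $m\equiv\ell\pmod 2$ makes $x$ and $y$ integral powers of $q$, so both Appell--Lerch terms are killed by their vanishing theta coefficients $j(x;q)=j(y;q)=0$ (the $m(\cdot,q^3,-1)$ values themselves being finite), and evaluate the surviving theta quotient. That part is sound. The gap is that the proof stops exactly where the work lies: you assert, rather than verify, that the theta quotient equals $q^{\frac14(m^2-\ell^2)}J_1^2$ for general $m$, and your preparatory bookkeeping already contains a slip that would propagate into the exponent count --- $x/y=q^{m}$, not $q^{\ell}$ (it is $xy$ that equals $q^{2+\ell}$). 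The general-$m$ computation does go through: after applying (\ref{equation:j-elliptic}) to each factor, every case collapses to the same quotient $J_3^3\overline{J}_{0,1}J_{1,3}/(\overline{J}_{0,3}\overline{J}_{2,3}^2)=J_1^2$ times a power of $q$; but as written you have not carried this out, and the whole content of the theorem sits in that unperformed verification.

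The missing idea that makes all of this unnecessary is Lemma \ref{lemma:masterLemma} (equivalently the symmetries (\ref{equation:string-symmetry-1})--(\ref{equation:string-symmetry-3})): the quantity $q^{-\frac14(m^2-\ell^2)}\mathcal{C}^{1}_{m,\ell}(q)$ is invariant under $m\mapsto -m$, $m\mapsto 2-m$, and $(m,\ell)\mapsto(1-m,1-\ell)$, so the theorem reduces to the single tuple $(\ell,m)=(0,0)$, i.e.\ to the one evaluation $f_{1,2,1}(q,q,q)=J_1^2$, which is literally the computation (\ref{equation:f121-calculation}). This is what the paper does. I would either invoke that lemma, or, if you prefer the direct route, actually perform the exponent bookkeeping for general $(m,\ell)$ with the corrected value of $x/y$.
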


\begin{theorem}\label{theorem:Level2-theorem}For $\ell \in\{0,1,2\}$, $0\le m < 4$, $m\equiv \ell\pmod 2$, we have
\begin{equation}
q^{-\frac{1}{8}(m^2-\ell^2)}J_{1}^3\mathcal{C}_{m,\ell}^{2}(q)
=
\begin{cases}
J_{1,2}\overline{J}_{3,8}
&\textup{if} \ \ell =0, m=0, \\
q^{\frac{1}{2}}J_{1,2}\overline{J}_{1,8}
&\textup{if} \ \ell =0, m=2, \\
J_{1}J_{2}
&\textup{if} \ \ell =1, m=1,3,\\
q^{\frac{1}{2}}J_{1,2}\overline{J}_{1,8}
&\textup{if} \ \ell =2, m= 0, \\
J_{1,2}\overline{J}_{3,8}
&\textup{if} \ \ell =2, m=2.
\end{cases}\label{equation:LevelN2-general}
\end{equation}
\end{theorem}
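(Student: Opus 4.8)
The plan is to use the representation \eqref{equation:SW-fabc}, namely $J_1^3\mathcal{C}_{m,\ell}^{2}(q) = f_{1,3,1}(q^{1+\frac12(m+\ell)}, q^{1-\frac12(m-\ell)}, q)$, and reduce each case to a Hecke-type double-sum $f_{a,b,c}$ that has an explicit evaluation via the formulas of \cite{HM} of the type displayed in \eqref{equation:f121}. Since $N=2$ means $b = 1+N = 3$, the relevant family is $f_{1,3,1}$, and I expect \cite{HM} contains (or one can derive by the same methods) a decomposition of $f_{1,3,1}(x,y,q)$ into Appell--Lerch functions $m(\cdot,\cdot,\cdot)$ plus a theta quotient. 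For the cases where the answer is a single theta product (all five cases here), the Appell--Lerch contributions should vanish — exactly as in the computation \eqref{equation:f121-calculation}, where $m(q,q^3,-1)$-type terms die because their first argument forces a $j(q^n;q)=0$ in a numerator, or because of an internal cancellation — leaving only the theta quotient, which then simplifies to the stated product.

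The key steps, in order, are as follows. First, for each of the five $(\ell,m)$ pairs, substitute the specific values $x = q^{1+\frac12(m+\ell)}$, $y = q^{1-\frac12(m-\ell)}$ into $f_{1,3,1}(x,y,q)$; note that the prefactor $q^{-\frac18(m^2-\ell^2)}$ in the theorem is precisely $q^{s(m,\ell,2)}$ up to the universal constant, matching \eqref{equation:gen-fabc-SW}, so tracking the $q$-power bookkeeping is routine. Second, invoke the $f_{1,3,1}$ decomposition from \cite{HM}; substitute the specialized arguments and observe that each Appell--Lerch term $m(x',q^{b'},z')$ has $x'$ an integral power of $q^{b'}$ (or the defining $j$ in front vanishes), so those terms are $0$ — this is the direct analogue of the "$0+0+J_1^2$" phenomenon. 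Third, simplify the surviving theta quotient: expand every $j(q^a;q^m)=J_{a,m}$, use elementary theta identities (the quasi-periodicity $j(q^n x;q) = (-1)^n q^{-\binom n2} x^{-n} j(x;q)$, the reflection $j(x;q)=j(q/x;q)=-x\,j(q/x;q)$... more precisely $j(x;q) = -x j(x^{-1}q;q)$, and product rearrangements like $J_{1}J_2 = J_{1,2}\overline{J}_{1,2}\cdots$ as needed) together with the level-$2$ dissection of $J_1^3$, to collapse the quotient to the claimed RHS. Fourth, cross-check consistency with the symmetries \eqref{equation:string-symmetry-3}: the pairs $(\ell,m)=(0,0)\leftrightarrow(2,2)$ and $(0,2)\leftrightarrow(2,1)$... indeed $(N-m,N-\ell)$ sends $(0,0)\mapsto(2,2)$ and $(2,0)\mapsto(0,2)$, so the matching of $J_{1,2}\overline J_{3,8}$ in the first and last cases, and of $q^{1/2}J_{1,2}\overline J_{1,8}$ in the second and fourth, is forced — a useful sanity check that reduces the real work to three genuinely distinct evaluations ($(\ell,m)=(0,0)$, $(0,2)$, and $(1,1)$).

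The main obstacle I anticipate is the third step: extracting the correct theta product from the Appell--Lerch-free remainder. The theta quotient coming out of a formula like \eqref{equation:f121} is a ratio of $j$'s at various arguments and moduli ($q$, $q^3$, and after specialization possibly $q^4$ or $q^8$), and turning it into the clean forms $J_{1,2}\overline J_{3,8}$, $q^{1/2}J_{1,2}\overline J_{1,8}$, $J_1 J_2$ requires a careful sequence of modular/quasi-periodicity manipulations, with the half-integer $q$-powers needing exact tracking against the $q^{-\frac18(m^2-\ell^2)}$ normalization. A secondary subtlety is choosing, for each case, which of the several \cite{HM} double-sum identities for $b=3$ gives the cleanest specialization (there may be more than one applicable formula, related by the $f_{a,b,c}$ symmetries $f_{a,b,c}(x,y,q)=f_{c,b,a}(y,x,q)$ and the $\mathbb{Z}$-translation rules), and verifying in each that the Appell--Lerch arguments really do degenerate. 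Once the right identity is selected, the vanishing of the mock pieces should be immediate, and what remains is bookkeeping with Jacobi theta functions of the kind already illustrated in \eqref{equation:f121-calculation}.
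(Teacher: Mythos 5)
Your proposal matches the paper's proof in all essentials: it uses \eqref{equation:SW-fabc} to reduce each case to an evaluation of $f_{1,3,1}$, cuts the work down to three representative $(\ell,m)$-tuples via the symmetries of Lemma \ref{lemma:masterLemma}, invokes the \cite{HM} decomposition of $f_{1,3,1}$ into Appell--Lerch functions plus a theta quotient (Theorem \ref{theorem:genfn2}, together with Proposition \ref{proposition:prop-singshift} to shift $z$ so that $m(\cdot,q^8,z)$ stays well-defined when $y/x$ is a power of $q$ --- the one technical point you flag but leave open), kills the mock pieces because their coefficients are $j(q;q)=j(q^2;q)=0$, and finishes with product rearrangements. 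This is exactly the paper's argument (Proposition \ref{proposition:f131-evaluations} and the proof of Theorem \ref{theorem:Level2-theorem}), so only the routine theta bookkeeping remains to be written out.
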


\begin{theorem} \label{theorem:Level3-theorem} For $\ell \in\{0,1,2, 3\}$, $0\le m < 6$, $m\equiv \ell \pmod 2$, we have
\begin{equation}
q^{-\frac{1}{12}(m^2-\ell^2)}J_{1}^3\mathcal{C}_{m,\ell}^{3}(q)
=
\begin{cases}
\theta_{0}(q)
&\textup{if} \ \ell =0, m=0, \\
\theta_{3}(q)
&\textup{if} \ \ell =0, m=2, 4,\\
\theta_{1}(q)
&\textup{if} \ \ell =1, m= 1, 5,\\
\theta_{2}(q)
&\textup{if} \ \ell =1, m= 3, \\
\theta_{2}(q)
&\textup{if} \ \ell =2, m= 0,\\
\theta_{1}(q)
&\textup{if} \ \ell =2, m= 2, 4,\\
\theta_{3}(q)
&\textup{if} \ \ell =3, m=1,5, \\
\theta_{0}(q)
&\textup{if} \ \ell =3, m=3,
\end{cases}
\label{equation:LevelN3-general}
\end{equation}
where
\begin{equation}
\theta_{i}(q):=
\begin{cases}
J_{1}\cdot ( J_{8,15}-qJ_{2,15})& \textup{for} \ i=0,\\
J_{1}J_{6,15}& \textup{for} \ i=1,\\
q^{\frac{1}{3}}J_{1}\cdot ( J_{11,15}+qJ_{1,15})& \textup{for} \ i=2,\\
q^{\frac{2}{3}}J_{1}J_{3,15}& \textup{for} \ i=3.
\end{cases}
\end{equation}
\end{theorem}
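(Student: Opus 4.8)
\textbf{Proof proposal for Theorem~\ref{theorem:Level3-theorem}.}
The plan is to reduce each of the eight cases to an evaluation of a Hecke-type double-sum via the representation~(\ref{equation:SW-fabc}), and then to apply the appropriate specialization of the double-sum formulas from~\cite{HM}, using the string-function symmetries~(\ref{equation:string-symmetry-1})--(\ref{equation:string-symmetry-3}) to cut the number of genuinely distinct computations down to essentially one per value of $\theta_i$. First I would record, using~(\ref{equation:string-symmetry-1}) and the fact that $N=3$ forces $2N-m = 6-m$, that the congruence constraint $m\equiv\ell\pmod 2$ together with $0\le m<6$ leaves only a handful of residues of $m$ to treat directly; symmetry~(\ref{equation:string-symmetry-3}), namely $C_{m,\ell}^{3}=C_{3-m,3-\ell}^{3}$, pairs $\ell=0$ with $\ell=3$ and $\ell=1$ with $\ell=2$, which already matches the block structure visible on the right-hand side of~(\ref{equation:LevelN3-general}) (each $\theta_i$ appears exactly twice). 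Thus it suffices to evaluate, say, $\mathcal C_{0,0}^{3}$, $\mathcal C_{2,0}^{3}$, $\mathcal C_{1,1}^{3}$, and $\mathcal C_{3,1}^{3}$ from scratch, and deduce the rest.

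For each of these I would substitute $N=3$ into~(\ref{equation:SW-fabc}) to get $\mathcal C_{m,\ell}^{3}(q) = J_1^{-3} f_{1,4,1}(q^{1+\frac12(m+\ell)}, q^{1-\frac12(m-\ell)}, q)$, and then invoke the $\cite{HM}$ formula expressing $f_{1,4,1}(x,y,q)$ (the ``$b=4$'' companion of~(\ref{equation:f121})) as a sum of Appell--Lerch terms $m(\cdot, q^{\,?}, z)$ plus a pure theta quotient; the modulus in the $f_{a,b,c}$ theory is governed by $b^2-ac$, here $16-1=15$, which is exactly why level $15$ theta functions $J_{a,15}$ appear in $\theta_i$. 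After specializing $x,y$ to the relevant powers of $q$, the key point—exactly as in~(\ref{equation:f121-calculation})—is that the arguments of the Appell--Lerch functions $m(x,q,z)$ become integral powers of $q$ in a way that forces $j(z;q)=0$ in a \emph{numerator} (or makes $m(x,q,z)$ collapse via its known elementary specializations), so that all mock contributions vanish and only the theta quotient survives. That surviving quotient must then be simplified, using Jacobi's triple product, the elementary identities $j(q^n x;q)=(-1)^n q^{-\binom n2} x^{-n} j(x;q)$ and $j(x;q)=j(q/x;q)$, and the substitution $q\mapsto q$, $J_m = J_{m,3m}$, into the stated closed forms $J_1(J_{8,15}-qJ_{2,15})$, $J_1 J_{6,15}$, etc.

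The main obstacle I anticipate is \emph{not} the conceptual structure but the bookkeeping in two places. First, one must pick the correct member of the $\cite{HM}$ family: the symmetric form~(\ref{equation:f121}) is for $f_{1,2,1}$, whereas here $b=1+N=4$, so I would use the general $f_{a,b,c}$ theta-decomposition of~\cite{HM} (which splits $f_{a,b,c}$ into $\gcd$-many sub-sums and applies the $f_{1,b',1}$-type identity to each), and verify that with $a=c=1$, $b=4$ the decomposition has the clean shape needed. Second, and more delicately, after specialization several of the theta factors in the denominator of the $\cite{HM}$ formula will themselves vanish, so the ``$0+0+\text{theta quotient}$'' bookkeeping of~(\ref{equation:f121-calculation}) is too naive: one must instead take a limiting form of the identity (equivalently, use the version of the $\cite{HM}$ formula already pre-simplified for such degenerate specializations, or clear the vanishing factors by an L'Hôpital-type argument on theta functions). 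Handling this degeneration correctly in each of the four base cases is where the real work lies. Once the four base evaluations are in hand, Theorem~\ref{theorem:Level3-theorem} follows by reading off the symmetry dictionary: $\ell=0\leftrightarrow\ell=3$ and $\ell=1\leftrightarrow\ell=2$ via~(\ref{equation:string-symmetry-3}), and $m\leftrightarrow 6-m$ via~(\ref{equation:string-symmetry-1})--(\ref{equation:string-symmetry-2}), together with tracking the prefactor $q^{-\frac1{12}(m^2-\ell^2)}$ against $q^{s(m,\ell,3)}$ from~(\ref{equation:gen-fabc-SW})--(\ref{equation:s-def}).
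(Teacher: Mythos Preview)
Your proposal is correct and follows essentially the same route as the paper: reduce via the symmetries to the four base cases $(\ell,m)\in\{(0,0),(1,1),(2,0),(3,1)\}$, plug into~(\ref{equation:SW-fabc}) to get $f_{1,4,1}$ at powers of $q$, and then apply the $b=4$ Hecke-sum formula from~\cite{HM} (Theorem~\ref{theorem:genfn3} here), after which the Appell--Lerch part dies and only a theta quotient remains.

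Two small corrections to your anticipated mechanics. First, the vanishing of the mock piece comes from the \emph{coefficients} $j(x;q)$ and $j(y;q)$ in the expression~(\ref{equation:mdef-2}) for $g_{1,4,1}$, not from $j(z;q)$: since $x$ and $y$ are integral powers of $q$, both theta coefficients are zero outright. Second, your expected L'H\^opital-type degeneration in the theta-quotient denominator does not actually occur at level~$3$; the genuine degeneration is that with the naive choice $z_1=y/x$, $z_0=x/y$ one can get $z$ equal to an integral power of $q^{15}$, making $m(\cdot,q^{15},z)$ undefined. The paper handles this not by a limit but by invoking Proposition~\ref{proposition:prop-singshift} (i.e.\ \cite[Proposition~8.1]{HM}), which allows one to replace $z$ by $q^{3k}z$ for a well-chosen integer $k$ at the cost of a harmless shift in the theta quotient $\Theta_{1,3}$; one then picks $k\in\{0,1,2\}$ case by case so that everything is simultaneously well-defined and the $j(x;q)$, $j(y;q)$ coefficients still kill the Appell--Lerch terms. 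With that device in hand the four evaluations~(\ref{equation:f141-evaluation-0})--(\ref{equation:f141-evaluation-3}) are routine product rearrangements, and no limiting argument is needed.
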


\begin{theorem} \label{theorem:Level4-theorem} For $\ell \in\{0,1,2, 3, 4\}$, $0\le m < 8$, and $m\equiv \ell \pmod 2$, we have
\begin{equation}
q^{-\frac{1}{16}(m^2-\ell^2)}J_{1}^3\mathcal{C}_{m,\ell}^{4}(q)
=
\begin{cases}
\theta_{0}(q)
&\textup{if} \ \ell =0, m= 0,\\
\theta_{2}(q)
&\textup{if} \ \ell =0, m= 2,6,\\
\theta_{1}(q)
&\textup{if} \ \ell =0, m= 4,\\
\theta_{3}(q)
&\textup{if} \ \ell =1, m= 1,7,\\
\theta_{4}(q)
&\textup{if} \ \ell =1, m=3,5,\\
\theta_{5}(q)
&\textup{if} \ \ell =2, m=0,4,\\
\theta_{6}(q)
&\textup{if} \ \ell =2, m= 2,6, \\
\theta_{4}(q)
&\textup{if} \ \ell =3, m=1,7,\\
\theta_{3}(q)
&\textup{if} \ \ell =3, m=3,5,\\
\theta_{1}(q)
&\textup{if} \ \ell =4, m= 0,\\
\theta_{2}(q)
&\textup{if} \ \ell =4, m= 2,6, \\
\theta_{0}(q)
&\textup{if} \ \ell =4, m=4,
\end{cases}
\label{equation:LevelN4-general}
\end{equation}
where
\begin{equation}
\theta_{i}(q):=
\begin{cases}
\frac{1}{2}\cdot (J_{1}\overline{J}_{3,6}+J_{1}J_{1,2})& \textup{for} \ i=0,\\
\frac{1}{2}\cdot (J_{1}\overline{J}_{3,6}-J_{1}J_{1,2})& \textup{for} \ i=1,\\
q^{\tfrac{3}{4}}J_{1}\overline{J}_{6,24}& \textup{for} \ i=2,\\
J_{1}\overline{J}_{3,8}& \textup{for} \ i=3,\\
q^{\tfrac{1}{2}}J_{1}\overline{J}_{1,8}& \textup{for} \ i=4,\\
q^{\tfrac{1}{4}}J_{1}\overline{J}_{1,6}& \textup{for} \ i=5,\\
J_{1,4}J_{6,12}& \textup{for} \ i=6.
\end{cases}
\end{equation}
\end{theorem}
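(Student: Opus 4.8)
The plan is to derive each of the twelve cases of \eqref{equation:LevelN4-general} by specializing the Hecke-type double-sum representation \eqref{equation:SW-fabc} and then applying the level-$2K$ symmetry results of \cite{MPS} (Theorem~\ref{theorem:main-result} and its corollaries) with $K=2$, followed by the $f_{a,b,c}$-to-theta formulas of \cite{HM}. First I would use the string-function symmetries \eqref{equation:string-symmetry-1}--\eqref{equation:string-symmetry-3} to cut the list of pairs $(m,\ell)$ down to a set of representatives: symmetry \eqref{equation:string-symmetry-1} identifies $\pm m$, \eqref{equation:string-symmetry-2} identifies $m$ with $8-m$, and \eqref{equation:string-symmetry-3} swaps $(m,\ell)\leftrightarrow(4-m,4-\ell)$; this is exactly why the cases come in the stated groupings (e.g. $m=2$ and $m=6$ always give the same $\theta_i$, and the $\ell=4$ cases mirror the $\ell=0$ cases). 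So it suffices to treat one representative per $\theta_i$, say $(\ell,m)\in\{(0,0),(0,4),(2,0),(2,2),(1,1),(1,3)\}$, plus verifying that the claimed symmetry-pairings are consistent.

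For the "diagonal-type" cases I would invoke Corollaries~\ref{corollary:f1K1-fKK1-A} and \ref{corollary:f1K1-fKK1-B} with $K=2$, which express $C_{m,2}^{4}$ and $C_{2,\ell}^{4}$ directly as a single $f_{3,3,1}$ with explicit arguments; for the remaining cases I would use the $\pm$ split \eqref{equation:OP-split-1} with $K=2$, which writes $C_{m,\ell}^{4}(q)\pm C_{8-m,\ell}^{4}(q)$ as a sum of two $f_{3,3,1}$'s. Combining the $+$ and $-$ splits recovers the individual $C_{m,\ell}^{4}$ and $C_{8-m,\ell}^{4}$; this is the source of the $\tfrac12(J_1\overline J_{3,6}\pm J_1 J_{1,2})$ shapes in $\theta_0,\theta_1$. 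The analytic heart of the argument is then to evaluate the specific double-sums $f_{3,3,1}(x,y,q)$ that arise. For this I would apply the general reduction theorem of \cite{HM} that expresses $f_{a,b,c}$ with $b^2>ac$ (here $9>3$) as a sum of Appell--Lerch functions $m(x,q,z)$ and a theta quotient; after substituting the monomial arguments $x=\pm q^{\cdots}$, $y=q^{\cdots}$, the Appell--Lerch pieces should collapse because their first arguments become integral powers of the relevant modulus (so the $j(z;\cdot)$ in the denominator does not blow up but the numerator vanishes, exactly as in the model computation \eqref{equation:f121-calculation}), leaving a pure theta-function expression. Simplifying that expression to the claimed $\theta_i(q)$ is then a matter of standard theta-function manipulations: the product rule $j(x;q)=(x)_\infty(q/x)_\infty(q)_\infty$, the quasi-periodicity $j(q^nx;q)=(-1)^nq^{-\binom n2}x^{-n}j(x;q)$, and the multiplication/dissection identities relating $J_{a,m}$ at different moduli (e.g. writing $J_{1}$ in terms of $J_{a,8}$, $J_{a,6}$, $J_{a,24}$, $J_{a,12}$).

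A useful internal consistency check at each stage is the leading exponent: the prefactor $q^{-\frac1{16}(m^2-\ell^2)}$ together with $q^{s(m,\ell,4)}$ from \eqref{equation:gen-fabc-SW} must match the lowest power of $q$ appearing in the claimed $\theta_i$ (the explicit $q^{3/4},q^{1/2},q^{1/4}$ factors), and the classical fact \cite{KP} that $\eta^3 c_\lambda^\Lambda$ has a genuine $q$-series expansion forces these to be compatible; I would use this to fix any ambiguous sign or rational-power bookkeeping. The main obstacle I anticipate is purely computational bulk rather than conceptual: there are many cases, each requiring a different monomial specialization of a \cite{HM} formula and a nontrivial theta-identity simplification, and one must be careful that the vanishing of the Appell--Lerch terms really does occur for every specialization (in the cases where it does not vanish outright, one expects a cancellation between the two $m(x,q,z)$'s coming from the symmetric roles of $x$ and $y$, as already visible in \eqref{equation:f121-calculation}). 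Organizing the proof as: (i) symmetry reduction, (ii) a single lemma extracting the needed specializations of the \cite{HM} double-sum formula for $f_{3,3,1}$, and (iii) a case-by-case theta-simplification table, should keep the argument manageable; the level $N=1,2,3$ theorems are the same scheme with $f_{1,N+1,1}$ and serve as warm-ups that make the $N=4$ bookkeeping routine.
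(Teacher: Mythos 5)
Your overall strategy coincides with the paper's: reduce to representatives via Lemma \ref{lemma:masterLemma}, use the $\pm$ splits of \cite{MPS} with $K=2$ to produce $f_{3,3,1}$'s (whence the $\tfrac12(J_1\overline{J}_{3,6}\pm J_1J_{1,2})$ shapes for $\theta_0,\theta_1$ and, at base $q^2$, the $\theta_3,\theta_4$ pair), and then evaluate via the \cite{HM} expansions and theta rearrangements. Two bookkeeping remarks: your list of representatives omits $(\ell,m)=(0,2)$, which is the $\theta_2$ case; and the paper actually treats $\theta_2$ and $\theta_5$ not by the splits but directly from \eqref{equation:SW-fabc} as $f_{1,5,1}(q^2,1,q)$ and $f_{1,5,1}(q^2,q^2,q)$ via Theorem \ref{theorem:genfn4}, after first applying the shift \eqref{equation:f-shift} to move these non-generic arguments into generic position --- a step your outline does not anticipate but which is forced because the \cite{HM} formulas are stated for generic $x,y$.

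The genuine gap is your assertion that in every specialization the Appell--Lerch pieces collapse because a theta coefficient vanishes, or at worst cancel pairwise as in \eqref{equation:f121-calculation}. This fails for the $\theta_6$ case, $f_{3,3,1}(q^3,q,q)$ (identity \eqref{equation:N4-Hecke-evaluation-3}). There the relevant Appell--Lerch term is $j(x^3;q^3)\,m(-q^2x^{-2},q^2,-1)$ with $x\to q$, and $m(-1,q^2,-1)$ is not defined: the definition \eqref{equation:mxqz-def} requires that $xz$ not be an integral power of $q$, whereas here $xz=1$. One faces a $0\cdot\infty$ indeterminacy rather than a vanishing, and simultaneously one term of the theta quotient in Corollary \ref{corollary:f331-expansion} degenerates to $0/0$. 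The paper resolves this by computing $\lim_{x\to q}f_{3,3,1}(x^3,x,q)$, first using \eqref{equation:changing-z-theorem} to move $z=-1$ to a generic $z$; the Appell--Lerch part then contributes the \emph{nonzero} quantity $\tfrac32 J_3^3/\overline{J}_{0,2}$, the singular theta term contributes $-\tfrac12 J_3^3/\overline{J}_{0,2}$ plus a regular piece, and only the combination reduces to $J_{1,4}J_{6,12}$ by way of the auxiliary Lemma \ref{lemma:N4-theta-evaluation}. Without this limit argument your plan cannot produce the $\theta_6$ entry, so you should add it (and check each remaining specialization for the same degeneracy) before the case-by-case table can be considered complete.
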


We then demonstrate how the general string function identities give as special cases examples found in \cite[p. 220]{KP}:

\smallskip
\noindent Level 2:
{\allowdisplaybreaks \begin{subequations}
\begin{gather}
c_{20}^{20}-c_{02}^{20}=\eta(\tau)^{-2}\eta(\tau/2).
\label{equation:KP-2-List2-A}
\end{gather}
\end{subequations}}%
Level 3:
{\allowdisplaybreaks \begin{subequations}
\begin{gather}
c_{12}^{30}=\eta(\tau)^{-2}q^{27/40}\prod_{\substack{n\ge 1\\ n \not \equiv \pm 2 \pmod {5}}}(1-q^{3n}),
\label{equation:KP-3-List2-A}\\
c_{30}^{30}-c_{12}^{30}=\eta(\tau)^{-2}q^{1/120}\prod_{\substack{n\ge 1\\ n \not \equiv \pm 1 \pmod {5}}}(1-q^{n/3}),
\label{equation:KP-3-List2-B}\\
c_{21}^{21}-c_{03}^{21}=\eta(\tau)^{-2}q^{3/40}\prod_{\substack{n\ge 1\\ n \not \equiv \pm 2 \pmod {5}}}(1-q^{n/3}).
\label{equation:KP-3-List2-C}
\end{gather}
\end{subequations}}%

\noindent Level 4:
{\allowdisplaybreaks \begin{subequations}
\begin{gather}
c_{40}^{40}-2c_{22}^{40}+c_{04}^{40}+2c_{04}^{22}-2c_{22}^{22}
=\eta(\tau)^{-2}\eta(\tau/6)^{-1}\eta(\tau/12)^{2}.
\label{equation:KP-4-List2-B}
\end{gather}
\end{subequations}}%

As an interesting consequence, once one has Theorems \ref{theorem:Level1-theorem}, \ref{theorem:Level2-theorem}, \ref{theorem:Level3-theorem}, \ref{theorem:Level4-theorem} in mind, identities such as (\ref{equation:KP-2-List2-A}), (\ref{equation:KP-3-List2-B}), (\ref{equation:KP-3-List2-C}), and (\ref{equation:KP-4-List2-B}) become special cases of the classic theta function identity:
\begin{equation}
j(z;q)=\sum_{k=0}^{m-1}(-1)^k q^{\binom{k}{2}}z^k
j\big ((-1)^{m+1}q^{\binom{m}{2}+mk}z^m;q^{m^2}\big )\label{equation:j-split}.
\end{equation}
In particular identities (\ref{equation:KP-2-List2-A}), (\ref{equation:KP-3-List2-B}), (\ref{equation:KP-3-List2-C}), and (\ref{equation:KP-4-List2-B}) follow from the specializations $m=2,3,3,12$ respectively.

In Section \ref{section:prelim}, we recall background information on theta functions, Appell--Lerch functions, and Hecke-type double-sums.   In Section \ref{section:LevelN1-general} we present a proof of Theorem \ref{theorem:Level1-theorem}.  The proof is a corrected version of a sketch found in \cite[Example 1.3]{HM}.
In Section \ref{section:LevelN2-general} we present a new proof of Theorem \ref{theorem:Level2-theorem}.   In Section \ref{section:LevelN3-general} we present a new proof of Theorem \ref{theorem:Level3-theorem}.   In Section \ref{section:LevelN4-general} we present a new proof of Theorem \ref{theorem:Level4-theorem}.   In Section \ref{section:LevelN2-KP}, we use (\ref{equation:LevelN2-general}) to prove the level $N=2$ identities. In Section \ref{section:LevelN3-KP}, we use (\ref{equation:LevelN3-general}) to prove the level $N=3$ identities.  In Section \ref{section:LevelN4-KP}, we use (\ref{equation:LevelN4-general}) to prove the level $N=4$ identities.

Although there are general formulas for level $N$ string functions, see for example \cite{LP} and \cite[(6.5), (6.6)]{SW}, we emphasize that our methods here are new.  In particular, Kac and Peterson appeal to modularity to prove the string function identities  \cite[p. 220]{KP}.  They employ the transformation law for string functions under the full modular group, calculate the first few terms in the Fourier expansions of the string functions, and exploit the fact that a modular form vanishing at cusps to sufficiently high order is zero.  In the present paper, we use the relation (\ref{equation:SW-fabc}), the double-sum formulas of \cite{HM}.

We point out that the formula for $N=1$ is well-known, and the formula for $N=2$ is related to the Ising model in statistical mechanics.  Our formulations for $N=3$ and $N=4$ appear to be new; however, some of the pieces can be found in \cite[pp. 219-220]{KP}.

\section{Preliminaries}\label{section:prelim}

\subsection{Theta functions}
We collect some frequently encountered product rearrangements:
\begin{subequations}
\begin{gather}
\overline{J}_{0,1}=2\overline{J}_{1,4}=\frac{2J_2^2}{J_1},  \overline{J}_{1,2}=\frac{J_2^5}{J_1^2J_4^2},   J_{1,2}=\frac{J_1^2}{J_2},   \overline{J}_{1,3}=\frac{J_2J_3^2}{J_1J_6},\notag \\
 J_{1,4}=\frac{J_1J_4}{J_2},   J_{1,6}=\frac{J_1J_6^2}{J_2J_3},   \overline{J}_{1,6}=\frac{J_2^2J_3J_{12}}{J_1J_4J_6}.\notag
\end{gather}
\end{subequations}
Following from the definitions are the following general identities:
{\allowdisplaybreaks \begin{subequations}
\begin{gather}
j(q^n x;q)=(-1)^nq^{-\binom{n}{2}}x^{-n}j(x;q), \ \ n\in\mathbb{Z},\label{equation:j-elliptic}\\
j(x;q)=j(q/x;q)\label{equation:1.7},\\
j(x;q)={J_1}j(x;q^n)j(qx;q^n)\cdots j(q^{n-1}x;q^n)/{J_n^n} \ \ {\text{if $n\ge 1$,}}\label{equation:1.10}\\
j(x^n;q^n)={J_n}j(x;q)j(\zeta_nx;q)\cdots j(\zeta_n^{n-1}x;q^n)/{J_1^n},  \label{equation:1.12}
\end{gather}
\end{subequations}}%
if $n\ge 1$, $\zeta_n$ is a primitive $n$-th root of unity.

 A convenient form of the Weierstrass three-term relation for theta functions is,
\begin{proposition}\label{proposition:Weierstrass} For generic $a,b,c,d\in \mathbb{C}^*$
\begin{align*}
j(ac;q)&j(a/c;q)j(bd;q)j(b/d;q)\\
&=j(ad;q)j(a/d;q)j(bc;q)j(b/c;q)+b/c \cdot j(ab;q)j(a/b;q)j(cd;q)j(c/d;q).
\end{align*}
\end{proposition}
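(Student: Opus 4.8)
The final statement is the Weierstrass three-term relation (Proposition~\ref{proposition:Weierstrass}), so the plan is to prove this classical identity for the theta function $j(x;q)=(x)_\infty(q/x)_\infty(q)_\infty$. My approach would be the standard elliptic-function argument: fix $q$ with $|q|<1$ and $a,b\in\C^*$, and regard both sides as functions of the single variable $c$ (with $d$ a parameter, also in $\C^*$). Using the product form of $j$, one checks that $c\mapsto j(ac;q)j(a/c;q)$ is holomorphic on $\C^*$ with no zeros other than the obvious ones forced by the factor structure; more importantly, by the ellipticity relation (\ref{equation:j-elliptic}), namely $j(q^nx;q)=(-1)^nq^{-\binom{n}{2}}x^{-n}j(x;q)$, each of the three products $j(ac;q)j(a/c;q)$, $j(bc;q)j(b/c;q)$, $j(cd;q)j(c/d;q)$ transforms in exactly the same way under $c\mapsto qc$, picking up the common factor $q^{-1}c^{-2}a^{-2}$ (the $a$-dependence drops after one divides through). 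Hence the ratio $F(c):=\big[\text{RHS}\big]/\big[j(ac;q)j(a/c;q)\big]$ is a $q$-elliptic function of $c$, i.e.\ invariant under $c\mapsto qc$, and holomorphic away from the zeros of the denominator.

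First I would make the bookkeeping precise: divide the claimed identity by $j(ac;q)j(a/c;q)$ and set
\[
F(c):=\frac{j(ad;q)j(a/d;q)j(bc;q)j(b/c;q)}{j(ac;q)j(a/c;q)}
+\frac{b}{c}\cdot\frac{j(ab;q)j(a/b;q)j(cd;q)j(c/d;q)}{j(ac;q)j(a/c;q)},
\]
and show $F(c)=j(bd;q)j(b/d;q)$ identically in $c$. The function $F$ is meromorphic on $\C^*$, invariant under $c\mapsto qc$ (so descends to the torus $\C^*/q^{\Z}$), and its only possible poles are at the zeros of $j(ac;q)j(a/c;q)$, which on a fundamental annulus are simple and located at $c=a$ and $c=a^{-1}$ (equivalently $ac\in q^{\Z}$ or $a/c\in q^{\Z}$). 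A meromorphic function on a compact torus with at most two simple poles is determined by its principal parts; I would therefore compute the residue of $F$ at $c=a$. In the first term the numerator $j(bc;q)j(b/c;q)$ evaluated at $c=a$ gives $j(ab;q)j(b/a;q)$; in the second term $j(cd;q)j(c/d;q)$ at $c=a$ gives $j(ad;q)j(a/d;q)$. The simple pole comes from $j(a/c;q)$ vanishing at $c=a$; writing $j(a/c;q)\approx (1-a/c)\cdot(\text{nonzero})$ near $c=a$ and using $j(a/b;q)=(-1)j(b/a;q)\cdot(b/a)$? — more carefully, $j(x;q)=j(q/x;q)$ and the reflection $j(1/x;q)=-x^{-1}j(x;q)$ — one finds the two residue contributions are negatives of each other, so $\mathrm{Res}_{c=a}F=0$; by symmetry $c\mapsto 1/c$ (which swaps the roles and uses $j(1/x;q)=-x^{-1}j(x;q)$ again) the residue at $c=a^{-1}$ also vanishes.

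Once $F$ has no poles it is a holomorphic $q$-elliptic function of $c$, hence constant by Liouville on the torus (a holomorphic function on a compact Riemann surface is constant). To identify the constant I would evaluate at a convenient value of $c$: taking $c=d$ kills the second term (since $j(c/d;q)=j(1;q)=0$) and collapses the first term's $c$-dependence, leaving
\[
F(d)=\frac{j(ad;q)j(a/d;q)j(bd;q)j(b/d;q)}{j(ad;q)j(a/d;q)}=j(bd;q)j(b/d;q),
\]
which is exactly the desired value. This completes the proof. The main obstacle is the residue computation — one must carefully track the constants produced by differentiating the infinite product $j(a/c;q)$ at its zero $c=a$ and by applying the reflection and ellipticity formulas (\ref{equation:j-elliptic}), (\ref{equation:1.7}) to rewrite $j(b/a;q)$, $j(a/d;q)$ in the two terms so that the cancellation is manifest; everything else is routine. (An equally valid alternative, if one prefers to avoid complex analysis entirely, is a purely formal proof: expand all six theta functions via Jacobi's triple product as sums over $\Z$, and match coefficients — but the elliptic-function argument above is shorter and is the route I would take.)
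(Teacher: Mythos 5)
The paper does not actually prove this proposition; it is quoted as the classical Weierstrass three-term relation and used as a black box, so there is no in-paper argument to compare against. Your elliptic-function proof is the standard one and is essentially correct and complete in outline: $F(c)$ is invariant under $c\mapsto qc$, its only candidate poles on the torus are the simple ones at $c\equiv a$ and $c\equiv a^{-1}$, the residues there cancel precisely because of the reflection $j(1/x;q)=-x^{-1}j(x;q)$ (which follows from (\ref{equation:1.7}) and (\ref{equation:j-elliptic})), and evaluation at $c=d$ identifies the constant since $j(1;q)=0$. One genuine (though repairable) slip: the three products do \emph{not} all transform the same way under $c\mapsto qc$. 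One has $j(aqc;q)\,j(a/(qc);q)=q^{-1}c^{-2}\,j(ac;q)j(a/c;q)$ and likewise for $b$ in place of $a$ (no factor $a^{-2}$ appears), whereas $j(qcd;q)\,j(qc/d;q)=c^{-2}\,j(cd;q)j(c/d;q)$ picks up only $c^{-2}$; it is the prefactor $b/c\mapsto q^{-1}b/c$ that supplies the missing $q^{-1}$ in the second term. So your function $F$, which includes that prefactor, is indeed elliptic, but the sentence asserting a ``common factor $q^{-1}c^{-2}a^{-2}$'' for all three bare products is incorrect as stated and should be replaced by the term-by-term computation above. With that correction the argument goes through; the residue cancellation you sketch is exactly right, since at $c=a$ the two numerators differ by the factor $j(b/a;q)$ versus $(b/a)j(a/b;q)$, which are negatives of each other.
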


We collect several useful results about theta functions in terms of a proposition \cite{AH, H1, H2}: 
\begin{proposition}   For generic $x,y,z\in \mathbb{C}^*$ 
{\allowdisplaybreaks \begin{subequations}
\begin{gather}
j(x;q)j(y;q)=j(-xy;q^2)j(-qx^{-1}y;q^2)-xj(-qxy;q^2)j(-x^{-1}y;q^2),\label{equation:H1Thm1.1}\\
j(-x;q)j(y;q)-j(x;q)j(-y;q)=2xj(x^{-1}y;q^2)j(qxy;q^2),\label{equation:H1Thm1.2A}\\
j(-x;q)j(y;q)+j(x;q)j(-y;q)=2j(xy;q^2)j(qx^{-1}y;q^2).\label{equation:H1Thm1.2B}\\
j(x;q)j(y;q^n)=\sum_{k=0}^n(-1)^kq^{\binom{k}{2}}x^kj\big ((-1)^nq^{\binom{n}{2}+kn}x^ny;q^{n(n+1)}\big )j\big (-q^{1-k}x^{-1}y;q^{n+1} \big ).\label{equation:Thm1.3AH6}
\end{gather}
\end{subequations}}%
\end{proposition}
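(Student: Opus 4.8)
The plan is to derive all four identities from a single mechanism: expand each factor $j(\cdot)$ by the Jacobi triple product, so that the left-hand side becomes a double (or triple) sum over a sublattice of $\Z^2$, then reindex that lattice by a residue class, after which the triple product reassembles the resulting single sums into the theta products on the right. The rearrangements are legitimate because the relevant quadratic exponents are positive definite up to a linear term, so each power of $q$ receives only finitely many contributions. This is how these identities are established in \cite{AH, H1, H2}; below is the skeleton.

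For (\ref{equation:H1Thm1.1}) I would write $j(x;q)j(y;q)=\sum_{r,s\in\Z}(-1)^{r+s}q^{\binom r2+\binom s2}x^ry^s$ and split according to the parity of $r+s$. On the even part, substituting $r=m+n,\ s=m-n$ and using $\binom{m+n}{2}+\binom{m-n}{2}=2\binom m2+n^2$ together with $n^2=2\binom n2+n$, the double sum factors into two single theta sums whose product is $j(-xy;q^2)j(-qx^{-1}y;q^2)$. On the odd part, substituting $r=m+n+1,\ s=m-n$, the analogous computation yields $-x\,j(-qxy;q^2)j(-q^2xy^{-1};q^2)$, and a single application of (\ref{equation:1.7}) turns $j(-q^2xy^{-1};q^2)$ into $j(-x^{-1}y;q^2)$; summing the two parts gives (\ref{equation:H1Thm1.1}). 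Identities (\ref{equation:H1Thm1.2A}) and (\ref{equation:H1Thm1.2B}) then cost nothing more: applying (\ref{equation:H1Thm1.1}) with $x\mapsto -x$ and then with $y\mapsto -y$, the term $j(xy;q^2)j(qx^{-1}y;q^2)$ appears in both expansions with coefficient $+1$ while the remaining term appears with prefactors $+x$ and $-x$; subtracting gives (\ref{equation:H1Thm1.2A}) and adding gives (\ref{equation:H1Thm1.2B}).

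For (\ref{equation:Thm1.3AH6}) I would run the same scheme at modulus $n+1$ rather than $2$. Writing $j(x;q)j(y;q^n)=\sum_{r,s\in\Z}(-1)^{r+s}q^{\binom r2+n\binom s2}x^ry^s$, I reindex by $(r,s)=(k+na-b,\ a+b)$ with $0\le k\le n$ and $a,b\in\Z$; since $r+s=k+(n+1)a$, the residue $k\equiv r+s\pmod{n+1}$ is forced, so this map is a bijection of $\Z^2$ onto $\{0,1,\dots,n\}\times\Z^2$. Substituting and separating the $a$-sum from the $b$-sum, the triple product collapses them into $j\big((-1)^nq^{\binom n2+kn}x^ny;q^{n(n+1)}\big)$ and $j\big(-q^{1-k}x^{-1}y;q^{n+1}\big)$ respectively, with $(-1)^kq^{\binom k2}x^k$ pulled out front---and the case $n=1$ is exactly (\ref{equation:H1Thm1.1}). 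The one genuinely laborious step, shared by all four identities, is checking that the substitution carries the quadratic exponent onto the predicted one, i.e.\ that
\[
\binom{k+na-b}{2}+n\binom{a+b}{2}=\binom k2+n(n+1)\binom a2+\Big(\binom n2+kn\Big)a+(n+1)\binom b2+(1-k)b
\]
identically in $k,a,b$ (with $n$ a parameter); the expansion is mechanical, but this is precisely where a lost linear term or a sign error would go unnoticed, so I would treat this bookkeeping, not any conceptual point, as the main obstacle.
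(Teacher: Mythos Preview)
Your approach is correct and is exactly the classical one. The paper does not prove this proposition at all; it simply records the identities with the citation \cite{AH,H1,H2}, and the lattice-reindexing argument you outline is precisely the method used in those references. Your bijection $(r,s)=(k+na-b,\,a+b)$ with $k\equiv r+s\pmod{n+1}$ is the right change of variables, and the quadratic identity you flag as the bookkeeping step does hold (both sides equal $\tfrac12[k^2-k+n(n+1)a^2+(2kn-2n)a+(n+1)b^2+(1-n-2k)b]$); the cross term in $ab$ cancels because the $-2nab$ from expanding $(k+na-b)^2$ meets the $+2nab$ from $n(a+b)^2$.
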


We finish this subsection with a series of lemmas.
\begin{lemma} \label{lemma:N4-theta-evaluation} We have
\begin{equation}
2\overline{J}_{1,6}\overline{J}_{1,3}
 +2 \overline{J}_{3,6}\overline{J}_{3,12}=\overline{J}_{0,1}\overline{J}_{0,2}.
\end{equation}
\end{lemma}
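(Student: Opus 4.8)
The plan is to prove the identity
\[
2\overline{J}_{1,6}\overline{J}_{1,3}+2\overline{J}_{3,6}\overline{J}_{3,12}=\overline{J}_{0,1}\overline{J}_{0,2}
\]
by recognizing both products on the left as having the base $q^6$ buried inside, and then applying one of the two-term splitting identities of the preliminaries to combine them into a single product. First I would rewrite everything in terms of $j(\cdot;q^6)$: we have $\overline{J}_{1,3}=j(-q;q^3)$, and by the factorization $j(-q;q^3)=J_?\,j(-q;q^6)j(-q^4;q^6)/J_?$ coming from \eqref{equation:1.10}, and similarly $\overline{J}_{1,6}=j(-q;q^6)$, so the first term $2\overline{J}_{1,6}\overline{J}_{1,3}$ becomes a constant times $j(-q;q^6)^2 j(-q^4;q^6)$ after clearing eta-quotient factors; in the same way $\overline{J}_{3,6}=j(-q^3;q^6)$ and $\overline{J}_{3,12}=j(-q^3;q^{12})$ splits via \eqref{equation:1.10} into $j(-q^3;q^6)\cdots$, so the second term becomes a constant times $j(-q^3;q^6)^2\cdot(\text{something})$. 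The cleaner route, which I would actually pursue, is to apply \eqref{equation:H1Thm1.2B}, namely $j(-x;Q)j(y;Q)+j(x;Q)j(-y;Q)=2j(xy;Q^2)j(Qx^{-1}y;Q^2)$, with base $Q=q^3$ so that the right-hand side lives in base $q^6$.

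Concretely, take $Q=q^3$, $x=q$, $y=q^{-1}$ — or a choice adapted so that $\overline{J}_{1,3}\overline{J}_{1,6}$ and $\overline{J}_{3,6}\overline{J}_{3,12}$ appear as the two summands on the left of \eqref{equation:H1Thm1.2B} after using the elliptic transformation \eqref{equation:j-elliptic} and the symmetry \eqref{equation:1.7} to normalize arguments. One checks that $j(-q;q^3)=\overline{J}_{1,3}$ and $j(q^{-1};q^3)=j(q^2;q^3)=J_{2,3}$ up to an elliptic factor; the products $\overline{J}_{3,6}\overline{J}_{3,12}$ must be matched to $j(q;q^3)j(-q^{-1};q^3)$ after an analogous normalization, possibly after first splitting $\overline{J}_{3,12}=j(-q^3;q^{12})$ back up to base $q^6$ using \eqref{equation:1.12} or \eqref{equation:1.10} so that the two terms are genuinely homogeneous. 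Then \eqref{equation:H1Thm1.2B} collapses the left side to $2\,j(\cdot;q^6)\,j(\cdot;q^6)$, and the final step is to identify this product with $\overline{J}_{0,1}\overline{J}_{0,2}=2\overline{J}_{1,4}\cdot\overline{J}_{0,2}$, which in turn is $j(-1;q)j(-1;q^2)$; one applies \eqref{equation:1.10} (or direct product rearrangement via the eta-quotient table) to both sides to reduce each to a monomial in the $J_k$'s and compare. The product-rearrangement identities $\overline{J}_{0,1}=2J_2^2/J_1$ and the like should make this last comparison immediate.

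If the direct application of \eqref{equation:H1Thm1.2B} does not line up the arguments cleanly, the fallback is Proposition \ref{proposition:Weierstrass} (Weierstrass three-term): choose $a,b,c,d$ (all monomials in $q$, base $q^6$) so that the three resulting products are exactly $\overline{J}_{1,6}\overline{J}_{1,3}$, $\overline{J}_{3,6}\overline{J}_{3,12}$, and $\overline{J}_{0,1}\overline{J}_{0,2}$ after using \eqref{equation:1.10} to convert between bases $q^3,q^6,q^{12}$; the coefficient $b/c$ in that relation will have to come out to $\pm$ a power of $q$ that absorbs into the $2$'s, and verifying it does is the crux. Either way, the final sanity check is to expand both sides as $q$-series to, say, $O(q^{10})$ and confirm agreement, which pins down any stray power-of-$q$ or sign.

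The main obstacle I anticipate is bookkeeping the base changes: $\overline{J}_{1,3}$ and $\overline{J}_{3,12}$ are not natively in base $q^6$, so before any two- or three-term relation can be invoked one has to use \eqref{equation:1.10}/\eqref{equation:1.12} to express them homogeneously, and each such split introduces eta-quotient factors ($J_3^3/J_6^?$, etc.) that must all cancel against the $\overline{J}_{0,1}\overline{J}_{0,2}=2J_2^2/J_1\cdot 2J_4^2/J_2$ on the right. Getting those cancellations exactly right — and tracking the accumulated powers of $q$ from \eqref{equation:j-elliptic} — is where the real work lies; the structural step (one application of \eqref{equation:H1Thm1.2B} or Weierstrass) is short once the arguments are in place.
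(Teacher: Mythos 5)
There is a genuine gap here: the central step you propose --- applying \eqref{equation:H1Thm1.2B} with base $Q=q^3$ to merge the two left-hand products --- cannot be carried out as described, because that identity requires all four theta functions on its left side to share a single base, whereas $\overline{J}_{1,6}\overline{J}_{1,3}$ mixes bases $q^3$ and $q^6$ and $\overline{J}_{3,6}\overline{J}_{3,12}$ mixes bases $q^6$ and $q^{12}$. Your proposed remedy, homogenizing via \eqref{equation:1.10} or \eqref{equation:1.12}, makes the mismatch worse rather than better: for instance $\overline{J}_{1,3}=J_3\overline{J}_{1,6}\overline{J}_{2,6}/J_6^2$, so the first term becomes a multiple of $\overline{J}_{1,6}^2\overline{J}_{2,6}$, and a squared theta factor with integral-power arguments does not fit the shape of either side of \eqref{equation:H1Thm1.1} or \eqref{equation:H1Thm1.2B}. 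The Weierstrass fallback is left entirely unspecified (no candidate $a,b,c,d$ is produced), and a numerical check to $O(q^{10})$ is a sanity test, not a proof. As written, the proposal is a search strategy whose primary branch fails and whose secondary branch is empty.

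The tool that actually matches the inhomogeneous base pattern $(q^3,q^6)$ and $(q^6,q^{12})$ is \eqref{equation:Thm1.3AH6}, which expands a product $j(x;q)j(y;q^n)$ of theta functions with bases in ratio $n$ into $n+1$ products of the form $j(\cdot;q^{n(n+1)})j(\cdot;q^{n+1})$. This is what the paper uses: take $n=2$ and $x=y=-1$, so that
\begin{equation*}
j(-1;q)\,j(-1;q^2)=\sum_{k=0}^{2}q^{\binom{k}{2}}\,j\big(-q^{1+2k};q^{6}\big)\,j\big(-q^{1-k};q^{3}\big).
\end{equation*}
By \eqref{equation:j-elliptic} and \eqref{equation:1.7} the $k=0$ and $k=2$ terms each equal $\overline{J}_{1,6}\overline{J}_{1,3}$, the $k=1$ term is $\overline{J}_{3,6}\overline{J}_{0,3}=2\overline{J}_{3,6}\overline{J}_{3,12}$, and the left side is $\overline{J}_{0,1}\overline{J}_{0,2}$. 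That single specialization is the idea your proposal is missing.
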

\begin{proof}  We specialize (\ref{equation:Thm1.3AH6}) to obtain
\begin{equation}
j(x;q)j(y;q^2)=\sum_{k=0}^2(-1)^kq^{\binom{k}{2}}x^kj\big (q^{1+2k}x^2y;q^{6}\big )j\big (-q^{1-k}x^{-1}y;q^{3}\big ).
\end{equation}
Hence
\begin{align*}
j(-1;q)j(-1;q^2)&=\sum_{k=0}^2q^{\binom{k}{2}}j\big (-q^{1+2k};q^{6}\big )j\big (-q^{1-k};q^{3}\big )\\
&=\overline{J}_{1,6}\overline{J}_{1,3}+\overline{J}_{3,6}\overline{J}_{0,3}
+q\overline{J}_{1,6}j(-q^{-1};q^3)\\
&=2\overline{J}_{1,6}\overline{J}_{1,3}+2\overline{J}_{3,6}\overline{J}_{3,12}.\qedhere
\end{align*}
\end{proof}

\begin{lemma} \label{lemma:levelN4-jsplit} We have
\begin{align*}
j(q^{1/12};q^{1/6})
&=\overline{J}_{12,24}+q^{3}\overline{J}_{0,24}
-2q^{\frac{3}{4}}\overline{J}_{6,24}
+2q^{\frac{1}{3}}\overline{J}_{8,24}+2q^{\frac{4}{3}}\overline{J}_{20,24}\\
& \ \ \ \ \  -2q^{\frac{1}{12}}\overline{J}_{10,24}-2q^{\frac{25}{12}}\overline{J}_{22,24}.
\end{align*}
\end{lemma}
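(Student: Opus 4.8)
The plan is to derive Lemma \ref{lemma:levelN4-jsplit} as a direct application of the theta-splitting identity (\ref{equation:j-split}) with $m=12$, followed by a bookkeeping step that collapses the resulting twelve terms into the seven terms displayed. Concretely, I would set $q\mapsto q^{1/6}$ and $z\mapsto q^{1/12}$ in (\ref{equation:j-split}), so that $q^{\binom{k}{2}+mk}z^m$ becomes $q^{\frac{1}{6}\binom{k}{2}+2k+1}$ and the inner modulus $q^{m^2}$ becomes $q^{144/6}=q^{24}$. With $m=12$ even, the sign $(-1)^{m+1}$ is $-1$, so the general term is $(-1)^k q^{\frac{1}{12}k(k-1)} q^{\frac{1}{12}k}\, j\!\left(-q^{\frac{1}{6}\binom{12}{2}+2k+1}; q^{24}\right) = (-1)^k q^{\frac{1}{12}k^2}\, j(-q^{2k+12}; q^{24})$, after simplifying $\binom{12}{2}=66$ so that $q^{66/6}=q^{11}$ and $11+2k+1=2k+12$. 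Thus
\[
j(q^{1/12};q^{1/6})=\sum_{k=0}^{11}(-1)^k q^{\frac{1}{12}k^2}\,\overline{J}_{2k+12,24}.
\]

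The second step is purely combinatorial: reduce each $\overline{J}_{2k+12,24}$ to a standard representative $\overline{J}_{a,24}$ with $0\le a\le 24$ (or $0\le a \le 12$ after using $j(x;q)=j(q/x;q)$, i.e. $\overline{J}_{a,24}=\overline{J}_{24-a,24}$), keeping careful track of the prefactor $(-1)^k q^{k^2/12}$. For instance $k=0$ gives $\overline{J}_{12,24}$; $k=6$ gives $(-1)^6 q^{3}\overline{J}_{24,24}=q^3\overline{J}_{0,24}$; the pairs $(k,12-k)$ will combine because $\overline{J}_{2k+12,24}$ and $\overline{J}_{2(12-k)+12,24}=\overline{J}_{36-2k,24}=\overline{J}_{-2k,24}\cdot(\text{power of }q)$ are related by (\ref{equation:j-elliptic}), and one checks the $q$-powers match so the two terms add rather than cancel, producing the factors of $2$ in the stated formula. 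Carrying this out for $k=1,\dots,11$ and pairing $k$ with $12-k$ should yield exactly the seven terms $\overline{J}_{12,24}$, $q^3\overline{J}_{0,24}$, $-2q^{3/4}\overline{J}_{6,24}$, $2q^{1/3}\overline{J}_{8,24}$, $2q^{4/3}\overline{J}_{20,24}$, $-2q^{1/12}\overline{J}_{10,24}$, $-2q^{25/12}\overline{J}_{22,24}$.

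The main obstacle is not conceptual but arithmetic: getting every exponent and sign right across all twelve values of $k$, correctly applying (\ref{equation:j-elliptic}) in the form $\overline{J}_{a+24n,24}=j(-q^{a+24n};q^{24})=(-1)^{n}q^{-\binom{n}{2}\cdot 24 - na}\overline{J}_{a,24}$ when reducing indices like $2k+12$ that exceed $24$, and verifying that in each pair $(k,12-k)$ the accumulated $q$-powers genuinely coincide (e.g. that $k=1$ and $k=11$ both feed into $q^{1/12}\overline{J}_{10,24}$ up to the overall sign). I would organize this as a short table listing, for each $k\in\{0,1,\dots,11\}$, the raw term $(-1)^k q^{k^2/12}\overline{J}_{2k+12,24}$ and its reduced form, then read off the groupings. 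Once the table is in place the identity follows by inspection; no deeper input beyond (\ref{equation:j-split}) and the elementary theta relations of Section \ref{section:prelim} is needed.
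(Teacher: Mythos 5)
Your proposal is correct and follows essentially the same route as the paper: the paper applies (\ref{equation:j-split}) with $m=12$ to $j(q^{1/2};q)$ in modulus $q^{144}$ and only substitutes $q\rightarrow q^{1/6}$ at the very end, whereas you rescale from the outset to get modulus $q^{24}$ directly, but the twelve terms, the index reductions, and the pairing $k\leftrightarrow 12-k$ are identical. One small correction: your reduction formula should read $\overline{J}_{a+24n,24}=q^{-24\binom{n}{2}-na}\,\overline{J}_{a,24}$ without the factor $(-1)^{n}$, since the $(-1)^{n}$ from (\ref{equation:j-elliptic}) cancels against the $(-1)^{-n}$ arising from $(-q^{a})^{-n}$; it is precisely this absence of sign that makes each pair add rather than cancel and produces the factors of $2$, as you correctly assert in the surrounding text.
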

\begin{proof}[Proof of Lemma \ref{lemma:levelN4-jsplit}]
From (\ref{equation:j-split}) with $m=12$, we have
{\allowdisplaybreaks \begin{align*}
j(x;q)&=j(-q^{66}z^{12};q^{144})-zj(-q^{78}z^{12};q^{144})
+qz^2j(-q^{90}z^{12};q^{144})\\
&\ \ \ \ \ -q^3z^3j(-q^{102}z^{12};q^{144})
+q^{6}z^4j(-q^{114}z^{12};q^{144})
-q^{10}z^5j(-q^{126}z^{12};q^{144})\\
&\ \ \ \ \ +q^{15}z^{6}j(-q^{138}z^{12};q^{144})
-q^{21}z^{7}j(-q^{150}z^{12};q^{144})
+q^{28}z^{8}j(-q^{162}z^{12};q^{144})\\
&\ \ \ \ \ -q^{36}z^{9}j(-q^{174}z^{12};q^{144})
+q^{45}z^{10}j(-q^{186}z^{12};q^{144})
-q^{55}z^{11}j(-q^{198}z^{12};q^{144}).
\end{align*}}%
Substituting $x\rightarrow q^{1/2}$ and using identities (\ref{equation:j-elliptic}) and (\ref{equation:1.7}) yields
{\allowdisplaybreaks \begin{align*}
j&(q^{1/2};q)\\
&=\overline{J}_{72,144}-q^{\frac{1}{2}}\overline{J}_{84,144}+q^{2}\overline{J}_{96,144}
-q^{\frac{9}{2}}\overline{J}_{108,144}+q^{8}\overline{J}_{120,144}-q^{\frac{25}{2}}\overline{J}_{132,144}\\
& \ \ \ \ \ +q^{18}\overline{J}_{144,144}-q^{\frac{49}{2}}\overline{J}_{156,144}+q^{32}\overline{J}_{168,144}
-q^{\frac{81}{2}}\overline{J}_{180,144}+q^{50}\overline{J}_{192,144}-q^{\frac{121}{2}}\overline{J}_{204,144}\\
&=\overline{J}_{72,144}-q^{\frac{1}{2}}\overline{J}_{84,144}+q^{2}\overline{J}_{96,144}
-q^{\frac{9}{2}}\overline{J}_{108,144}+q^{8}\overline{J}_{120,144}-q^{\frac{25}{2}}\overline{J}_{132,144}\\
& \ \ \ \ \ +q^{18}\overline{J}_{0,144}-q^{\tfrac{25}{2}}\overline{J}_{12,144}+q^{8}\overline{J}_{24,144}
-q^{\frac{9}{2}}\overline{J}_{36,144}+q^{2}\overline{J}_{48,144}-q^{\frac{1}{2}}\overline{J}_{60,144}\\
&=\overline{J}_{72,144}-2q^{\frac{1}{2}}\overline{J}_{84,144}+2q^{2}\overline{J}_{96,144}
-2q^{\frac{9}{2}}\overline{J}_{108,144}
 +2q^{8}\overline{J}_{120,144}\\
 & \ \ \ \ \  -2q^{\frac{25}{2}}\overline{J}_{132,144}
 +q^{18}\overline{J}_{0,144}.
\end{align*}}%
The result follow from identity (\ref{equation:1.7}) and the substitution $q\rightarrow q^{1/6}$.
\end{proof}

\subsection{Appell--Lerch functions}\label{section:prop-mxqz}

The Appell-Lerch function satisfies several functional equations and identities \cite{HM, Zw}:

\begin{proposition}  For generic $x,z\in \mathbb{C}^*$
{\allowdisplaybreaks \begin{subequations}
\begin{gather}
m(x,q,z)=m(x,q,qz),\label{equation:mxqz-fnq-z}\\
m(x,q,z)=x^{-1}m(x^{-1},q,z^{-1}),\label{equation:mxqz-flip}\\
m(qx,q,z)=1-xm(x,q,z),\label{equation:mxqz-fnq-x}\\
m(x,q,z_1)-m(x,q,z_0)=\frac{z_0J_1^3j(z_1/z_0;q)j(xz_0z_1;q)}{j(z_0;q)j(z_1;q)j(xz_0;q)j(xz_1;q)}.
\label{equation:changing-z-theorem}
\end{gather}
\end{subequations}}
\end{proposition}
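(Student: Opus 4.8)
The plan is to derive all four identities directly from the defining series \eqref{equation:mxqz-def}. Write
\[ D(z):=j(z;q)\,m(x,q,z)=\sum_r \frac{(-1)^r q^{\binom r2}z^r}{1-q^{r-1}xz}, \]
and record the single reindexing that drives everything: replacing $z$ by $qz$ in the formula for $D$ and then sending $r\mapsto r-1$, the identity $\binom{r-1}{2}+(r-1)=\binom r2$ yields
\[ D(qz)=\sum_r\frac{(-1)^r q^{\binom r2+r}z^r}{1-q^r xz}=-z^{-1}D(z). \]
Since \eqref{equation:j-elliptic} with $n=1$ gives $j(qz;q)=-z^{-1}j(z;q)$, dividing these two relations produces $m(x,q,qz)=D(qz)/j(qz;q)=D(z)/j(z;q)=m(x,q,z)$, which is \eqref{equation:mxqz-fnq-z}.

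For \eqref{equation:mxqz-fnq-x} I would expand $m(qx,q,z)$ from the definition, so the denominators become $1-q^r xz$, and split $\frac{1}{1-q^r xz}=1+\frac{q^r xz}{1-q^r xz}$: the first piece sums (by the series definition of $j$) to $j(z;q)$, while the second equals $xz\,D(qz)=-x\,D(z)$, so $m(qx,q,z)=1-x\,m(x,q,z)$. For \eqref{equation:mxqz-flip} I would expand $m(x^{-1},q,z^{-1})$, clear each denominator $1-q^{r-1}x^{-1}z^{-1}$ by multiplying numerator and denominator by $-q^{1-r}xz$, and substitute $r\mapsto 2-s$; then $\binom{2-s}{2}+(s-1)=\binom s2$ together with $j(z^{-1};q)=j(qz;q)=-z^{-1}j(z;q)$ (from \eqref{equation:1.7} and \eqref{equation:j-elliptic}) collapses the result to $x\,D(z)/j(z;q)=x\,m(x,q,z)$.

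The substantial identity is \eqref{equation:changing-z-theorem}. Fix $x$ and $z_0$, take $z:=z_1$ as the variable, and multiply the claim through by $j(z_0;q)j(z;q)$; it becomes $L(z)=R(z)$, where $L(z):=D(z)j(z_0;q)-D(z_0)j(z;q)$ and $R(z):=\frac{z_0 J_1^3\,j(z/z_0;q)\,j(xz_0 z;q)}{j(xz_0;q)\,j(xz;q)}$. I would argue by a Liouville-type principle. Each summand of $D(z)$ has a single simple pole, so $D(z)$ is holomorphic on $\mathbb{C}^*$ apart from simple poles at $z\in q^{\Z}/x$; hence $L$ and $R$ carry precisely those simple poles. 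Only the $r=1$ term of $D$ is singular at $z=1/x$ and it gives $\operatorname{Res}_{z=1/x}D(z)=x^{-2}$, so $\operatorname{Res}_{z=1/x}L(z)=x^{-2}j(z_0;q)$; meanwhile $\lim_{w\to1}j(w;q)/(1-w)=J_1^3$ and $j(1/(xz_0);q)=j(qxz_0;q)=-(xz_0)^{-1}j(xz_0;q)$ (using \eqref{equation:1.7} and \eqref{equation:j-elliptic}) give $\operatorname{Res}_{z=1/x}R(z)=x^{-2}j(z_0;q)$ as well, so $L-R$ is holomorphic on $\mathbb{C}^*$. Next, $D(qz)=-z^{-1}D(z)$ and \eqref{equation:j-elliptic} show that $L$, $R$, and therefore $L-R$ all satisfy $F(qz)=-z^{-1}F(z)$, the same quasi-periodicity as $j(z;q)$; a holomorphic function on $\mathbb{C}^*$ with this multiplier is a constant multiple of $j(z;q)$, since dividing by $j(z;q)$ gives an elliptic function with at most one simple pole per period, which must be constant. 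Thus $L-R=\lambda\,j(z;q)$, and finally $L(z_0)=0$ (the two terms of $L(z_0)$ cancel) while $R(z_0)=0$ (because $j(1;q)=0$), so $\lambda\,j(z_0;q)=0$ and, $j(z_0;q)$ being generically nonzero, $\lambda=0$; hence $L=R$.

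The main obstacle is entirely inside \eqref{equation:changing-z-theorem}: one must be certain that $D(z)$ is genuinely holomorphic at the theta-zeros $z\in q^{\Z}$ (so that $L$ and $R$ carry no hidden extra poles there), and the residue computation at $z=1/x$ must come out exactly right, since a stray sign or power of $q$ would wreck the determination $\lambda=0$. Everything else is routine reindexing. Should one prefer to avoid residues, $\lambda$ can instead be isolated by multiplying $L$ and $R$ by $j(xz;q)$ and evaluating the resulting entire functions at $z=1/x$; alternatively, \eqref{equation:changing-z-theorem} can be obtained more computationally from the Weierstrass three-term relation (Proposition \ref{proposition:Weierstrass}), but the Liouville argument above is the most economical.
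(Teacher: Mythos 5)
Your proof is correct. Note, though, that the paper does not prove this proposition at all: it is quoted with a citation to [HM, Zw], so there is no in-paper argument to compare against. Your derivation is essentially the standard one from those references. The first three identities check out: the reindexing $r\mapsto r-1$ with $\binom{r-1}{2}+(r-1)=\binom{r}{2}$ does give $D(qz)=-z^{-1}D(z)$, the partial-fractions split yields \eqref{equation:mxqz-fnq-x}, and the substitution $r\mapsto 2-s$ after clearing denominators yields \eqref{equation:mxqz-flip}; I verified the exponent bookkeeping in each case. For \eqref{equation:changing-z-theorem}, the residues of $L$ and $R$ at $z=1/x$ both come out to $x^{-2}j(z_0;q)$ as you claim, both sides satisfy $F(qz)=-z^{-1}F(z)$, and the evaluation at $z=z_0$ kills the constant $\lambda$. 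The one point to tighten is an ordering issue: you conclude that $L-R$ is holomorphic on all of $\mathbb{C}^*$ immediately after cancelling the residue at the single point $z=1/x$, but $L$ and $R$ a priori have simple poles at every point of $q^{\mathbb{Z}}/x$. You should first establish the quasi-periodicity $F(qz)=-z^{-1}F(z)$ (which you do in the next sentence) and then use it to propagate holomorphy from $z=1/x$ to all of $q^{\mathbb{Z}}/x$; as written the two steps are in the wrong order, though nothing is actually missing. Your fallback suggestions (evaluating $j(xz;q)L$ and $j(xz;q)R$ at $z=1/x$ instead of computing residues, or deriving \eqref{equation:changing-z-theorem} from Proposition \ref{proposition:Weierstrass}) are also viable.
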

\begin{corollary} \label{corollary:mxqz-eval} We have
\begin{align}
m(q,q^2,-1)&=1/2,\label{equation:mxqz-eval-a}\\
m(-1,q^2,q)&=0. \label{equation:mxqz-eval-b}
\end{align}
\end{corollary}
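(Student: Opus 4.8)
The plan is to establish each evaluation in Corollary~\ref{corollary:mxqz-eval} directly from the definition \eqref{equation:mxqz-def} together with the functional equations of the preceding proposition, so that no deep machinery is needed. For \eqref{equation:mxqz-eval-a}, I would start from the specialization of the changing-$z$ identity \eqref{equation:changing-z-theorem}, or more simply exploit the symmetry \eqref{equation:mxqz-flip}: setting $x=q$ and $z=-1$ gives $m(q,q^2,-1)=q^{-1}m(q^{-1},q^2,-1)$. Then applying the raising relation \eqref{equation:mxqz-fnq-x} in the form $m(q^2\cdot q^{-1},q^2,z)=1-q^{-1}m(q^{-1},q^2,z)$, i.e. $m(q,q^2,-1)=1-q^{-1}m(q^{-1},q^2,-1)$, and combining with the previous line yields $m(q,q^2,-1)=1-m(q,q^2,-1)$, hence $m(q,q^2,-1)=1/2$. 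One must check that $z=-1$ is admissible, i.e. that neither $z=-1$ nor $xz=-q$ is an integral power of $q^2$, which is immediate.

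For \eqref{equation:mxqz-eval-b}, I would similarly combine \eqref{equation:mxqz-flip} and \eqref{equation:mxqz-fnq-z}. Writing $x=-1$, $z=q$ in the flip relation gives $m(-1,q^2,q)=(-1)^{-1}m(-1,q^2,q^{-1})=-m(-1,q^2,q^{-1})$. On the other hand, the $z$-periodicity \eqref{equation:mxqz-fnq-z} with base $q^2$ reads $m(x,q^2,z)=m(x,q^2,q^2z)$; applying it with $x=-1$, $z=q^{-1}$ gives $m(-1,q^2,q^{-1})=m(-1,q^2,q)$. Substituting back, $m(-1,q^2,q)=-m(-1,q^2,q)$, so $m(-1,q^2,q)=0$. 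Again one verifies admissibility: with $x=-1$, $z=q$ we need $z=q$ and $xz=-q$ not to be integral powers of $q^2$, which holds.

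The only real subtlety is bookkeeping with the base of the Appell--Lerch function: the relations \eqref{equation:mxqz-fnq-z}--\eqref{equation:mxqz-fnq-x} are stated with base $q$, so when I invoke them at base $q^2$ I should either re-read them with $q\mapsto q^2$ (legitimate, since they are identities in the formal/analytic variable) or, to be safe, track the substitution explicitly. I expect this substitution-of-base step to be the main place a reader could stumble, but it is routine. Once that is settled, each evaluation follows from a two-line manipulation, and I would present the two cases in sequence exactly as above. As a consistency check one can observe that these evaluations are precisely what make \eqref{equation:f121-calculation} collapse to $J_1^2$, which is reassuring but not part of the proof.
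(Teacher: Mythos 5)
Your proposal is correct: both evaluations follow exactly as you describe from the flip relation \eqref{equation:mxqz-flip} together with \eqref{equation:mxqz-fnq-x} (for the first) and \eqref{equation:mxqz-fnq-z} (for the second), read with base $q^2$, and your admissibility checks are the right ones. The paper states this corollary without proof as a consequence of the preceding proposition, and your derivation is precisely the intended standard argument, so there is nothing to add.
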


\subsection{Hecke-type double-sums}\label{section:double-sums}
We recall a few basic properties of Hecke-type double-sums.  We have a proposition and a corollary:

\begin{proposition} \cite[Proposition $6.3$]{HM}  For $x,y\in\mathbb{C}^{\star}$ and $R,S\in\mathbb{Z}$
\begin{align}
f_{a,b,c}(x,y,q)&=(-x)^{R}(-y)^Sq^{a\binom{R}{2}+bRS+c\binom{S}{2}}f_{a,b,c}(q^{aR+bS}x,q^{bR+cS}y,q) \label{equation:f-shift}\\
&\ \ \ \ +\sum_{m=0}^{R-1}(-x)^mq^{a\binom{m}{2}}j(q^{mb}y;q^c)+\sum_{m=0}^{S-1}(-y)^mq^{c\binom{m}{2}}j(q^{mb}x;q^a).\notag
\end{align}
\end{proposition}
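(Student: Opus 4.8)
The plan is to deduce the two-parameter shift by iterating two one-step recurrences, one in $x$ and one in $y$. To obtain the $y$-recurrence from the $x$-recurrence essentially for free, I would first record the elementary symmetry $f_{a,b,c}(x,y,q)=f_{c,b,a}(y,x,q)$: interchanging $r$ and $s$ in the defining double-sum (\ref{equation:fabc-def}) swaps each of $\sum_{r,s\ge0}$ and $\sum_{r,s<0}$ with itself while exchanging $a\binom r2\leftrightarrow c\binom s2$ and $x^r\leftrightarrow y^s$. The core step is then the one-step identity
\[
f_{a,b,c}(x,y,q)=j(y;q^c)-x\,f_{a,b,c}(q^ax,q^by,q),
\]
which is the case $R=1$, $S=0$. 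I would prove it by comparing double-sums. Writing $f_{a,b,c}(x,y,q)=\big(\sum_{r,s\ge0}-\sum_{r,s<0}\big)(-1)^{r+s}x^ry^sq^{a\binom r2+brs+c\binom s2}$ and expanding $x\,f_{a,b,c}(q^ax,q^by,q)$ the same way, I absorb the factors $q^{ar}$, $q^{bs}$, $x$ into the quadratic form via $ar+a\binom r2=a\binom{r+1}2$ and $bs+brs=b(r+1)s$ and then reindex $r\mapsto r-1$; this identifies $-x\,f_{a,b,c}(q^ax,q^by,q)$ with $\big(\sum_{r\ge1,\,s\ge0}-\sum_{r\le0,\,s<0}\big)(-1)^{r+s}x^ry^sq^{a\binom r2+brs+c\binom s2}$. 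Adding this to $f_{a,b,c}(x,y,q)$, the two double sums cancel everywhere except along the column $r=0$, where the surviving terms sum to $\sum_{s\in\Z}(-1)^sy^sq^{c\binom s2}=j(y;q^c)$. The companion recurrence $f_{a,b,c}(x,y,q)=j(x;q^a)-y\,f_{a,b,c}(q^bx,q^cy,q)$ then follows from the symmetry just noted.

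Next I would iterate. Applying the $x$-recurrence $R$ times (for $R<0$ one runs it in the reverse direction) and collecting the accumulated coefficient $\prod_{k=0}^{m-1}(-q^{ka}x)=(-x)^mq^{a\binom m2}$ gives
\[
f_{a,b,c}(x,y,q)=\sum_{m=0}^{R-1}(-x)^mq^{a\binom m2}j(q^{mb}y;q^c)+(-x)^Rq^{a\binom R2}f_{a,b,c}(q^{aR}x,q^{bR}y,q).
\]
Then I apply the $y$-recurrence $S$ times to the leftover term $f_{a,b,c}(q^{aR}x,q^{bR}y,q)$, which produces theta factors $j(q^{aR}q^{mb}x;q^a)$. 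These I rewrite with the ellipticity relation (\ref{equation:j-elliptic}) as $(-1)^Rq^{-a\binom R2}q^{-mbR}x^{-R}j(q^{mb}x;q^a)$: the factors $(-1)^R$, $q^{-a\binom R2}$, $x^{-R}$ cancel against the prefactor $(-x)^Rq^{a\binom R2}$, while $q^{-mbR}$ cancels the $q^{mbR}$ coming from $(-q^{bR}y)^m$, so the second-variable iteration contributes exactly $\sum_{m=0}^{S-1}(-y)^mq^{c\binom m2}j(q^{mb}x;q^a)$; and the final remainder reassembles, using $(-q^{bR}y)^S=(-y)^Sq^{bRS}$, into $(-x)^R(-y)^Sq^{a\binom R2+bRS+c\binom S2}f_{a,b,c}(q^{aR+bS}x,q^{bR+cS}y,q)$. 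Summing the three contributions is the asserted formula.

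All rearrangements are legitimate since the series converge absolutely in the relevant range of $a,b,c$. The part needing care is not conceptual but bookkeeping: the sign-sector split in the one-step lemma (keeping straight which double sum survives along $r=0$) and the ellipticity cancellations when the $x$- and $y$-iterations are combined. These are routine but error-prone, so I expect that verification to be the main obstacle.
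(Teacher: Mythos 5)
Your argument is correct and is essentially the standard proof of \cite[Proposition 6.3]{HM} (which this paper cites rather than reproves): a one-step column-splitting recurrence in each variable, obtained by reindexing the defining double sum, iterated $R$ and $S$ times, with the ellipticity relation (\ref{equation:j-elliptic}) reconciling the theta terms produced by the second iteration with the prefactor from the first. The only slip is verbal---where you say ``adding'' $-x\,f_{a,b,c}(q^ax,q^by,q)$ to $f_{a,b,c}(x,y,q)$ you mean subtracting it (equivalently, adding $x\,f_{a,b,c}(q^ax,q^by,q)$), which is what isolates the $r=0$ column and yields $j(y;q^c)$.
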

We also have the property \cite[$(6.2)$]{HM}:
\begin{equation}
f_{a,b,c}(x,y,q)=-\frac{q^{a+b+c}}{xy}f_{a,b,c}(q^{2a+b}/x,q^{2c+b}/y,q).\label{equation:f-flip}
\end{equation}

In order to state the double-sum formulas that we will be using, we introduce the useful
\begin{align}
g_{1,b,1}(x,y,q,z_1,z_0)
&:=j(y;q)m\Big (q^{\binom{b+1}{2}-1}x(-y)^{-b},q^{b^2-1},z_1\Big )\label{equation:mdef-2}\\
&\ \ \ \ \ + j(x;q)m\Big (q^{\binom{b+1}{2}-1}y(-x)^{-b},q^{b^2-1},z_0\Big ).\notag
\end{align}

In \cite[Theorem 1.3]{HM}, we specialize $n=1$, to have 
\begin{theorem}  \label{theorem:masterFnp} Let $p$ be a positive integer.  For generic $x,y\in \mathbb{C}^*$
\begin{align*}
f_{1,p+1,1}(x,y,q)=g_{1,p+1,1}(x,y,q,-1,-1)+\frac{1}{\overline{J}_{0,p(2+p)}}\cdot \theta_{p}(x,y,q),
\end{align*}
where
\begin{align*}
&\theta_{p}(x,y,q):=\sum_{r=0}^{p-1}\sum_{s=0}^{p-1}q^{\binom{r}{2}+(1+p) (r) (s+1 )+\binom{s+1}{2}}  (-x)^{r}(-y)^{s+1}\notag\\
 & \cdot   \frac{J_{p^2(2+p)}^3j(-q^{p(s-r)}x/y;q^{p^2})j(q^{p(2+p)(r+s)+p(1+p)}x^py^p;q^{p^2(2+p)})}{j(q^{p(2+p)r+p(1+p)/2}(-y)^{1+p}/(-x);q^{p^2(2+p)})j(q^{p(2+p)s+p(1+p)/2}(-x)^{1+p}/(-y);q^{p^2(2+p)})}.\notag
\end{align*}
\end{theorem}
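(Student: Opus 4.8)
The statement is \cite[Theorem 1.3]{HM} specialized to $n=1$, so strictly speaking one may just invoke that reference; for completeness I indicate how the underlying result is proved. The key point is that, although $f_{1,p+1,1}$ is a genuinely indefinite double sum, the quadratic form $\binom{r}{2}+(p+1)rs+\binom{s}{2}$ has discriminant $(p+1)^{2}-1=p(p+2)>0$, and the ``sign cone'' $\{(r,s)\in\Z^{2}:\sg(r)=\sg(s)\}$ is a cone on which this form tends to $+\infty$. Summing one of the two variables geometrically resums most of the cone into Appell--Lerch functions, while a bounded set of lattice points near the apex contributes a finite sum of products of theta functions.

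Here is the plan. First I would normalize the sum using the shift identity \eqref{equation:f-shift} and the flip \eqref{equation:f-flip}, together with the symmetry $f_{a,b,c}(x,y,q)=f_{c,b,a}(y,x,q)$; this puts the apex in a convenient place and makes the geometric expansions below legitimate. Next, summing over the variable transverse to one edge of the cone, the inner series is one-sided and geometric, so by the very definition \eqref{equation:mxqz-def} of $m(x,q,z)$ — recognizing $\sum_{k\ge 0}(q^{r-1}xz)^{k}=1/(1-q^{r-1}xz)$ in reverse — it reassembles, after the rescaling $q\mapsto q^{p(p+2)}$ dictated by the discriminant, into $j(y;q)\,m\bigl(q^{\binom{p+2}{2}-1}x(-y)^{-(p+1)},q^{p(p+2)},-1\bigr)$; the other edge produces the companion term with $x$ and $y$ exchanged, and together these are the two summands of $g_{1,p+1,1}(x,y,q,-1,-1)$. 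Finally, the finitely many lattice points not absorbed by these two half-cone pieces are exactly those indexed by $0\le r,s\le p-1$; summing the geometric tails attached to them and repeatedly applying Jacobi's triple product together with the product rearrangements and splitting identities of Section~\ref{section:prelim} (in particular \eqref{equation:j-split} and \eqref{equation:Thm1.3AH6}) collapses this remainder into $\overline{J}_{0,p(2+p)}^{-1}\,\theta_{p}(x,y,q)$, the moduli $q^{p^{2}}$ and $q^{p^{2}(2+p)}$ entering as the rescaled directions of the dissected form.

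A cleaner but less transparent alternative is the functional-equation method: check that both sides transform the same way under $x\mapsto qx$ and $y\mapsto qy$ — the left side via \eqref{equation:f-shift}, the right side via \eqref{equation:mxqz-fnq-x}, \eqref{equation:changing-z-theorem}, and \eqref{equation:j-elliptic} — and that the apparent poles of the right side cancel, so that the difference of the two sides is a holomorphic theta function of controlled index, hence lies in a finite-dimensional space (constant in $x$ and in $y$ if the quasi-periodicities match exactly); one then kills it by matching finitely many Fourier coefficients, or by specializing $x$ and $y$ as in \eqref{equation:f121-calculation}. In either approach the substance is bookkeeping rather than ideas: the hard part is tracking the sign function $\sg$ through the dissection and the shifts so that the boundary terms recombine with precisely the right signs and powers of $q$ into the displayed theta quotient, and (in the second approach) identifying the residual theta space and pinning down the vanishing of the difference. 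A minor technical caveat is that the geometric expansions converge only for $x,y$ in a strip, so the identity for generic $x,y\in\C^{*}$ follows by analytic continuation in $x$ and $y$.
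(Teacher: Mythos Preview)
Your proposal is correct: the paper does not prove this theorem at all but simply records it in the preliminaries as the specialization $n=1$ of \cite[Theorem~1.3]{HM}, exactly as you note in your first sentence. The sketch you add of the underlying argument in \cite{HM} is extra (and a fair high-level summary of the cone-dissection method used there), but it goes beyond what the present paper does, which is merely to cite the result.
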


The specialization for $p=1$ will be of importance.  It is just (\ref{equation:f121}):
\begin{corollary} We have
\begin{align}
&\Big (\sum_{r,s\ge 0}-\sum_{r,s<0}\Big )(-1)^{r+s}x^ry^sq^{\binom{r}{2}+2rs+\binom{s}{2}}\\
&\ \ \ \ \ =j(y;q)m\Big (\frac{q^2x}{y^2},q^3,-1\Big )+j(x;q)m\Big (\frac{q^2y}{x^2},q^3,-1\Big )
- \frac{yJ_{3}^3j(-x/y;q)j(q^2xy;q^3)}{\overline{J}_{0,3}j(-qy^2/x,-qx^2/y;q^3)}.\notag 
\end{align}
\end{corollary}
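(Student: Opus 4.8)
The plan is to obtain this as the $p=1$ case of Theorem \ref{theorem:masterFnp}. Indeed, the left-hand side is by definition exactly $f_{1,2,1}(x,y,q)$: take $a=c=1$, $b=2$ in (\ref{equation:fabc-def}). So I would set $p=1$ in Theorem \ref{theorem:masterFnp} and simplify the two resulting pieces, the $g$-term and the $\theta$-term.

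First, for the $g$-term: with $p+1=2$, formula (\ref{equation:mdef-2}) gives $g_{1,2,1}(x,y,q,-1,-1)$, and the parameters evaluate as $\binom{b+1}{2}-1=\binom{3}{2}-1=2$, $b^2-1=3$, and $(-y)^{-b}=(-y)^{-2}=y^{-2}$ (likewise $(-x)^{-2}=x^{-2}$), so that
\begin{equation*}
g_{1,2,1}(x,y,q,-1,-1)=j(y;q)\,m\Big(\frac{q^2x}{y^2},q^3,-1\Big)+j(x;q)\,m\Big(\frac{q^2y}{x^2},q^3,-1\Big),
\end{equation*}
which supplies the first line of the right-hand side.

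Second, for the $\theta$-term: when $p=1$ the double sum $\sum_{r=0}^{p-1}\sum_{s=0}^{p-1}$ reduces to the single summand $r=s=0$. Putting $r=s=0$ and $p=1$, I get: the leading exponent $\binom{0}{2}+2\cdot 0\cdot 1+\binom{1}{2}=0$; the prefactor $(-x)^0(-y)^1=-y$; the modulus $p^2(2+p)=3$ with $p^2=1$; the shift $p(1+p)/2=1$; and $q^{p(s-r)}=1$, $q^{p(2+p)(r+s)+p(1+p)}=q^2$. Hence
\begin{equation*}
\theta_1(x,y,q)=-y\cdot\frac{J_3^3\,j(-x/y;q)\,j(q^2xy;q^3)}{j(-qy^2/x;q^3)\,j(-qx^2/y;q^3)},
\end{equation*}
and since $\overline{J}_{0,p(2+p)}=\overline{J}_{0,3}$, dividing by this factor gives precisely the last term of the right-hand side (written using the abbreviation $j(a,b;q):=j(a;q)j(b;q)$). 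Adding the two pieces completes the proof; equivalently, this is just (\ref{equation:f121}) with $(x,y)$ kept generic.

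There is no genuine obstacle here: the argument is a direct specialization of an already-stated theorem, and the only thing to watch is the bookkeeping — in particular, that the half-integer exponent $p(1+p)/2$ becomes the integer $1$ at $p=1$ so that no fractional powers of $q$ appear, and that the signs coming from $(-x)^r$, $(-y)^{s+1}$ and from $(-y)^{-b}$, $(-x)^{-b}$ are tracked correctly.
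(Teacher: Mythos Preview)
Your proposal is correct and matches the paper's approach exactly: the corollary is stated immediately after Theorem~\ref{theorem:masterFnp} as its $p=1$ specialization (it is just (\ref{equation:f121})), and your careful bookkeeping of the $g$-term and the single $r=s=0$ summand in $\theta_{p}$ reproduces precisely that computation.
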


\noindent For another useful result, we specialize \cite[Theorem 1.4]{HM} to $a=b=n$, $c=1$.  
\begin{theorem} \label{theorem:main-acdivb} Let $n$ be a positive integer.  Then
\begin{align*}
& f_{n,n,1}(x,y,q)=h_{n,n,1}(x,y,q,-1,-1)-\frac{1}{\overline{J}_{0,n-1}\overline{J}_{0,n^2-n}}\cdot \theta_{n}(x,y,q),
\end{align*}
where 
\begin{align*}
h_{n,n,1}(x,y,q,z_1,z_0):&=j(x;q^n)m\Big( -q^{n-1}yx^{-1},q^{n-1},z_1 \Big )\\
& \ \ \ \ \ \ +j(y;q)m\Big( q^{\binom{n}{2}}x(-y)^{-n},q^{n^2-n},z_0 \Big ),
\end{align*}
and
\begin{align*}
&\theta_{n}(x,y,q):=\sum_{d=0}^{n-1}
q^{(n-1)\binom{d+1}{2}}j\big (q^{(n-1)(d+1)}y;q^{n}\big )  j\big (-q^{n(n-1)-(n-1)(d+1)}xy^{-1};q^{n(n-1)}\big ) \\
& \cdot \frac{J_{n(n-1)}^3j\big (q^{ \binom{n}{2}+(n-1)(d+1)}(-y)^{1-n};q^{n(n-1)}\big )}
{j\big (-q^{\binom{n}{2}}x(-y)^{-n};q^{n(n-1)})j(q^{(n-1)(d+1)}x^{-1}y;q^{n(n-1)}\big )}.
\end{align*}
\end{theorem}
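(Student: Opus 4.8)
The plan is to deduce the identity as the $a=b=n$, $c=1$ specialization of the general decomposition \cite[Theorem 1.4]{HM}, which for generic $x,y$ and admissible $a,b,c$ expresses $f_{a,b,c}(x,y,q)$ as a sum of two Appell--Lerch functions (the $h$-term) together with a finite quotient of theta functions. First I would write that general identity out with its parameters $a,b,c,z_1,z_0$, and then substitute $a=b=n$, $c=1$ throughout. The discriminant-type modulus $b^2-ac$ becomes $n^2-n=n(n-1)$, the auxiliary modulus $a-c$ (equivalently $b-c$) becomes $n-1$, and the overall prefactor collapses to $1/(\overline{J}_{0,n-1}\overline{J}_{0,n^2-n})$ as in the statement; for this one restricts to $n\ge 2$ so that the theta constants $\overline{J}_{0,n-1}$ and $\overline{J}_{0,n^2-n}$ are defined (the case $n=1$ is degenerate and is not needed elsewhere in the paper). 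At this stage one also checks that the genericity hypotheses on $x,y$ in \cite[Theorem 1.4]{HM} are satisfied, i.e.\ that none of the theta denominators appearing below vanish.

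Next I would simplify the $h$-term. Since \cite[Theorem 1.4]{HM} is not symmetric in $x$ and $y$, one must keep track of which variable carries the base $q^{a}$ and which carries $q^{c}$: under $a=b=n$, $c=1$ the first summand becomes $j(x;q^{n})\,m(-q^{n-1}yx^{-1},q^{n-1},z_1)$, the inner base being $q^{b-c}=q^{n-1}$ and the exponent on $-y/x$ being $b-c=n-1$, while the second summand becomes $j(y;q)\,m(q^{\binom{n}{2}}x(-y)^{-n},q^{n^2-n},z_0)$, where the general binomial exponent reduces to $\binom{n}{2}$ and the general base to $q^{n^2-n}$. Together these are exactly $h_{n,n,1}(x,y,q,z_1,z_0)$ as displayed.

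Finally I would specialize the theta sum. In the general $\theta$ of \cite[Theorem 1.4]{HM} the summation runs over two indices whose coupling is governed by the quadratic form $a\binom{r}{2}+brs+c\binom{s}{2}$; taking $c=1$ degenerates the $s$-dependence to a single power, so that after one use of the ellipticity relation \eqref{equation:j-elliptic} (and \eqref{equation:1.7} to normalize the arguments) the $s$-sum collapses and only the index $d\in\{0,1,\dots,n-1\}$ survives, producing the single sum in the statement. Matching factor by factor then yields $q^{(n-1)\binom{d+1}{2}}j(q^{(n-1)(d+1)}y;q^{n})$, the factor $j(-q^{n(n-1)-(n-1)(d+1)}xy^{-1};q^{n(n-1)})$, the numerator $J_{n(n-1)}^{3}j(q^{\binom{n}{2}+(n-1)(d+1)}(-y)^{1-n};q^{n(n-1)})$, and the two denominator theta factors. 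The main obstacle is entirely bookkeeping: carrying the various $q$-exponents ($\binom{d+1}{2}$, $(n-1)(d+1)$, $n(n-1)$, $\binom{n}{2}$) correctly through the substitution and, in particular, verifying that collapsing the $s$-sum at $c=1$ introduces neither a spurious sign nor a spurious power of $q$; there is no conceptual difficulty beyond that.
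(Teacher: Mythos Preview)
Your approach is exactly that of the paper: the statement is obtained by specializing \cite[Theorem~1.4]{HM} to $a=b=n$, $c=1$, and the paper says nothing more than this before stating the result. Your additional bookkeeping (tracking $b^2-ac=n(n-1)$, identifying the two Appell--Lerch summands, and noting the $n\ge 2$ restriction) is a reasonable expansion of what the paper leaves implicit.
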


Theorem \ref{theorem:main-acdivb} has the following specializations.

\begin{corollary}  \label{corollary:f221-expansion}  We have
{\allowdisplaybreaks \begin{align}
f_{2,2,1}(x,y,q)&=h_{2,2,1}(x,y,q,-1,-1)\label{equation:f221-id}\\
&\ \ \ \ -\sum_{d=0}^{1}
\frac{q^{\binom{d+1}{2}}j\big (q^{1+d}y;q^{2}\big )  j\big (-q^{1-d}x/y;q^{2}\big ) 
J_{2}^3j\big (-q^{2+d}/y;q^{2}\big )}
{4\overline{J}_{1,4}\overline{J}_{2,8}j\big (-qx/y^{2};q^2\big )j\big (q^{1+d}y/x;q^2\big )},\notag
\end{align}}%
where
\begin{align}
h_{2,2,1}(x,y,q,-1,-1)=j(x;q^2)m(-qx^{-1}y,q,-1)+j(y;q)m(qxy^{-2},q^2,-1).\label{equation:g221-id}
\end{align}
\end{corollary}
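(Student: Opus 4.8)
The plan is to derive Corollary~\ref{corollary:f221-expansion} directly from Theorem~\ref{theorem:main-acdivb} by specializing $n=2$, simplifying the resulting $q$-exponents, and applying one product rearrangement to recast the overall constant. First I would set $n=2$, so that $n-1=1$, $n^2-n=2$, $\binom{n}{2}=1$, $n(n-1)=2$, and $J_{n(n-1)}=J_2$; Theorem~\ref{theorem:main-acdivb} then reads
\begin{equation*}
f_{2,2,1}(x,y,q)=h_{2,2,1}(x,y,q,-1,-1)-\frac{1}{\overline{J}_{0,1}\overline{J}_{0,2}}\,\theta_2(x,y,q).
\end{equation*}
For the $h$-term, substituting $n=2$ into the formula for $h_{n,n,1}$ gives $j(x;q^n)m(-q^{n-1}yx^{-1},q^{n-1},-1)=j(x;q^2)m(-qx^{-1}y,q,-1)$ and $j(y;q)m(q^{\binom{n}{2}}x(-y)^{-n},q^{n^2-n},-1)=j(y;q)m(qxy^{-2},q^2,-1)$, which is exactly (\ref{equation:g221-id}).

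Next I would unwind $\theta_2(x,y,q)$. With $n=2$ the index $d$ runs over $\{0,1\}$, and each exponent collapses: $q^{(n-1)\binom{d+1}{2}}=q^{\binom{d+1}{2}}$, $q^{(n-1)(d+1)}y=q^{1+d}y$, $-q^{n(n-1)-(n-1)(d+1)}xy^{-1}=-q^{1-d}x/y$, $q^{\binom{n}{2}+(n-1)(d+1)}(-y)^{1-n}=-q^{2+d}/y$, $-q^{\binom{n}{2}}x(-y)^{-n}=-qx/y^2$, and $q^{(n-1)(d+1)}x^{-1}y=q^{1+d}y/x$, with every theta modulus equal to $q^{n(n-1)}=q^2$. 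This reproduces precisely the $d$-sum appearing in (\ref{equation:f221-id}) once the constant is absorbed into the summand. Finally I would recast that constant: the list of product rearrangements gives $\overline{J}_{0,1}=2\overline{J}_{1,4}$, and applying the substitution $q\mapsto q^2$ to this identity yields $\overline{J}_{0,2}=2\overline{J}_{2,8}$, so $\overline{J}_{0,1}\overline{J}_{0,2}=4\overline{J}_{1,4}\overline{J}_{2,8}$. Substituting this into the displayed specialization and distributing the factor $1/(4\overline{J}_{1,4}\overline{J}_{2,8})$ over the $d$-sum gives (\ref{equation:f221-id}), completing the argument.

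Since Theorem~\ref{theorem:main-acdivb} is available, there is no substantive obstacle; the only steps requiring attention are the routine bookkeeping of the $q$-powers in passing from the general $\theta_n$ to $\theta_2$, and the verification that $\overline{J}_{0,2}=2\overline{J}_{2,8}$ is the $q\mapsto q^2$ image of a listed rearrangement.
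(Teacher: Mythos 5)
Your proposal is correct and matches the paper's intent exactly: the paper presents Corollary \ref{corollary:f221-expansion} as the direct $n=2$ specialization of Theorem \ref{theorem:main-acdivb}, with the same bookkeeping of exponents and the same rewriting $\overline{J}_{0,1}\overline{J}_{0,2}=4\overline{J}_{1,4}\overline{J}_{2,8}$ that you carry out. Nothing further is needed.
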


\begin{corollary}  \label{corollary:f331-expansion}  We have
{\allowdisplaybreaks \begin{align}
f_{3,3,1}(x,y,q)&=h_{3,3,1}(x,y,q,-1,-1)\label{equation:f331-id}\\
&\ \ \ \ -\sum_{d=0}^{2}
\frac{q^{d(d+1)}j\big (q^{2+2d}y;q^{3}\big )  j\big (-q^{4-2d}x/y;q^{6}\big ) J_{6}^3j\big (q^{5+2d}/y^{2};q^{6}\big )}
{4\overline{J}_{2,8}\overline{J}_{6,24}j\big (q^{3}x/y^{3};q^6\big )j\big (q^{2+2d}y/x;q^6\big )},\notag
\end{align}}%
where
\begin{align}
h_{3,3,1}(x,y,q,-1,-1)=j(x;q^3)m(-q^2x^{-1}y,q^2,-1)+j(y;q)m(-q^3xy^{-3},q^6,-1).\label{equation:g331-id}
\end{align}
\end{corollary}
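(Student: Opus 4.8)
The plan is to obtain Corollary \ref{corollary:f331-expansion} as the special case $n=3$ of Theorem \ref{theorem:main-acdivb}, so the proof is entirely a matter of substituting $n=3$ and then simplifying the resulting theta- and Appell--Lerch-function arguments. First I would record the numerical data at $n=3$: $n-1=2$, $n^{2}-n=6=n(n-1)$, and $\binom{n}{2}=3$. With these, Theorem \ref{theorem:main-acdivb} reads
\[
f_{3,3,1}(x,y,q)=h_{3,3,1}(x,y,q,-1,-1)-\frac{1}{\overline{J}_{0,2}\,\overline{J}_{0,6}}\,\theta_{3}(x,y,q),
\]
where $h_{3,3,1}(x,y,q,-1,-1)=j(x;q^{3})m(-q^{2}x^{-1}y,q^{2},-1)+j(y;q)m(q^{3}x(-y)^{-3},q^{6},-1)$. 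Using $(-y)^{-3}=-y^{-3}$, the last Appell--Lerch argument becomes $-q^{3}xy^{-3}$, which is exactly (\ref{equation:g331-id}).

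Next I would expand $\theta_{3}(x,y,q)$ term by term over $d\in\{0,1,2\}$. The overall $q$-power is $q^{(n-1)\binom{d+1}{2}}=q^{2\binom{d+1}{2}}=q^{d(d+1)}$, and the five theta factors specialize, in order, to $j(q^{2+2d}y;q^{3})$, $j(-q^{4-2d}x/y;q^{6})$, $j(q^{5+2d}/y^{2};q^{6})$ (from $(-y)^{1-n}=(-y)^{-2}=y^{-2}$), $j(q^{3}x/y^{3};q^{6})$ (from $-q^{3}x(-y)^{-3}=q^{3}xy^{-3}$), and $j(q^{2+2d}y/x;q^{6})$, while the cube in the numerator is $J_{6}^{3}$. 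Collecting these reproduces the finite sum in (\ref{equation:f331-id}) up to the normalising constant.

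It remains to reconcile the constant $1/(\overline{J}_{0,2}\,\overline{J}_{0,6})$ with the $1/(4\,\overline{J}_{2,8}\,\overline{J}_{6,24})$ appearing in the statement. Applying the product rearrangement $\overline{J}_{0,1}=2\overline{J}_{1,4}$ under $q\mapsto q^{2}$ and under $q\mapsto q^{6}$ gives $\overline{J}_{0,2}=2\overline{J}_{2,8}$ and $\overline{J}_{0,6}=2\overline{J}_{6,24}$, so $\overline{J}_{0,2}\,\overline{J}_{0,6}=4\,\overline{J}_{2,8}\,\overline{J}_{6,24}$, and the $n=3$ formula takes the exact shape of (\ref{equation:f331-id}). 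I do not expect a genuine obstacle here: the only places to be careful are the signs arising from the odd powers $(-y)^{1-n}$ and $(-y)^{-n}$, and the bookkeeping of the exponents $\binom{d+1}{2}$ and $(n-1)(d+1)$; everything analytic (the validity of the underlying identities of \cite{HM} and the genericity hypotheses on $x,y$) is inherited directly from Theorem \ref{theorem:main-acdivb}.
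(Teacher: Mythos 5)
Your proposal is correct and is exactly the paper's route: the corollary is obtained by setting $n=3$ in Theorem \ref{theorem:main-acdivb}, simplifying the signs coming from $(-y)^{-3}$ and $(-y)^{-2}$, and rewriting $\overline{J}_{0,2}\overline{J}_{0,6}=4\overline{J}_{2,8}\overline{J}_{6,24}$. The paper states the specialization without writing out these details, and your bookkeeping of the exponents and signs checks out.
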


In Theorem \ref{theorem:masterFnp}, we set $z_1=z_0=-1$ in the Appell-Lerch expression (\ref{equation:mdef-2}).  For examples where $p=2,3$, we can set $z_1=z_0^{-1}=y/x$ to reduce the number of theta quotients.    For example, we can specialize \cite[Theorem 1.9]{HM} to $n=1$ to have
\begin{theorem} \label{theorem:genfn2} For generic $x,y\in\mathbb{C}^*$
\begin{align*}
f_{1,3,1}(x,y,q)=g_{1,3,1}(x,y,q,y/x,x/y)-\Theta_{1,2}(x,y,q),
\end{align*}
where
\begin{align*}
\Theta_{1,2}(x,y,q):=\frac{qxyJ_{2,4}J_{8,16}j(q^{3}xy;q^{8})j(q^{2}/x^2y^2;q^{16})}
{j(-q^{3}x^2;q^8)j(-q^{3}y^2;q^{8})}.
\end{align*}
\end{theorem}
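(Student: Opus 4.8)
The plan is to obtain Theorem~\ref{theorem:genfn2} as a specialization of \cite[Theorem 1.9]{HM} at $n=1$, so the main work is bookkeeping: verifying that the general formula collapses to the stated one under this substitution. First I would write down the $n=1$ case of \cite[Theorem 1.9]{HM} verbatim, keeping track of how the parameters $a,b,c$ specialize (here we want the $f_{1,3,1}$ double-sum, so $b=3$, $a=c=1$) and how the auxiliary variables $z_1,z_0$ are chosen. The key point is that in \cite[Theorem 1.9]{HM} one is permitted to set $z_1=z_0^{-1}=y/x$, and the substance of the proof is checking that this choice is admissible (i.e.\ that $z_0$, $z_1$, $xz_0$, $xz_1$ are not integral powers of the relevant $q$-power, which holds for generic $x,y$) and then that it reduces the number of theta quotients in the error term from the general shape $\theta_p(x,y,q)$ of Theorem~\ref{theorem:masterFnp} down to the single clean expression $\Theta_{1,2}(x,y,q)$.

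Concretely, the Appell--Lerch part becomes $g_{1,3,1}(x,y,q,y/x,x/y)$ by definition~(\ref{equation:mdef-2}), with the $m$-arguments $q^{\binom{4}{2}-1}x(-y)^{-3}=q^5x/y^3$ and $q^5y/x^3$ and modulus $q^{b^2-1}=q^8$ — this part requires no manipulation, it is just substitution. The content is in the theta-quotient term. I would take the general $\theta_p$ with $p=2$ (note $f_{1,p+1,1}=f_{1,3,1}$ forces $p=2$), which a priori is a $2\times 2$ double sum of ratios of theta functions with moduli $q^{p^2}=q^4$, $q^{p^2(2+p)}=q^{16}$, etc., and show that the choice $z_1=z_0^{-1}=y/x$ in the more refined \cite[Theorem 1.9]{HM} (which already incorporates this reduction) leaves exactly one surviving term. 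Then I would match that term against $\Theta_{1,2}$: the factor $qxy$ matches a prefactor $(-x)^r(-y)^{s+1}q^{\text{(quadratic)}}$ at the surviving $(r,s)$; the factor $J_{2,4}$ comes from a $j$ at modulus $q^4$; $J_{8,16}$ from $J_{p^2(2+p)}^3$ combined with simplification at modulus $q^{16}$; $j(q^3xy;q^8)$ from the numerator $j(\cdots x^py^p;\cdots)$ after using the elliptic transformation~(\ref{equation:j-elliptic}); $j(q^2/x^2y^2;q^{16})$ likewise; and the two denominator factors $j(-q^3x^2;q^8)$, $j(-q^3y^2;q^8)$ from the denominator $j$'s after rewriting $(-y)^{1+p}/(-x)=y^3/(-x)$ and applying~(\ref{equation:j-elliptic}) and~(\ref{equation:1.7}) to bring the modulus down to $q^8$.

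I expect the main obstacle to be exactly this last matching step: the raw output of the general theorem has moduli $q^{16}$ and $q^4$ and arguments that are monomials in $x,y$ with large $q$-powers, and one must repeatedly apply the theta identities~(\ref{equation:j-elliptic}), (\ref{equation:1.7}), and possibly~(\ref{equation:1.10}) or~(\ref{equation:1.12}) to normalize every factor into the form displayed in $\Theta_{1,2}$, all while tracking the accumulated powers of $q$ and the sign $(-1)^{\binom{n}{2}}$-type factors so that the coefficient is precisely $qxy$ and not $qxy$ times some spurious power. A secondary, smaller obstacle is justifying the genericity hypotheses: one must check that the denominators appearing in both $g_{1,3,1}(x,y,q,y/x,x/y)$ and $\Theta_{1,2}$ are nonzero for $x,y$ outside a codimension-one set, so that the identity is an equality of meromorphic functions, after which it extends by analytic continuation. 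Since all of this is routine given \cite[Theorem 1.9]{HM}, the honest one-line summary of the proof is: \emph{specialize \cite[Theorem 1.9]{HM} to $n=1$ and simplify the theta quotients using~(\ref{equation:j-elliptic}) and~(\ref{equation:1.7})}.
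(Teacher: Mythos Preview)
Your proposal is correct and matches the paper's treatment exactly: the paper does not give an independent proof of Theorem~\ref{theorem:genfn2} but simply records it as the $n=1$ specialization of \cite[Theorem~1.9]{HM}, precisely as you describe. Your additional commentary about how the theta quotient in $\Theta_{1,2}$ would arise from the general $\theta_p$ of Theorem~\ref{theorem:masterFnp} is accurate context, but strictly speaking unnecessary, since the reduction via $z_1=z_0^{-1}=y/x$ is already carried out in \cite[Theorem~1.9]{HM} itself; the only work left is the literal substitution $n=1$.
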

We can also specialize \cite[Theorem 1.10]{HM} to $n=1$ to have
\begin{theorem} \label{theorem:genfn3} For generic $x,y\in\mathbb{C}^{*}$
\begin{align*}
f_{1,4,1}(x,y,q)=g_{1,4,1}(x,y,q,y/x,x/y)-\Theta_{1,3}(x,y,q),
\end{align*}
where
{\allowdisplaybreaks \begin{align*}
\Theta_{1,3}&(x,y,q):=\frac{qxyJ_{3}J_{15}j(q^{2}x,q^{2}y;q^{5})}
 {J_{5}^2j(q^{6}x^3;q^{15})j(q^{6}y^3;q^{15})}\\
&\cdot  \Big \{ j(q^{11}x^2y;q^{15})j(q^{11}xy^2;q^{15})  -{q^{4}}{xy}j(q^{16}x^2y;q^{15})j(q^{16}xy^2;q^{15})\Big \}.
\end{align*}}
\end{theorem}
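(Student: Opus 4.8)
The statement is the $n=1$ instance of \cite[Theorem 1.10]{HM}, so one proof is to quote that theorem directly; what follows is a self-contained derivation using only results collected above. The plan is to begin from Theorem \ref{theorem:masterFnp} with $p=3$ — which already expresses $f_{1,4,1}(x,y,q)$ — and then to trade the Appell--Lerch parameters $z=-1$ for $z_{1}=y/x$ and $z_{0}=x/y$; following \cite{HM}, this particular choice collapses a large theta sum into the compact quotient $\Theta_{1,3}$.

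First I would apply Theorem \ref{theorem:masterFnp} with $p=3$, which gives
\begin{equation*}
f_{1,4,1}(x,y,q)=g_{1,4,1}(x,y,q,-1,-1)+\frac{1}{\overline{J}_{0,15}}\,\theta_{3}(x,y,q),
\end{equation*}
where $\theta_{3}(x,y,q)$ is the nine-term double sum ($0\le r,s\le 2$) of theta quotients built from the moduli $q^{9}$, $q^{15}$, $q^{45}$. Now $g_{1,4,1}(x,y,q,y/x,x/y)$ differs from $g_{1,4,1}(x,y,q,-1,-1)$ only in the $z$-arguments of its two Appell--Lerch functions — in $m(q^{9}xy^{-4},q^{15},z)$ the parameter $z$ changes from $-1$ to $y/x$, and in $m(q^{9}yx^{-4},q^{15},z)$ it changes from $-1$ to $x/y$ — so I would apply the change-of-$z$ relation \eqref{equation:changing-z-theorem} to each, picking up one theta quotient apiece, each carrying a $J_{15}^{3}$ factor and an $\overline{J}_{0,15}$ in the denominator. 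Substituting back, the theorem reduces to the pure theta-function identity asserting that $\overline{J}_{0,15}^{-1}\,\theta_{3}(x,y,q)+\Theta_{1,3}(x,y,q)$ equals the sum of those two correction quotients.

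To prove that identity I would clear the denominator $j(q^{6}x^{3};q^{15})j(q^{6}y^{3};q^{15})$ of $\Theta_{1,3}$, rewrite the $q^{9}$- and $q^{45}$-modulus factors of $\theta_{3}$ in terms of $q^{15}$ by means of the splitting formulas \eqref{equation:1.10}, \eqref{equation:1.12} and the elliptic transformation \eqref{equation:j-elliptic}, and then merge the nine summands of $\theta_{3}$ in pairs via repeated use of the Weierstrass three-term relation, Proposition \ref{proposition:Weierstrass}. Grouping the nine terms by the residue of $r-s$ modulo $3$ looks like the natural organization, since then the factor $j(-q^{3(s-r)}x/y;q^{9})$ changes only by an elliptic shift within each group. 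After the cancellations, the left-hand side should reduce to the two-term bracket of $\Theta_{1,3}$ together with the two correction quotients, which finishes the proof.

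The main obstacle is this last step: tracking nine theta quotients across three distinct moduli and choosing the grouping so that every Weierstrass application is transparent. This is precisely the computation carried out for general $n$ in \cite{HM}, which is why specializing their Theorem 1.10 is the efficient route. As a safeguard I would first check the reduced theta identity to several powers of $q$ with $x$ and $y$ kept formal, and cross-check the evaluations $f_{1,4,1}(q,q,q)$ and $f_{1,4,1}(q^{2},q,q)$ using the shift and flip relations \eqref{equation:f-shift}, \eqref{equation:f-flip}.
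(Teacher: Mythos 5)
Your first route---quoting \cite[Theorem~1.10]{HM} at $n=1$---is exactly what the paper does: it states this theorem as a specialization of that result and offers no further proof, so on this point your approach and the paper's coincide. Your supplementary ``self-contained'' derivation is correctly set up as far as it goes: Theorem~\ref{theorem:masterFnp} with $p=3$ together with the change-of-$z$ relation \eqref{equation:changing-z-theorem} in base $q^{15}$ (each correction quotient indeed carrying $J_{15}^3$ and a $j(-1;q^{15})=\overline{J}_{0,15}$ in the denominator) does reduce the claim to the theta identity you state. However, that identity---collapsing the nine summands of the $\theta_p(x,y,q)$ of Theorem~\ref{theorem:masterFnp} at $p=3$, spread over the moduli $q^{9}$ and $q^{45}$, into the two-term bracket of $\Theta_{1,3}$---is the entire substance of \cite[Theorem~1.10]{HM}, and you only describe a plan for it (clearing denominators, grouping by $r-s$ modulo $3$, repeated Weierstrass three-term relations) rather than carry it out. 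As a standalone argument the proposal therefore has a gap at precisely the step you yourself flag; the citation remains the actual proof, and on that reading you match the paper.
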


We can also specialize \cite[Theorem 1.11]{HM} to $n=1$ to have
\begin{theorem}  \label{theorem:genfn4} Let $n$ be a positive odd integer. For generic $x,y\in\mathbb{C}^{*}$
{\allowdisplaybreaks \begin{align*}
f_{1,5,1}(x,y,q)=g_{1,5,1}(x,y,q,y/x,x/y)-\Theta_{1,4}(x,y,q),
\end{align*}
where
\begin{align}
g_{1,5,1}(x,y,q,y/x,x/y)
&:=j(y;q)m\Big (-q^{14}xy^{-5},q^{24},y/x\Big )\\
&\ \ \ \ \ + j(x;q)m\Big (-q^{14}yx^{-5},q^{24},x/y\Big ).\notag
\end{align}
and
\begin{align*}
\Theta_{1,4}(x,y,q):=&\frac{qxy}{j(-q^{10}x^4;q^{24})j(-q^{10}y^4;q^{24})}\Big \{ J_{4,16}  S_1-qJ_{8,16} S_2\Big \},
\end{align*}
with
\begin{align*}
S_1:&=\frac{j(q^{22}x^2y^2,-q^{12}y/x;q^{24})j(q^{5}xy;q^{12})}{J_{12}^3J_{48}} \\
& \cdot  \Big \{j(-q^{10}x^2y^2,q^{12}y^2/x^2;q^{24})J_{24}^2\\
&\ \ \ \ +\frac{q^{5}x^2 j(-q^{22}x^2y^2;q^{24})j(q^{12}y/x,-y/x;q^{24})^2}{J_{24}}\Big \},\\
S_2:&=\frac{j(q^{10}x^2y^2,-y/x;q^{24})j(q^{11}xy;q^{12})}{J_{12}^2} \\
& \cdot  \Big \{\frac{q^{2}j(-q^{10}x^2y^2,q^{12}y^2/x^2;q^{24})J_{48}}{yJ_{24} }\\ 
&\ \ \ \ +\frac{qxj(-q^{22}x^2y^2;q^{24})j(q^{24}y^2/x^2;q^{48})^2}{J_{48} }\Big \}.
\end{align*}}%
\end{theorem}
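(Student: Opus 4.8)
The statement is the $n=1$ case of \cite[Theorem~1.11]{HM}, so the shortest proof is simply to specialize that result; to give a self-contained argument built on Section~\ref{section:prelim} I would proceed in three steps. First I would apply Theorem~\ref{theorem:masterFnp} with $p=4$: using $\binom{6}{2}-1=14$, $4^{2}-1=24$ and $p(2+p)=24$, this produces
\[
f_{1,5,1}(x,y,q)=g_{1,5,1}(x,y,q,-1,-1)+\frac{1}{\overline{J}_{0,24}}\,\theta_{4}(x,y,q),
\]
where $g_{1,5,1}(x,y,q,-1,-1)$ is (\ref{equation:mdef-2}) with $b=5$ and $z_{1}=z_{0}=-1$, and $\theta_{4}(x,y,q)$ is the explicit double sum over $0\le r,s\le 3$ of theta quotients with moduli $q^{96}=q^{16\cdot 6}$ and $q^{16}$.

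Next I would move the auxiliary parameters of the two Appell--Lerch functions from $-1$ to $y/x$ and to $x/y$. Applying the change-of-$z$ identity (\ref{equation:changing-z-theorem}) with base $q^{24}$ to $m\big(-q^{14}xy^{-5},q^{24},-1\big)$ and to $m\big(-q^{14}yx^{-5},q^{24},-1\big)$ rewrites them as $m\big(-q^{14}xy^{-5},q^{24},y/x\big)$ and $m\big(-q^{14}yx^{-5},q^{24},x/y\big)$, each plus a single theta quotient; note that the factor $j(-1;q^{24})=\overline{J}_{0,24}$ that appears in the resulting denominator matches the prefactor of $\theta_{4}$, which is encouraging. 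After multiplying the two corrections by $j(y;q)$ and $j(x;q)$ and adding, I would arrive at
\[
f_{1,5,1}(x,y,q)=g_{1,5,1}(x,y,q,y/x,x/y)+E(x,y,q)+\frac{1}{\overline{J}_{0,24}}\,\theta_{4}(x,y,q),
\]
with $E(x,y,q)$ an explicit sum of two theta quotients, symmetric under $x\leftrightarrow y$.

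The third step, and the crux, is to prove the purely theta-function identity
\[
E(x,y,q)+\frac{1}{\overline{J}_{0,24}}\,\theta_{4}(x,y,q)=-\Theta_{1,4}(x,y,q).
\]
Here the plan is first to reconcile the moduli, converting the $q^{96}$-modulus theta functions in the sixteen summands of $\theta_{4}$ to the moduli $q^{48},q^{24},q^{16},q^{12}$ occurring in $\Theta_{1,4}$ via the dissection identities (\ref{equation:1.10})--(\ref{equation:1.12}) and the splitting formula (\ref{equation:j-split}); and then to sort the summands---together with the two terms of $E$---into two families, one assembling the $J_{4,16}S_{1}$ part of $\Theta_{1,4}$ and the other the $qJ_{8,16}S_{2}$ part. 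Within each family I expect that repeated use of the Weierstrass three-term relation (Proposition~\ref{proposition:Weierstrass}) and of (\ref{equation:H1Thm1.1})--(\ref{equation:H1Thm1.2B}) merges the theta products, with the pairing of summands producing the two-term brackets inside $S_{1}$ and $S_{2}$ and the common denominator $j(-q^{10}x^{4};q^{24})j(-q^{10}y^{4};q^{24})$ emerging from the Weierstrass steps.

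The main obstacle is this last step: it is a long but essentially mechanical theta-function computation, and the real difficulty is the bookkeeping---tracking exponents of $q,x,y$ and signs through the many applications of Proposition~\ref{proposition:Weierstrass}, (\ref{equation:j-split}) and (\ref{equation:1.10})--(\ref{equation:1.12}), and spotting which grouping of the sixteen $\theta_{4}$-terms telescopes. I would guard against slips by exploiting the manifest $x\leftrightarrow y$ symmetry of both sides (which halves the work), by checking the flip relation (\ref{equation:f-flip}) and the shift relation (\ref{equation:f-shift}) for $(a,b,c)=(1,5,1)$, and by comparing the first several coefficients of the $q$-expansions of the two sides after a convenient specialization such as $x=y=q$. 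If one instead simply invokes \cite[Theorem~1.11]{HM}, this last step is already done there and the proof reduces to the specialization $n=1$.
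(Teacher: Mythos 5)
Your primary route---observing that the statement is precisely the $n=1$ specialization of \cite[Theorem~1.11]{HM}---is exactly what the paper does: the theorem is introduced there with no further argument beyond that citation, so this is a complete proof and matches the paper's approach. Your supplementary self-contained sketch (Theorem~\ref{theorem:masterFnp} with $p=4$, then shifting $z$ via (\ref{equation:changing-z-theorem}), then a closing theta identity) is a plausible reconstruction of the machinery inside \cite{HM}, but its third step is left unexecuted; since the citation already suffices, that gap is immaterial here.
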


\begin{proposition}\cite[Proposition 8.1]{HM}\label{proposition:prop-singshift}  Let $\ell\in \mathbb{Z}$, $p\in \{ 1,2,3,4\}$, $n\in \mathbb{N}$ with $(n,p)=1$.  For generic $x,y\in\mathbb{C}^*$
\begin{equation*}
f_{1,1+p,1}(x,y,q)=g_{1,1+p,1}(x,y,q,q^{\ell p}y/x,q^{-\ell p}x/y)-
(-x)^{\ell}q^{\binom{\ell}{2}}\Theta_{1,p}(q^{\ell }x,q^{\ell (1+p)}y,q).
\end{equation*}
\end{proposition}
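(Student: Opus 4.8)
The plan is to deduce the identity for general $\ell$ from its $\ell=0$ instance, using the shift formula (\ref{equation:f-shift}) for Hecke-type double-sums together with the functional equations (\ref{equation:j-elliptic}) and (\ref{equation:mxqz-fnq-x}). Write $b:=1+p$, so the modulus occurring in $g_{1,1+p,1}$ is $Q:=q^{b^{2}-1}=q^{p(p+2)}$, and recall from (\ref{equation:mdef-2}) that
\begin{equation*}
g_{1,b,1}(x,y,q,z_1,z_0)=j(y;q)\,m(A,Q,z_1)+j(x;q)\,m(B,Q,z_0),
\end{equation*}
where $A:=q^{\binom{b+1}{2}-1}x(-y)^{-b}$ and $B:=q^{\binom{b+1}{2}-1}y(-x)^{-b}$. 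The case $\ell=0$ of the proposition is exactly Theorem \ref{theorem:genfn2}, \ref{theorem:genfn3}, or \ref{theorem:genfn4} for $p=2,3,4$; for $p=1$ it follows from (\ref{equation:f121}) after using (\ref{equation:changing-z-theorem}) to convert the parameter $-1$ there into $y/x$, the rewriting that pins down $\Theta_{1,1}$. So I may assume $\ell\ge1$; the case $\ell<0$ is entirely analogous, using the $R<0$ form of (\ref{equation:f-shift}).

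First I would apply (\ref{equation:f-shift}) with $(a,b,c)=(1,1+p,1)$, $R=\ell$, and $S=0$, which gives
\begin{equation*}
f_{1,1+p,1}(x,y,q)=(-x)^{\ell}q^{\binom{\ell}{2}}f_{1,1+p,1}(q^{\ell}x,q^{(1+p)\ell}y,q)+\sum_{k=0}^{\ell-1}(-x)^{k}q^{\binom{k}{2}}j(q^{(1+p)k}y;q).
\end{equation*}
Applying the $\ell=0$ base formula to $f_{1,1+p,1}(q^{\ell}x,q^{(1+p)\ell}y,q)$, I observe that its two $z$-parameters become $q^{(1+p)\ell}y/(q^{\ell}x)=q^{p\ell}y/x$ and the reciprocal $q^{-p\ell}x/y$, and that the theta contribution is exactly $-(-x)^{\ell}q^{\binom{\ell}{2}}\Theta_{1,p}(q^{\ell}x,q^{(1+p)\ell}y,q)$, which already matches the claimed right-hand side. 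Hence the proposition is reduced to the purely Appell--Lerch identity
\begin{multline*}
g_{1,1+p,1}(x,y,q,q^{p\ell}y/x,q^{-p\ell}x/y)\\
=(-x)^{\ell}q^{\binom{\ell}{2}}g_{1,1+p,1}(q^{\ell}x,q^{(1+p)\ell}y,q,q^{p\ell}y/x,q^{-p\ell}x/y)+\sum_{k=0}^{\ell-1}(-x)^{k}q^{\binom{k}{2}}j(q^{(1+p)k}y;q),
\end{multline*}
where now the two $z$-parameters are the \emph{same} on both sides.

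To prove this I would treat the two $m$-summands of $g_{1,b,1}$ separately. Under $(x,y)\mapsto(q^{\ell}x,q^{b\ell}y)$ one checks directly that $B$ is invariant while $A\mapsto Q^{-\ell}A$; together with the elliptic transformations $j(q^{\ell}x;q)=(-1)^{\ell}q^{-\binom{\ell}{2}}x^{-\ell}j(x;q)$ and $j(q^{b\ell}y;q)=(-1)^{b\ell}q^{-\binom{b\ell}{2}}y^{-b\ell}j(y;q)$ from (\ref{equation:j-elliptic}), the $B$-term on the right reproduces $j(x;q)m(B,Q,z_0)$ exactly (the two sign factors combine to $(-1)^{2\ell}=1$). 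For the $A$-term, iterating (\ref{equation:mxqz-fnq-x}) $\ell$ times and rearranging yields
\begin{equation*}
m(A,Q,z_1)=\sum_{k=0}^{\ell-1}(-1)^{k}Q^{-\binom{k+1}{2}}A^{k}+(-1)^{\ell}Q^{-\binom{\ell+1}{2}}A^{\ell}\,m(Q^{-\ell}A,Q,z_1),
\end{equation*}
so that $m(Q^{-\ell}A,Q,z_1)$ is $m(A,Q,z_1)$ times an explicit monomial in $q,x,y$ plus an explicit finite sum of monomials. Substituting this back, a bookkeeping of the $q$-, $x$- and $y$-exponents shows that the coefficient of $j(y;q)m(A,Q,z_1)$ collapses to $1$, while the residual finite sum equals $j(y;q)\sum_{k=0}^{\ell-1}(-1)^{k}Q^{-\binom{k+1}{2}}A^{k}$; expanding $A=q^{\binom{b+1}{2}-1}x(-y)^{-b}$, $Q=q^{b^{2}-1}$, and using (\ref{equation:j-elliptic}) once more in the form $j(q^{bk}y;q)=(-1)^{bk}q^{-\binom{bk}{2}}y^{-bk}j(y;q)$, this matches $\sum_{k=0}^{\ell-1}(-x)^{k}q^{\binom{k}{2}}j(q^{(1+p)k}y;q)$ term by term, by the elementary exponent identity $\binom{k}{2}-\binom{bk}{2}=k(\binom{b+1}{2}-1)-(b^{2}-1)\binom{k+1}{2}$. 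This completes the reduction, hence the proof.

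The step I expect to be the main obstacle is purely computational: carrying the three families of exponents through the two substitutions and the iteration of (\ref{equation:mxqz-fnq-x}) without slip, and confirming that they vanish in each of the several places where they must. A secondary point is the $p=1$ normalization: one must first record the conversion of (\ref{equation:f121}) into the shape $f_{1,2,1}(x,y,q)=g_{1,2,1}(x,y,q,y/x,x/y)-\Theta_{1,1}(x,y,q)$ via (\ref{equation:changing-z-theorem}); with that in hand the argument runs uniformly in $p\in\{1,2,3,4\}$, and the case $\ell<0$ follows by the same manipulations applied to the $R<0$ form of (\ref{equation:f-shift}) (or, if one prefers, by first invoking the reflection (\ref{equation:f-flip})).
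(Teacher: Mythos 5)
Your proposal is correct. Note that the paper itself offers no proof of Proposition \ref{proposition:prop-singshift}---it is quoted verbatim from [HM, Proposition 8.1]---so there is no internal argument to compare against; but your derivation (shift $f_{1,1+p,1}$ by $(R,S)=(\ell,0)$ via (\ref{equation:f-shift}), apply the $\ell=0$ base case, and absorb the boundary sum $\sum_{k=0}^{\ell-1}(-x)^kq^{\binom{k}{2}}j(q^{(1+p)k}y;q)$ into the Appell--Lerch terms by iterating $m(qx,q,z)=1-xm(x,q,z)$ together with (\ref{equation:j-elliptic})) is exactly the standard mechanism behind such ``shift'' propositions, and I have checked the key exponent identity $\binom{k}{2}-\binom{bk}{2}=k\bigl(\binom{b+1}{2}-1\bigr)-(b^2-1)\binom{k+1}{2}$ and the invariance $B\mapsto B$, $A\mapsto Q^{-\ell}A$: they hold. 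The two points you flag as needing care---the $p=1$ normalization of $\Theta_{1,1}$ via (\ref{equation:changing-z-theorem}) (the paper only states the $z=-1$ form (\ref{equation:f121}) for $p=1$) and the sum convention in (\ref{equation:f-shift}) for $R<0$---are real but routine, and the negative-$\ell$ case can alternatively be obtained by applying your established positive-shift identity to $f_{1,1+p,1}(q^{\ell}x,q^{(1+p)\ell}y,q)$ with shift $-\ell$.
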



\subsection{The general integral-level string function}\label{section:Level-gen-N}  

We recall the notation 
\begin{equation}
s(m,\ell,N):=\frac{(\ell+1)^2}{4(N+2)}-\frac{m^2}{4N}-\frac{1}{8}
\end{equation}
and the fact that \cite{SW}:
\begin{equation}
\mathcal{C}_{m,\ell}^{N}(q):=q^{-s(m,\ell,N)}C_{m,\ell}^{N}(q).
\end{equation}
The following is a straightforward consequence of the symmetry relations (\ref{equation:string-symmetry-1}) - (\ref{equation:string-symmetry-3}).
\begin{lemma} \label{lemma:masterLemma} If
\begin{equation}
f_{m,\ell}^{N}(q):=q^{-\tfrac{1}{4N}(m^2-\ell^2)}\mathcal{C}_{m,\ell}^{N}(q),
\end{equation}
then
\begin{equation}
f_{m,\ell}^{N}(q)=f_{-m,\ell}^{N}(q)=f_{2N-m,\ell}^{N}(q)=f_{N-m,N-\ell}^{N}(q).
\end{equation}
\end{lemma}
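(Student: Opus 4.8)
The plan is to deduce Lemma~\ref{lemma:masterLemma} directly from the three string-function symmetries (\ref{equation:string-symmetry-1})--(\ref{equation:string-symmetry-3}) by tracking the effect of the normalization factors $q^{-s(m,\ell,N)}$ and $q^{-\frac{1}{4N}(m^2-\ell^2)}$. First I would unwind the definitions: combining (\ref{equation:gen-fabc-SW}) (equivalently the identity $\mathcal{C}_{m,\ell}^N(q)=q^{-s(m,\ell,N)}C_{m,\ell}^N(q)$ recalled in Section~\ref{section:Level-gen-N}) with the defining relation for $f_{m,\ell}^N$, one gets
\begin{equation*}
f_{m,\ell}^N(q)=q^{-\frac{1}{4N}(m^2-\ell^2)}q^{-s(m,\ell,N)}C_{m,\ell}^N(q).
\end{equation*}
Now observe from (\ref{equation:s-def}) that $s(m,\ell,N)+\frac{1}{4N}(m^2-\ell^2)=-\frac18+\frac{(\ell+1)^2}{4(N+2)}-\frac{\ell^2}{4N}$, which is \emph{independent of $m$}. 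Call this quantity $t(\ell,N)$; then $f_{m,\ell}^N(q)=q^{-t(\ell,N)}C_{m,\ell}^N(q)$.

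With that reduction in hand, the three claimed equalities become immediate. For $f_{m,\ell}^N=f_{-m,\ell}^N$: both sides equal $q^{-t(\ell,N)}$ times $C_{m,\ell}^N$ and $C_{-m,\ell}^N$ respectively, and these are equal by (\ref{equation:string-symmetry-1}); the prefactor $t(\ell,N)$ is unchanged since $\ell$ and $N$ are fixed. The same argument gives $f_{m,\ell}^N=f_{2N-m,\ell}^N$ from (\ref{equation:string-symmetry-2}), again because $\ell,N$ are untouched. For the third equality $f_{m,\ell}^N=f_{N-m,N-\ell}^N$ one uses (\ref{equation:string-symmetry-3}), $C_{m,\ell}^N=C_{N-m,N-\ell}^N$, so it remains only to check the prefactor matches, i.e. $t(\ell,N)=t(N-\ell,N)$. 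Here the point is that I deliberately factored the $q$-powers so that the \emph{remaining} exponent depends only on $(\ell,N)$; I would verify $t(\ell,N)=t(N-\ell,N)$ by a one-line computation: $\frac{((N-\ell)+1)^2}{4(N+2)}-\frac{(N-\ell)^2}{4N}=\frac{(N+2-(\ell+1))^2}{4(N+2)}-\frac{(N-\ell)^2}{4N}$, expand, and the cross terms cancel against the leading terms, leaving exactly $\frac{(\ell+1)^2}{4(N+2)}-\frac{\ell^2}{4N}$.

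The only genuine obstacle is this last bookkeeping identity $t(\ell,N)=t(N-\ell,N)$; everything else is a direct substitution of the symmetry relations. It is worth noting that the ``correction'' term $-\frac{1}{4N}(m^2-\ell^2)$ in the definition of $f_{m,\ell}^N$ is engineered precisely so that the $m$-dependence of the exponent disappears — that is what makes the first two symmetries transparent — while the residual $\ell$-dependence is symmetric under $\ell\mapsto N-\ell$, matching the third symmetry. So the write-up is: (i) show $f_{m,\ell}^N(q)=q^{-t(\ell,N)}C_{m,\ell}^N(q)$ with $t(\ell,N):=-\frac18+\frac{(\ell+1)^2}{4(N+2)}-\frac{\ell^2}{4N}$; (ii) apply (\ref{equation:string-symmetry-1}) and (\ref{equation:string-symmetry-2}) verbatim; (iii) apply (\ref{equation:string-symmetry-3}) together with the check $t(\ell,N)=t(N-\ell,N)$.
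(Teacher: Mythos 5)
Your proof is correct and is exactly the argument the paper intends: the paper states the lemma as a ``straightforward consequence'' of the symmetries (\ref{equation:string-symmetry-1})--(\ref{equation:string-symmetry-3}) without writing out details, and your reduction to $f_{m,\ell}^{N}(q)=q^{-t(\ell,N)}C_{m,\ell}^{N}(q)$ with $t(\ell,N)$ independent of $m$ and symmetric under $\ell\mapsto N-\ell$ is precisely the bookkeeping that makes it so.
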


\section{Computing the general level $N=1$ string function}  \label{section:LevelN1-general}

Let us set $N=1$.  Here $\ell\in \{0,1\}$, $0\le m<2$, $m\equiv \ell \pmod 2$.   The proof of Theorem \ref{theorem:Level1-theorem} follows from a lemma, whose proof we do now.

\begin{lemma}\label{lemma:f121-evaluations} We have
\begin{equation}
f_{1,2,1}(q,q,q)=J_{1}^2.\label{equation:f121-0}
\end{equation}
\end{lemma}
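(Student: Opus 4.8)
The plan is to evaluate $f_{1,2,1}(q,q,q)$ directly from the $p=1$ double-sum formula, which in the excerpt is displayed as equation (\ref{equation:f121}) (equivalently the Corollary following Theorem \ref{theorem:masterFnp}). Setting $x=y=q$ collapses the two Appell--Lerch terms into a single pair and collapses the theta-quotient, so the computation reduces to (a) identifying the two $m$-terms and (b) simplifying the last theta quotient.

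First I would substitute $x=y=q$ into (\ref{equation:f121}). The first two terms become $j(q;q)\,m(q^2/q,q^3,-1)$ twice, i.e. $2\,j(q;q)\,m(q,q^3,-1)$, and since $j(q^n;q)=0$ for $n\in\Z$ (noted right after the definition of $j$), we have $j(q;q)=0$, so both Appell--Lerch contributions vanish regardless of the (finite) value of $m(q,q^3,-1)$. This is exactly the "$0+0$" seen in the displayed sample calculation (\ref{equation:f121-calculation}).

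Next I would simplify the theta quotient. With $x=y=q$ the numerator's $j(-x/y;q)=j(-1;q)=\overline{J}_{0,1}$ and $j(q^2xy;q^3)=j(q^4;q^3)=J_{4,3}$; the denominator's $\overline{J}_{0,3}$ stays, and $j(-qy^2/x;q^3)=j(-q^2;q^3)=\overline{J}_{2,3}$ and likewise $j(-qx^2/y;q^3)=\overline{J}_{2,3}$. Also $y=q$ and $J_3^3$ out front. So the last term of (\ref{equation:f121}), with its overall minus sign, becomes $-\dfrac{q\,J_3^3\,\overline{J}_{0,1}J_{4,3}}{\overline{J}_{0,3}\,\overline{J}_{2,3}^2}$, which is precisely the expression appearing in (\ref{equation:f121-calculation}). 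The remaining task is to show this equals $J_1^2$ (note the sign: the stated answer $+J_1^2$ must come out, so the product rearrangement should produce an extra sign from, e.g., $J_{4,3}=j(q^4;q^3)=-q^{-1}j(q;q^3)=-q^{-1}J_1$ via the ellipticity relation (\ref{equation:j-elliptic})). I would reduce everything to infinite products: use $\overline{J}_{0,1}=2J_2^2/J_1$, $\overline{J}_{0,3}=2J_6^2/J_3$, rewrite $\overline{J}_{2,3}=j(-q^2;q^3)$ and $J_{4,3}=j(q^4;q^3)$ as eta-type products via Jacobi triple product, cancel the powers of $q$ and the $J_3$, $J_6$ factors, and collect to $J_1^2$.

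The main obstacle is purely bookkeeping: tracking the stray power of $q$ and the sign through the ellipticity shifts $j(q^n x;q)=(-1)^n q^{-\binom n2}x^{-n}j(x;q)$ and the triple-product expansions of $\overline{J}_{2,3}^2$ in the denominator against $\overline{J}_{0,1}J_{4,3}J_3^3$ in the numerator, so that the $q$-powers cancel exactly and the sign lands on $+J_1^2$ rather than $-J_1^2$. There is no conceptual difficulty — the vanishing of the Appell--Lerch part is immediate from $j(q;q)=0$ — so once the product identity is checked the lemma, and with it Theorem \ref{theorem:Level1-theorem}, follows.
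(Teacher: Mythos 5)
Your proposal is correct and follows exactly the paper's own route: the paper's proof of Lemma \ref{lemma:f121-evaluations} simply points to the displayed calculation (\ref{equation:f121-calculation}), i.e.\ the same substitution $x=y=q$ into (\ref{equation:f121}), the vanishing of both Appell--Lerch terms via $j(q;q)=0$, and the product rearrangement of the remaining theta quotient to $J_1^2$. Your sign bookkeeping via $J_{4,3}=-q^{-1}J_{1,3}=-q^{-1}J_1$ is the right way to see why the quotient lands on $+J_1^2$.
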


\begin{proof}[Proof of Lemma \ref{lemma:f121-evaluations}]  This is just the calculation found in (\ref{equation:f121-calculation}).
\end{proof}

\begin{proof}[Proof of Theorem \ref{theorem:Level1-theorem}]  For $N=1$ we only need to be concerned with $\ell\in\{0,1\}$, $0\le m <2$, $m\equiv \ell \pmod 2$.  Which means we only need to compute the string functions for the $(\ell,m)$ tuples $(0,0)$ and $(1,1)$.  For the case $(\ell,m)=(0,0)$, we have
\begin{equation*}
q^{-\frac{1}{4}(m^2-\ell^2)}J_{1}^3\mathcal{C}_{m,\ell}^{1}(q)
=J_{1}^3\mathcal{C}_{0,0}^{1}(q)
=f_{1,2,1}(q,q,q)
=J_{1}^2,
\end{equation*}
where the last equality follows from Lemma \ref{lemma:f121-evaluations}.  The case $(\ell,m)=(1,1)$ follows from Lemma \ref{lemma:masterLemma}.
\end{proof}

\section{Computing the general level $N=2$ string function}\label{section:LevelN2-general} 
Here $N=2$, $\ell\in\{0,1,2\}$, $0\le m < 4$, and $m\equiv \ell \pmod 2$.    We begin with a proposition.

\begin{proposition}\label{proposition:f131-evaluations} We have
{\allowdisplaybreaks \begin{align}
f_{1,3,1}(q,q,q)&=J_{1,2}\overline{J}_{3,8},\label{equation:f131-0}\\
f_{1,3,1}(q^2,q,q)&=J_{1}J_{2},\label{equation:f131-1}\\
f_{1,3,1}(q^2,q^2,q)&=J_{1,2}\overline{J}_{1,8}.\label{equation:f131-2}
\end{align}}
\end{proposition}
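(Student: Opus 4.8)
The plan is to evaluate each of the three specializations of $f_{1,3,1}$ by invoking Theorem~\ref{theorem:genfn2}, which writes
\[
f_{1,3,1}(x,y,q)=g_{1,3,1}(x,y,q,y/x,x/y)-\Theta_{1,2}(x,y,q),
\]
and then substituting $x,y\in\{q,q^2\}$ as needed. The key observation that makes this tractable is that at these special values many of the theta quotients collapse: in $g_{1,3,1}$ the Appell--Lerch arguments become of the form $m(q^{a},q^{b},q^{c})$ with $q^{c}$ (the $z$-argument) reducing—after the functional equations \eqref{equation:mxqz-fnq-z}, \eqref{equation:mxqz-fnq-x}, \eqref{equation:mxqz-flip}—to the evaluable cases of Corollary~\ref{corollary:mxqz-eval}, namely $m(q,q^2,-1)=1/2$ and $m(-1,q^2,q)=0$. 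For instance in \eqref{equation:f131-0}, setting $x=y=q$ gives $y/x=x/y=1$, each $j(q;q)=0$ kills the leading theta factors, but the Appell--Lerch terms of the shape $j(q;q)m(\cdots)$ must be handled as a limit; more cleanly, one first uses \eqref{equation:changing-z-theorem} to shift $z$ away from the singular value before specializing, exactly as in the model computation \eqref{equation:f121-calculation}.

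Concretely, for each of the three cases I would: (i) write out $g_{1,3,1}(x,y,q,y/x,x/y)$ and $\Theta_{1,2}(x,y,q)$ explicitly from Theorem~\ref{theorem:genfn2}; (ii) substitute the given $(x,y)$ and simplify the resulting $j(q^a;q^b)$ factors using \eqref{equation:j-elliptic} and \eqref{equation:1.7} (and noting $j(q^n;q)=0$) to see which Appell--Lerch contributions survive; (iii) reduce the surviving $m(x,q,z)$ to $0$ or $1/2$ via Corollary~\ref{corollary:mxqz-eval}, handling the $j(q^a;q)m(\cdots)$ "$0\cdot\infty$" products by an application of \eqref{equation:changing-z-theorem} as in \eqref{equation:f121-calculation}; and (iv) simplify the remaining $\Theta_{1,2}$ quotient of theta functions, using the product rearrangements of Section~\ref{section:prelim} together with Weierstrass (Proposition~\ref{proposition:Weierstrass}) if a three-term reduction is needed, to arrive at $J_{1,2}\overline{J}_{3,8}$, $J_1J_2$, and $J_{1,2}\overline{J}_{1,8}$ respectively. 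An alternative, possibly cleaner, route for \eqref{equation:f131-0} and \eqref{equation:f131-2} is to use Proposition~\ref{proposition:prop-singshift} with a suitable $\ell$ to land on a non-singular $z$-value before specializing, thereby avoiding the limiting argument entirely.

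The main obstacle I anticipate is bookkeeping in the $\Theta_{1,2}$ term: after substitution one gets a ratio like
\[
\Theta_{1,2}(q,q,q)=\frac{q^{3}J_{2,4}J_{8,16}\,j(q^{5};q^{8})\,j(q^{0};q^{16})}{j(-q^{5};q^8)\,j(-q^{5};q^8)},
\]
and one must be careful that $j(q^0;q^{16})=j(1;q^{16})=0$—so in fact for $(x,y)=(q,q)$ the $\Theta_{1,2}$ piece vanishes and the entire answer comes from $g_{1,3,1}$, whereas for $(x,y)=(q^2,q)$ and $(q^2,q^2)$ the balance between the two pieces is different. Getting these vanishing/non-vanishing patterns right, and then matching the leftover theta products to the compact forms $\overline{J}_{3,8}$ and $\overline{J}_{1,8}$ (which requires recognizing $j(-q^{5};q^{16})=j(-q^{3};q^{16})=\overline{J}_{3,16}$-type identities via \eqref{equation:j-elliptic} and possibly a $q^2\to q$ rescaling), is where the real care is needed; everything else is mechanical substitution into results already quoted in the excerpt.
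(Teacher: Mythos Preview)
Your overall strategy is the paper's, but what you label the ``alternative, possibly cleaner'' route via Proposition~\ref{proposition:prop-singshift} is in fact the paper's primary (and only) tool, and your direct-substitution analysis contains an error that inverts the picture. The paper first applies Proposition~\ref{proposition:prop-singshift} with a shift $\ell=k$ (taking $k=1,0,1$ for the three identities respectively), replacing the $z$-arguments $y/x,\,x/y$ by $q^{2k}y/x,\,q^{-2k}x/y$. After this shift the Appell--Lerch contributions $j(y;q)m(\cdots)$ and $j(x;q)m(\cdots)$ are perfectly well-defined and vanish outright because $j(q;q)=j(q^2;q)=0$. The \emph{entire} answer in each case comes from the shifted theta quotient, which collapses via \eqref{equation:j-elliptic}, \eqref{equation:1.7}, and elementary product rearrangements; no Weierstrass identity, no Corollary~\ref{corollary:mxqz-eval}, and no limiting argument is used.

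Your explicit computation of $\Theta_{1,2}(q,q,q)$ has an arithmetic slip: with $x=y=q$ one has $q^{2}/(x^{2}y^{2})=q^{-2}$, so the numerator carries $j(q^{-2};q^{16})\neq 0$, not $j(1;q^{16})=0$. Hence your conclusion that ``the $\Theta_{1,2}$ piece vanishes and the entire answer comes from $g_{1,3,1}$'' is exactly backwards. If you insist on the unshifted decomposition of Theorem~\ref{theorem:genfn2}, the $g$ part is genuinely singular at $x=y=q$ (since $z=y/x=1$ kills $j(z;q^{8})$ in the denominator of $m$), and sorting out that $0\cdot\infty$ via \eqref{equation:changing-z-theorem} amounts to performing the shift anyway. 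Taking Proposition~\ref{proposition:prop-singshift} as the starting point, not an afterthought, removes the obstacle and makes all three evaluations routine.
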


\begin{proof}[Proof of Proposition \ref{proposition:f131-evaluations}]  We use Theorem \ref{theorem:genfn2} and Proposition \ref{proposition:prop-singshift} with the specialization $n=1$:
\begin{align}
f_{1,3,1}(x,y,q)&=j(y;q)m(-q^5x/y^3,q^8,q^{2k}y/x)+j(x;q)m(-q^5y/x^3,q^8,x/q^{2k}y)
\label{equation:f131-genk}\\
&\ \ \ \ \ -(-1)^kq^{4k+1+\binom{k}{2}}x^{k+1}y
\frac{J_{2,4}J_{8,16}j(q^{4k+3}xy;q^{8})j(q^{8k+14}x^2y^2;q^{16})}
{j(-q^{2k+3}x^2;q^{8})j(-q^{6k+3}y^2;q^{8 })}.\notag
\end{align}  

We prove (\ref{equation:f131-0}).  We use (\ref{equation:f131-genk}) with $k=1$ to have
\begin{align*}
f_{1,3,1}(q,q,q)&=j(q;q)m(-q^3,q^8,q^{2})+j(q;q)m(-q^3,q^8,q^{-2})\\
&\ \ \ \ \ +q^{8}
\frac{J_{2,4}J_{8,16}j(q^{9};q^{8})j(q^{26};q^{16})}
{j(-q^{7};q^{8})j(-q^{11};q^{8 })}.
\end{align*}
The $m(x,q,z)$ terms are defined, and their theta coefficients are both zero.  Hence
\begin{equation*}
f_{1,3,1}(q,q,q)
=q^{8}
\frac{J_{2,4}J_{8,16}j(q^{9};q^{8})j(q^{26};q^{16})}
{j(-q^{7};q^{8})j(-q^{11};q^{8 })}=\frac{J_{2,4}J_{8,16}J_{1,8}J_{10,16}}
{\overline{J}_{7,8}\overline{J}_{3,8}}
=J_{1,2}\overline{J}_{3,8},
\end{equation*}
where we have used (\ref{equation:j-elliptic}), (\ref{equation:1.7}), and elementary product rearrangements.

We prove (\ref{equation:f131-1}).  We use (\ref{equation:f131-genk}) with $k=0$ to have
{\allowdisplaybreaks \begin{align*}
f_{1,3,1}(q^2,q,q)&=j(q;q)m(-q^4,q^8,q^{-1})+j(q^2;q)m(-1,q^8,q)\\
&\ \ \ \ \ -q^4
\frac{J_{2,4}J_{8,16}j(q^{6};q^{8})j(q^{20};q^{16})}
{j(-q^{7};q^{8})j(-q^{5};q^{8 })}\\
&=\frac{J_{2,4}J_{8,16}J_{2,8}J_{4,16}}
{\overline{J}_{1,8}\overline{J}_{5,8}}\\
&=J_{1}J_{2},
\end{align*}}%
where we have used (\ref{equation:j-elliptic}), (\ref{equation:1.7}), and elementary product rearrangements.

We prove (\ref{equation:f131-2}).  We use (\ref{equation:f131-genk}) with $k=1$ to have
{\allowdisplaybreaks \begin{align*}
f_{1,3,1}(q^2,q^2,q)&=j(q^2;q)m(-q,q^8,q^{2})+j(q^2;q)m(-q,q^8,q^{-2})\\
&\ \ \ \ \ +q^{11}
\frac{J_{2,4}J_{8,16}j(q^{11};q^{8})j(q^{30};q^{16})}
{j(-q^{9};q^{8})j(-q^{13};q^{8 })}\\
&=\frac{J_{2,4}J_{8,16}J_{3,8}J_{2,16}}
{\overline{J}_{1,8}\overline{J}_{5,8}}\\
&=J_{1,2}\overline{J}_{1,8},
\end{align*} }%
where we have used (\ref{equation:j-elliptic}), (\ref{equation:1.7}), and elementary product rearrangements.
\end{proof}

\begin{proof}[Proof of Theorem \ref{theorem:Level2-theorem}]   For $N=2$ we only need to be concerned with $\ell\in\{0,1, 2\}$, $0\le m <4$, $m\equiv \ell \pmod 2$.  Which means we only need to compute the string functions for the $(\ell,m)$-tuples $(0,0), (0,2), (1,1), (1,3), (2,0), (2,2)$.  Because of Lemma \ref{lemma:masterLemma}, we only need to be concerned with the $(\ell,m)$-tuples $(0,0), (1,1), (2,0)$.

For $(\ell,m)=(0,0)$, we have
\begin{equation*}
q^{-\frac{1}{8}(m^2-\ell^2)}J_{1}^3\mathcal{C}_{m,\ell}^{1}(q)
=J_{1}^3\mathcal{C}_{0,0}^{1}(q)
=f_{1,3,1}(q,q,q)
=J_{1,2}\overline{J}_{3,8},
\end{equation*}
where the last equality follows from identity (\ref{equation:f131-0}).  For $(\ell,m)=(1,1)$, we have
\begin{equation*}
q^{-\frac{1}{8}(m^2-\ell^2)}J_{1}^3\mathcal{C}_{m,\ell}^{1}(q)
=J_{1}^3\mathcal{C}_{1,1}^{1}(q)
=f_{1,3,1}(q^2,q,q)
=J_{1}J_{2},
\end{equation*}
where the last equality follows from identity (\ref{equation:f131-1}).  For $(\ell,m)=(2,0)$, we have
\begin{equation*}
q^{-\frac{1}{8}(m^2-\ell^2)}J_{1}^3\mathcal{C}_{m,\ell}^{1}(q)
=q^{\tfrac{1}{2}}J_{1}^3\mathcal{C}_{2,0}^{1}(q)
=q^{\tfrac{1}{2}}f_{1,3,1}(q^2,q^2,q)
=q^{\tfrac{1}{2}}J_{1,2}\overline{J}_{1,8},
\end{equation*}
where the last equality follows from identity (\ref{equation:f131-2}).
\end{proof}


\section{Computing the general level $N=3$ string function}\label{section:LevelN3-general} 
Here $N=3$, $\ell\in\{0,1,2, 3\}$, $0\le m < 6$, and $m\equiv \ell \pmod 2$.    The proof of Theorem \ref{theorem:Level3-theorem} follows from a proposition.

\begin{proposition}\label{proposition:f141-evaluations} We have
\begin{align}
f_{1,4,1}(q,q,q)&=J_{1}\cdot ( J_{8,15}-qJ_{2,15}),\label{equation:f141-evaluation-0}\\
f_{1,4,1}(q^2,q,q)&=J_{1}J_{6,15},\label{equation:f141-evaluation-1}\\
f_{1,4,1}(q^2,q^2,q)&=J_{1}\cdot ( J_{11,15}+qJ_{1,15}),\label{equation:f141-evaluation-2}\\
f_{1,4,1}(q^3,q^2,q)&=J_{1}J_{3,15}.\label{equation:f141-evaluation-3}
\end{align}
\end{proposition}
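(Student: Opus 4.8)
The plan is to evaluate each of the four specializations $f_{1,4,1}(q^a,q^b,q)$ by applying Theorem~\ref{theorem:genfn3} together with the singular-shift Proposition~\ref{proposition:prop-singshift} with $p=4$, $n=1$, exactly in the style of the $N=2$ computation in Proposition~\ref{proposition:f131-evaluations}. First I would write out the general shifted formula: for each $(x,y)=(q^a,q^b)$ I substitute into
\begin{equation*}
f_{1,5,1}\text{-analogue for }p=4:\quad f_{1,4,1}(x,y,q)=g_{1,4,1}(x,y,q,q^{4k}y/x,q^{-4k}x/y)-(-x)^kq^{\binom{k}{2}}\Theta_{1,3}(q^k x,q^{k(1+4)}y,q),
\end{equation*}
choosing the shift parameter $k$ in each of the four cases so that the two Appell--Lerch terms in $g_{1,4,1}$ have theta coefficients $j(q^b;q)$ or $j(q^a;q)$ that vanish (recall $j(q^n;q)=0$ for $n\in\Z$). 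That kills the entire $g_{1,4,1}$ contribution, and one is left with a single explicit theta quotient coming from $\Theta_{1,3}$.

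Next I would simplify that theta quotient. After substituting $x\to q^k x$, $y\to q^{5k}y$ into the definition of $\Theta_{1,3}$ from Theorem~\ref{theorem:genfn3}, every $j(\cdot;q^5)$ and $j(\cdot;q^{15})$ factor becomes $j(q^c;q^5)$ or $j(q^c;q^{15})$ with integer $c$; I reduce the arguments modulo the respective bases using the ellipticity relation (\ref{equation:j-elliptic}) and the symmetry (\ref{equation:1.7}), collect powers of $q$, and convert everything into $J_{a,m}$ notation. The braced factor $\{j(q^{11}x^2y;q^{15})j(q^{11}xy^2;q^{15})-q^4xy\,j(q^{16}x^2y;q^{15})j(q^{16}xy^2;q^{15})\}$ will, after reduction, collapse to something of the shape $J_{r}J_{s,15}$ or $J_r(J_{s,15}\pm q^tJ_{u,15})$; the two summands inside the brace either reinforce (giving the $\pm$ binomials in (\ref{equation:f141-evaluation-0}) and (\ref{equation:f141-evaluation-2})) or one of them vanishes because its argument is an integral power of $q^{15}$ (giving the clean single products in (\ref{equation:f141-evaluation-1}) and (\ref{equation:f141-evaluation-3})). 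The prefactor $qxyJ_3J_{15}/(J_5^2 j(q^6x^3;q^{15})j(q^6y^3;q^{15}))$ simplifies via the product rearrangements in the Preliminaries, and one checks the residual $q$-power matches the $q^{\pm1}$, $q^{\pm 0}$ exponents in the stated evaluations.

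I expect the main obstacle to be bookkeeping rather than conceptual: choosing the correct shift $k$ in each case (it need not be the same $k$ that worked for $f_{1,3,1}$, and for the asymmetric arguments $(q^2,q)$ and $(q^3,q^2)$ one must check that the chosen $k$ makes \emph{at least one} of the two $g$-coefficients vanish while the other term's Appell--Lerch function stays well-defined, i.e. its $z$-parameter is not a power of its $q$-parameter), and then carrying the many nested $j$-reductions modulo $15$ without sign or $q$-exponent errors. A secondary subtlety is that in the two ``binomial'' cases the two terms inside the brace of $\Theta_{1,3}$ land on \emph{different} residues mod $15$, so neither vanishes and one genuinely gets a two-term theta combination; I would verify the final identities (\ref{equation:f141-evaluation-0})--(\ref{equation:f141-evaluation-3}) by comparing low-order $q$-expansions as an independent check. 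Finally, Theorem~\ref{theorem:Level3-theorem} then follows by feeding these four evaluations through (\ref{equation:SW-fabc}) and Lemma~\ref{lemma:masterLemma}, matching the tuples $(\ell,m)=(0,0),(1,1),(2,0),(2,2)$ to $\theta_0,\theta_1,\theta_2,\theta_3$ and using the symmetries to cover the remaining cases.
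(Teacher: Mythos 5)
Your plan is exactly the paper's proof: apply Theorem~\ref{theorem:genfn3} together with Proposition~\ref{proposition:prop-singshift}, choose the shift $k$ case by case so that both Appell--Lerch terms are well-defined while their theta coefficients $j(q^a;q)=j(q^b;q)=0$ kill the $g_{1,4,1}$ contribution, and then reduce the surviving $\Theta_{1,3}$ quotient modulo $15$ (the paper takes $k=1,0,2,1$ for the four cases, and in the two ``clean'' cases one bracket term indeed dies because its argument is an integral power of $q^{15}$). One small correction: for $f_{1,4,1}$ the shift proposition applies with $p=3$, not $p=4$, so the $z$-parameters are $q^{3k}y/x$ and $q^{-3k}x/y$ and the theta term is $\Theta_{1,3}(q^{k}x,q^{4k}y,q)$ --- your exponents $q^{4k}y/x$ and $q^{k(1+4)}y$ are off by one in $p$.
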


\begin{proof}[Proof of Proposition \ref{proposition:f141-evaluations}]  We use Theorem \ref{theorem:genfn3} and Proposition \ref{proposition:prop-singshift}:
{\allowdisplaybreaks \begin{align}
f_{1,4,1}(x,y,q)&=j(y;q)m(q^9x/y^4,q^{15},q^{3k}y/x)+j(x;q)m(q^9y/x^4,q^{15},x/q^{3k}y)\label{equation:f141-genk}\\
& \ \ \ \ \ -(-x)^kq^{\binom{k}{2}}\frac{q^{5k+1}xyJ_{3}J_{15}j(q^{2+k}x;q^{5})j(q^{2+4k}y;q^5)}
{J_{5}^2j(q^{6+3k}x^3;q^{15})j(q^{6+12k}y^3;q^{15})}\notag\\
& \ \ \ \ \ \cdot \Big [ j(q^{11+6k}x^2y;q^{15})j(q^{11+9k}xy^2;q^{15})\notag\\
& \ \ \ \ \ \ \ \ \ \ -q^{4+5k}xyj(q^{16+6k}x^2y;q^{15})j(q^{16+9k}xy^2;q^{15})\Big ]. \notag
\end{align}

We prove (\ref{equation:f141-evaluation-0}). In (\ref{equation:f141-genk}), set $k=1$
\begin{align*}
f_{1,4,1}(q,q,q)&=j(q;q)m(q^6,q^{15},q^{3})+j(q;q)m(q^6,q^{15},q^{-3})\\
& \ \ \ \ \ +q^9\frac{J_{3}J_{15}j(q^{4};q^{5})j(q^{7};q^5)}
{J_{5}^2j(q^{12};q^{15})j(q^{21};q^{15})}\\
& \ \ \ \ \ \cdot \Big [ j(q^{20};q^{15})j(q^{23};q^{15})
 -q^{11}j(q^{25};q^{15})j(q^{28};q^{15})\Big ]\\
&=\frac{q^{13}J_{3}J_{15}J_{1,5}J_{2,5}}
{J_{5}^2J_{3,15}J_{6,15}}
 \cdot \Big [ q^{-13}J_{5}J_{8,15}
-q^{-12}J_{5}J_{2,15}\Big ],
\end{align*}}%
where we have used (\ref{equation:j-elliptic}).  Simplifying, we have
\begin{equation*}
f_{1,4,1}(q,q,q)
=\frac{J_{3}J_{15}J_{1,5}J_{2,5}}
{J_{5}J_{3,15}J_{6,15}}
 \cdot \Big [ J_{8,15}
-qJ_{2,15}\Big ]=J_1
 \cdot \Big [ J_{8,15}
-qJ_{2,15}\Big ],
\end{equation*}
where we have twice used the product rearrangement $J_{1,5}J_{2,5}=J_1J_5$.

For (\ref{equation:f141-evaluation-1}), we recall (\ref{equation:f141-genk}) and set $k=0$ to have
{\allowdisplaybreaks \begin{align*}
f_{1,4,1}(q^2,q,q)&=j(q;q)m(q^7,q^{15},q^{-1})+j(q^2;q)m(q^2,q^{15},q)\\
& \ \ \ \ \ -\frac{q^4J_{3}J_{15}j(q^{4};q^{5})j(q^3;q^5)}
{J_{5}^2j(q^{12};q^{15})j(q^{9};q^{15})}\\
& \ \ \ \ \ \cdot \Big [ j(q^{16};q^{15})j(q^{15};q^{15})
-q^{7}j(q^{21};q^{15})j(q^{20};q^{15})\Big ]\\
&=\frac{J_{3}J_{15}J_{4,5}J_{3,5}J_{6,15}J_{5}}
{J_{5}^2J_{3,15}J_{6,15}}\\
&=J_{1}J_{6,15},
\end{align*}}%
where for the last equality we used (\ref{equation:j-elliptic}), (\ref{equation:1.7}), and elementary product rearrangements.

We prove (\ref{equation:f141-evaluation-2}).  In (\ref{equation:f141-genk}), we set $k=2$:
{\allowdisplaybreaks \begin{align*}
f_{1,4,1}(q^2,q^2,q)&=j(q^2;q)m(q^3,q^{15},q^{6})+j(q^2;q)m(q^3,q^{15},q^{-6})\\
& \ \ \ \ \ -q^{20}\frac{J_{3}J_{15}j(q^{6};q^{5})j(q^{12};q^5)}
{J_{5}^2j(q^{18};q^{15})j(q^{36};q^{15})}\\
& \ \ \ \ \ \cdot \Big [ j(q^{29};q^{15})j(q^{35};q^{15})
 -q^{18}j(q^{34};q^{15})j(q^{40};q^{15})\Big ]\\
&=\frac{J_{3}J_{15}J_{1,5}J_{2,5}}
{J_{5}^2J_{3,15}J_{6,15}}
 \cdot \Big [ qJ_{14,15}J_{5}
 +J_{4,15}J_{5}\Big ]\\
&=J_{1} \cdot \Big [ J_{4,15}+ qJ_{14,15} \Big ].
\end{align*}}%

For (\ref{equation:f141-evaluation-3}), we take (\ref{equation:f141-genk}) and set $k=1$:
{\allowdisplaybreaks \begin{align*}
f_{1,4,1}(q^3,q^2,q)&=j(q^2;q)m(q^4,q^{15},q^{2})+j(q^3;q)m(q^{-1},q^{15},q^{-2})\\
& \ \ \ \ \ +q^{14}\frac{J_{3}J_{15}j(q^{6};q^{5})j(q^{8};q^5)}
{J_{5}^2j(q^{18};q^{15})j(q^{24};q^{15})}\\
& \ \ \ \ \ \cdot \Big [ j(q^{25};q^{15})j(q^{27};q^{15})
-q^{14}j(q^{30};q^{15})j(q^{32};q^{15})\Big ]\\
&=\frac{J_{3}J_{15}J_{1,5}J_{3,5}}
{J_{5}^2J_{3,15}J_{9,15}}
 \cdot  J_{5}J_{12,15}\\
&=J_{1}J_{3,15}. \qedhere
\end{align*}}%

\end{proof}

\begin{proof}[Proof of Theorem \ref{theorem:Level3-theorem}]   For $N=3$ we only need to be concerned with $\ell\in\{0,1, 2, 3\}$, $0\le m <6$, $m\equiv \ell \pmod 2$.  Because of Lemma \ref{lemma:masterLemma}, we only need to be concerned with the $(\ell,m)$-tuples $(0,0), (1,1), (2,0), (3,1)$.

For $(\ell,m)=(0,0)$, we have
\begin{equation*}
q^{-\frac{1}{12}(m^2-\ell^2)}J_{1}^3\mathcal{C}_{m,\ell}^{1}(q)
=J_{1}^3\mathcal{C}_{0,0}^{1}(q)
=f_{1,4,1}(q,q,q)
=J_{1}\cdot ( J_{8,15}-qJ_{2,15}),
\end{equation*}
where the last equality follows from identity (\ref{equation:f141-evaluation-0}).  For $(\ell,m)=(1,1)$, we have
\begin{equation*}
q^{-\frac{1}{12}(m^2-\ell^2)}J_{1}^3\mathcal{C}_{m,\ell}^{1}(q)
=J_{1}^3\mathcal{C}_{1,1}^{1}(q)
=f_{1,4,1}(q^2,q,q)
=J_{1}J_{6,15},
\end{equation*}
where the last equality follows from identity (\ref{equation:f141-evaluation-1}).  For $(\ell,m)=(2,0)$, we have
\begin{equation*}
q^{-\frac{1}{12}(m^2-\ell^2)}J_{1}^3\mathcal{C}_{m,\ell}^{1}(q)
=q^{\tfrac{1}{3}}J_{1}^3\mathcal{C}_{2,0}^{1}(q)
=q^{\tfrac{1}{3}}f_{1,4,1}(q^2,q^2,q)
=q^{\tfrac{1}{3}}J_{1}\cdot ( J_{11,15}+qJ_{1,15}),
\end{equation*}
where the last equality follows from identity (\ref{equation:f141-evaluation-2}).  For $(\ell,m)=(3,1)$, we have
\begin{equation*}
q^{-\frac{1}{12}(m^2-\ell^2)}J_{1}^3\mathcal{C}_{m,\ell}^{1}(q)
=q^{\tfrac{2}{3}}J_{1}^3\mathcal{C}_{3,1}^{1}(q)
=q^{\tfrac{2}{3}}f_{1,4,1}(q^3,q^2,q)
=q^{\tfrac{2}{3}}J_{1}J_{3,15},
\end{equation*}
where the last equality follows from identity (\ref{equation:f141-evaluation-3}).  
\end{proof}


\section{Computing the general level $N=4$ string function}\label{section:LevelN4-general} 

\begin{proof}[Proof of Theorem \ref{theorem:Level4-theorem}]   For $N=4$ we only need to be concerned with $\ell\in\{0,1, 2, 3, 4\}$, $0\le m <8$, $m\equiv \ell \pmod 2$.  Because of Lemma \ref{lemma:masterLemma}, we only need to be concerned with the $(\ell,m)$-tuples $(0,0), (0,4), (0,2), (1,1), (1,3), (2,0), (2,2)$.  For each $(\ell,m)$-tuple, one uses Proposition \ref{proposition:N4-string-evaluations} to compute
\begin{equation*}
q^{-\tfrac{1}{16}(m^2-\ell^2)}J_{1}^3\mathcal{C}_{m,\ell}^{N}(q).\qedhere
\end{equation*}
\end{proof}

\begin{proposition} \label{proposition:N4-Hecke-evaluations} We have
{\allowdisplaybreaks \begin{subequations}
\begin{gather}
f_{3,3,1}(-q^2,q,q)-qf_{3,3,1}(-q^4,q^3,q)=J_{1}J_{1,2},\label{equation:N4-Hecke-evaluation-0}\\
f_{3,3,1}(q^2,q,q)+qf_{3,3,1}(q^4,q^3,q)=J_{1}\overline{J}_{3,6},\label{equation:N4-Hecke-evaluation-1}\\
f_{1,5,1}(q^2,q^2,q)=J_{1}\overline{J}_{1,6},\label{equation:N4-Hecke-evaluation-2}\\
f_{3,3,1}(q^3,q,q)=J_{1,4}J_{6,12},\label{equation:N4-Hecke-evaluation-3}\\
f_{1,5,1}(q^2,1,q)=qJ_{1}\overline{J}_{6,24},\label{equation:N4-Hecke-evaluation-4}\\
f_{3,3,1}( q^{5},q^4,q^2)+ qf_{3,3,1}( q^{7},q^6,q^2)=J_{2}\overline{J}_{1,4},\label{equation:N4-Hecke-evaluation-5}\\
f_{3,3,1}( -q^{5},q^4,q^2)- qf_{3,3,1}( -q^{7},q^6,q^2)=J_{2}J_{1,4}.\label{equation:N4-Hecke-evaluation-6}
\end{gather}
\end{subequations}}%
\end{proposition}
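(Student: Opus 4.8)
The strategy follows the template of Propositions~\ref{proposition:f131-evaluations} and~\ref{proposition:f141-evaluations}: expand each Hecke-type double-sum on the left by the appropriate structural formula from Section~\ref{section:double-sums}, choose the free parameters so that the Appell--Lerch (mock) pieces either vanish outright or cancel against one another, and then reduce the surviving product of theta functions to the stated right-hand side using (\ref{equation:j-elliptic}), (\ref{equation:1.7}), the splitting identities (\ref{equation:1.10}), (\ref{equation:1.12}), (\ref{equation:j-split}), the Weierstrass relation (Proposition~\ref{proposition:Weierstrass}), and elementary product rearrangements. The key fact exploited everywhere is that $j(q^{k};q^{n})=0$ for integral $k$, which is what makes the mock pieces disappear.

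For the five identities involving $f_{3,3,1}$, namely (\ref{equation:N4-Hecke-evaluation-0}), (\ref{equation:N4-Hecke-evaluation-1}), (\ref{equation:N4-Hecke-evaluation-3}), (\ref{equation:N4-Hecke-evaluation-5}), (\ref{equation:N4-Hecke-evaluation-6}), I would apply Corollary~\ref{corollary:f331-expansion}, which writes $f_{3,3,1}(x,y,q)$ as $h_{3,3,1}(x,y,q,-1,-1)$ minus a sum over $d=0,1,2$ of theta quotients, with the base replaced by $q^{2}$ throughout in (\ref{equation:N4-Hecke-evaluation-5}) and (\ref{equation:N4-Hecke-evaluation-6}). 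For the single evaluation (\ref{equation:N4-Hecke-evaluation-3}) one checks that both $j(x;q^{3})$ and $j(y;q)$ occurring in $h_{3,3,1}$ are of the form $j(q^{k};\cdot)$ and hence vanish, so that the whole answer is the $d$-sum, of which one summand moreover drops out because its theta numerator is $j(q^{3};q^{3})=0$; the remaining two collapse to $J_{1,4}J_{6,12}$ after (\ref{equation:j-elliptic}) and rearrangements. For the four combinations (\ref{equation:N4-Hecke-evaluation-0}), (\ref{equation:N4-Hecke-evaluation-1}), (\ref{equation:N4-Hecke-evaluation-5}), (\ref{equation:N4-Hecke-evaluation-6}) the individual $h_{3,3,1}$-terms are nonzero, but after normalizing their theta coefficients by (\ref{equation:j-elliptic}) and (\ref{equation:1.7}) and aligning the $z$-arguments of the $m$-functions by (\ref{equation:mxqz-fnq-z}) and (\ref{equation:mxqz-fnq-x}), the mock pieces cancel in the $\pm$-combination, leaving once more a combination of theta quotients. (The base-$q^{2}$ identities (\ref{equation:N4-Hecke-evaluation-5}) and (\ref{equation:N4-Hecke-evaluation-6}) are the integral-power incarnation, after $q\mapsto q^{2}$, of the $\ell=1$ instance of Theorem~\ref{theorem:main-result}, where $\ell=1$ would otherwise force half-integral exponents.)

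For the two identities involving $f_{1,5,1}$, namely (\ref{equation:N4-Hecke-evaluation-2}) and (\ref{equation:N4-Hecke-evaluation-4}), I would use Theorem~\ref{theorem:genfn4} together with the shift Proposition~\ref{proposition:prop-singshift} at $p=4$, choosing the integer shift parameter so that the Appell--Lerch terms of $g_{1,5,1}$ are controlled; for (\ref{equation:N4-Hecke-evaluation-4}) one already has $j(1;q)=0$, removing one of the two $m$-terms at the outset, and an initial application of (\ref{equation:f-shift}) or (\ref{equation:f-flip}) puts $f_{1,5,1}(q^{2},1,q)$ into a more convenient form. The residual $m$-contributions are then dispatched via Corollary~\ref{corollary:mxqz-eval} and the change-of-$z$ formula (\ref{equation:changing-z-theorem}), leaving the single but bulky expression $\Theta_{1,4}$ (evaluated at the shifted arguments), which must be forced down to $J_{1}\overline{J}_{1,6}$, resp.\ $qJ_{1}\overline{J}_{6,24}$.

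The main obstacle is precisely this last collapse. In the $N=2$ and $N=3$ proofs the non-mock remainder $\Theta_{1,2}$, resp.\ $\Theta_{1,3}$, was a short theta quotient that simplified in a line or two; for $N=4$ the remainder $\Theta_{1,4}$ is a two-block ($S_{1}$, $S_{2}$) object in which each block is itself a product of sums of theta products, and collapsing it to a single bar-theta function requires the Weierstrass three-term relation (Proposition~\ref{proposition:Weierstrass}) and the identities (\ref{equation:H1Thm1.1})--(\ref{equation:Thm1.3AH6}) to merge the theta products, followed by the purpose-built Lemma~\ref{lemma:N4-theta-evaluation} and Lemma~\ref{lemma:levelN4-jsplit} to finish. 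A secondary point requiring care is the bookkeeping of the many theta moduli and $q$-powers and the verification that the mock pieces genuinely cancel in each $\pm$-combination; I would handle this by computing each $h_{3,3,1}$ (resp.\ $g_{1,5,1}$) summand separately, recording its theta coefficient and $m$-argument before combining.
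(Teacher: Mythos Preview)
Your overall plan is close to the paper's for six of the seven identities, but your treatment of (\ref{equation:N4-Hecke-evaluation-3}) has a genuine gap. At $(x,y)=(q^{3},q)$ the first summand of $h_{3,3,1}$ is $j(q^{3};q^{3})\,m(-1,q^{2},-1)$; although $j(q^{3};q^{3})=0$, the factor $m(-1,q^{2},-1)$ is \emph{undefined}, since in (\ref{equation:mxqz-def}) the product $xz$ equals $1$, an integral power of the base. This term is therefore of type $0\cdot\infty$, not $0$. Likewise, in the $d$-sum of Corollary~\ref{corollary:f331-expansion} the $d=0$ term has $j(q^{2+2d}y/x;q^{6})=j(1;q^{6})=0$ in the denominator while $j(q^{2+2d}y;q^{3})=j(q^{3};q^{3})=0$ in the numerator, so that summand is $0/0$, not simply $0$. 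You cannot discard either contribution; both carry finite nonzero limits.

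The paper handles this by computing $\lim_{x\to q}f_{3,3,1}(x^{3},x,q)$. It first invokes the change-of-$z$ identity (\ref{equation:changing-z-theorem}) to rewrite $m(-q^{2}x^{-2},q^{2},-1)$ as $m(-q^{2}x^{-2},q^{2},q)$ plus an explicit theta quotient, and then factors $j(x^{3};q^{3})$ and $j(x^{2};q^{2})$ via (\ref{equation:1.12}) so that the zeros in numerator and denominator cancel \emph{before} the limit is taken. The $h$-piece contributes $\tfrac{3}{2}J_{3}^{3}/\overline{J}_{0,2}$, the singular $d$-summand contributes $-\tfrac{1}{2}J_{3}^{3}/\overline{J}_{0,2}$, and the remaining regular terms are combined using Lemma~\ref{lemma:N4-theta-evaluation} to reach $J_{1,4}J_{6,12}$. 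Incidentally, Lemma~\ref{lemma:N4-theta-evaluation} enters here and not in the $f_{1,5,1}$ identities, and Lemma~\ref{lemma:levelN4-jsplit} is not used anywhere in this proposition (it belongs to Section~\ref{section:LevelN4-KP}). For (\ref{equation:N4-Hecke-evaluation-2}) and (\ref{equation:N4-Hecke-evaluation-4}) the paper uses the shift (\ref{equation:f-shift}) rather than Proposition~\ref{proposition:prop-singshift} to move to arguments at which both theta coefficients in $g_{1,5,1}$ vanish outright, so no residual $m$-terms survive and Corollary~\ref{corollary:mxqz-eval} is not needed; your route via Proposition~\ref{proposition:prop-singshift} would also work, provided you pick the shift to avoid $z=1$.
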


\begin{proposition} \label{proposition:N4-string-evaluations}  We have
{\allowdisplaybreaks \begin{subequations}
\begin{gather}
\mathcal{C}_{0,0}^{4}(q)=\frac{1}{2}(J_{1}\overline{J}_{3,6}+J_{1}J_{1,2}),\label{equation:N4-string-evaluation-0} \\
\mathcal{C}_{4,0}^{4}(q)=q\frac{1}{2}(J_{1}\overline{J}_{3,6}-J_{1}J_{1,2}),\label{equation:N4-string-evaluation-1}\\
\mathcal{C}_{2,0}^{4}(q)=qJ_{1}\overline{J}_{6,24},\label{equation:N4-string-evaluation-2}\\
\mathcal{C}_{1,1}^{4}(q)=J_{1}\overline{J}_{3,8},\label{equation:N4-string-evaluation-3}\\
\mathcal{C}_{3,1}^{4}(q)= J_{1}\overline{J}_{1,8},\label{equation:N4-string-evaluation-4}\\
\mathcal{C}_{0,2}^{4}(q)= J_{1}\overline{J}_{1,6},\label{equation:N4-string-evaluation-5}\\
\mathcal{C}_{2,2}^{4}(q)= J_{1,4}J_{6,12}.\label{equation:N4-string-evaluation-6}
\end{gather}
\end{subequations}}%

\end{proposition}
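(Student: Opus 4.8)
The plan is to reduce each of the seven evaluations to one, or a $\pm$-combination, of the Hecke-type double-sum evaluations in Proposition~\ref{proposition:N4-Hecke-evaluations}. By Lemma~\ref{lemma:masterLemma} it suffices to treat the representative tuples $(m,\ell)\in\{(0,0),(4,0),(2,0),(1,1),(3,1),(0,2),(2,2)\}$. The tools are (\ref{equation:SW-fabc}), which gives $J_1^3\,\mathcal{C}_{m,\ell}^{4}(q)=f_{1,5,1}(q^{1+\frac12(m+\ell)},q^{1-\frac12(m-\ell)},q)$, Corollary~\ref{corollary:f1K1-fKK1-B}, and Theorem~\ref{theorem:main-result} with $K=2$; throughout one tracks the $q$-power prefactor via $\mathcal{C}_{m,\ell}^{4}(q)=q^{-s(m,\ell,4)}C_{m,\ell}^{4}(q)$, and it is the quantity $J_1^3\,\mathcal{C}_{m,\ell}^{4}(q)$ (as needed in (\ref{equation:LevelN4-general})) that I will compute.

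Three tuples reduce immediately. For $(m,\ell)=(0,2)$, (\ref{equation:SW-fabc}) gives $J_1^3\,\mathcal{C}_{0,2}^{4}(q)=f_{1,5,1}(q^{2},q^{2},q)$, which equals $J_1\overline{J}_{1,6}$ by (\ref{equation:N4-Hecke-evaluation-2}); for $(m,\ell)=(2,0)$ it gives $J_1^3\,\mathcal{C}_{2,0}^{4}(q)=f_{1,5,1}(q^{2},1,q)$, which equals $qJ_1\overline{J}_{6,24}$ by (\ref{equation:N4-Hecke-evaluation-4}) (the value $y=1$ read off as the limit of the convergent double-sum); and for $(m,\ell)=(2,2)$, Corollary~\ref{corollary:f1K1-fKK1-B} with $K=\ell=2$ gives $J_1^3\,\mathcal{C}_{2,2}^{4}(q)=f_{3,3,1}(q^{3},q,q)=J_{1,4}J_{6,12}$ by (\ref{equation:N4-Hecke-evaluation-3}). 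In each case the $q$-power built into (\ref{equation:SW-fabc}) already matches (\ref{equation:N4-string-evaluation-2}), (\ref{equation:N4-string-evaluation-5}), (\ref{equation:N4-string-evaluation-6}).

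The remaining four tuples form two reflection pairs, handled by Theorem~\ref{theorem:main-result} with $K=2$ in both signs. For $m=\ell=0$:
\begin{equation*}
C_{0,0}^{4}\pm C_{4,0}^{4}=\frac{q^{s(0,0,4)}}{J_1^3}\Bigl(f_{3,3,1}(\pm q^{2},q,q)\pm q\,f_{3,3,1}(\pm q^{4},q^{3},q)\Bigr),
\end{equation*}
and (\ref{equation:N4-Hecke-evaluation-1}), (\ref{equation:N4-Hecke-evaluation-0}) evaluate the two right sides as $\frac{q^{s(0,0,4)}}{J_1^3}J_1\overline{J}_{3,6}$ and $\frac{q^{s(0,0,4)}}{J_1^3}J_1J_{1,2}$; solving the $2\times 2$ system and inserting $q^{-s(0,0,4)}$, $q^{-s(4,0,4)}$ gives (\ref{equation:N4-string-evaluation-0}), (\ref{equation:N4-string-evaluation-1}). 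For $m=\ell=1$ one gets $C_{1,1}^{4}\pm C_{3,1}^{4}=\frac{q^{s(1,1,4)}}{J_1^3}\bigl(f_{3,3,1}(\pm q^{5/2},q^{2},q)\pm q^{1/2}f_{3,3,1}(\pm q^{7/2},q^{3},q)\bigr)$; substituting $q\mapsto q^{1/2}$ in (\ref{equation:N4-Hecke-evaluation-5}), (\ref{equation:N4-Hecke-evaluation-6}) identifies these with $\frac{q^{s(1,1,4)}}{J_1^3}J_1\,j(-q^{1/2};q^{2})$ and $\frac{q^{s(1,1,4)}}{J_1^3}J_1\,j(q^{1/2};q^{2})$, and the parity split of $j(\pm q^{1/2};q^{2})$ — the $m=2$ instance of (\ref{equation:j-split}) — gives $j(\pm q^{1/2};q^{2})=\overline{J}_{3,8}\pm q^{1/2}\overline{J}_{1,8}$; adding, subtracting, and inserting $q^{-s(1,1,4)}$, $q^{-s(3,1,4)}$ yields (\ref{equation:N4-string-evaluation-3}), (\ref{equation:N4-string-evaluation-4}).

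What remains is routine: collapsing the theta quotients produced by (\ref{equation:N4-Hecke-evaluation-2})--(\ref{equation:N4-Hecke-evaluation-6}) and by the $q\mapsto q^{1/2}$ substitution into the single $\overline{J}$-products claimed, which proceeds through the product rearrangements of Section~\ref{section:prelim} together with (\ref{equation:j-elliptic}) and (\ref{equation:1.7}), exactly as in the proofs of Propositions~\ref{proposition:f131-evaluations} and \ref{proposition:f141-evaluations}. I expect the $\ell=1$ pair to be the main obstacle: one must correctly recognize the base-$q^{1/2}$ theta functions $j(\pm q^{1/2};q^{2})$ as $j$-splits, and then verify that the fractional prefactor $q^{-s(3,1,4)+s(1,1,4)}=q^{1/2}$ combines with the $q^{1/2}$ coming from that split to give precisely the integral power of $q$ in (\ref{equation:N4-string-evaluation-4}); a minor subsidiary point is justifying the degenerate specialization $y=1$ used for $(m,\ell)=(2,0)$.
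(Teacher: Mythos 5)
Your proposal is correct and follows essentially the same route as the paper: the tuples $(0,2)$, $(2,0)$, $(2,2)$ are handled directly via (\ref{equation:SW-fabc}) and Corollary \ref{corollary:f1K1-fKK1-B} together with (\ref{equation:N4-Hecke-evaluation-2}), (\ref{equation:N4-Hecke-evaluation-4}), (\ref{equation:N4-Hecke-evaluation-3}), and the two reflection pairs are resolved by solving the $2\times2$ systems coming from Theorem \ref{theorem:main-result} with $K=2$, exactly as in the paper. The only cosmetic difference is that for the $\ell=1$ pair the paper works at base $q^2$ (so that (\ref{equation:N4-Hecke-evaluation-5}), (\ref{equation:N4-Hecke-evaluation-6}) apply verbatim) and substitutes back at the end, whereas you substitute $q\mapsto q^{1/2}$ into those evaluations; the two are the same computation up to a change of variable, and both conclude with the $m=2$ instance of (\ref{equation:j-split}).
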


\begin{proof}[Proof of Proposition \ref{proposition:N4-Hecke-evaluations}]   We recall 
\begin{equation}
C_{m,\ell}^{N}(q)=q^{s(m,\ell,N)}\mathcal{C}_{m,\ell}^{N}(q) \label{equation:string-change}.
\end{equation}
We prove identities (\ref{equation:N4-string-evaluation-0}) and (\ref{equation:N4-string-evaluation-1}).   From \cite[Theorem 1.1]{MPS}, \cite[p. 219]{KP}, we have
\begin{equation*}
C_{0,0}^{4}(q)-C_{4,0}^{4}(q)=\frac{q^{-\tfrac{1}{12}}}{J_{1}^3}
\Big ( 
f_{3,3,1}(-q^{2},q,q)-qf_{3,3,1}(-q^4,q^3,q)
\Big )
= \frac{q^{-\tfrac{1}{12}}}{J_{1}^3} J_{1}J_{1,2},
\end{equation*}
where the last equality follows from (\ref{equation:N4-Hecke-evaluation-0}).   Similarly, we obtain an identity not in \cite{KP}:
\begin{equation*}
C_{0,0}^{4}(q)+C_{4,0}^{4}(q)
=\frac{q^{-\tfrac{1}{12}}}{J_{1}^3}
\Big ( 
f_{3,3,1}(q^{2},q,q)+qf_{3,3,1}(q^4,q^3,q)
\Big )
=q^{-\tfrac{1}{12}}\frac{1}{J_{1}^3} J_{1}\overline{J}_{3,6},
\end{equation*} 
where the last equality follows from (\ref{equation:N4-Hecke-evaluation-1}).    Hence
\begin{align*}
C_{0,0}^{4}(q)=\frac{q^{-\tfrac{1}{12}}}{2J_{1}^3}(J_{1}\overline{J}_{3,6}+J_{1}J_{1,2}), \\
C_{4,0}^{4}(q)=\frac{q^{-\tfrac{1}{12}}}{2J_{1}^3}(J_{1}\overline{J}_{3,6}-J_{1}J_{1,2}).
\end{align*}
The two identities (\ref{equation:N4-string-evaluation-0}) and (\ref{equation:N4-string-evaluation-1}) then follow from (\ref{equation:string-change}).

We prove identity (\ref{equation:N4-string-evaluation-2}).  From (\ref{equation:SW-fabc}), \cite[p. 219]{KP}, we have
\begin{equation*}
C_{2,0}^{4}(q)
=\frac{q^{-\tfrac{1}{3}}}{J_{1}^3}
f_{1,5,1}(q^{2},1,q)
=\frac{q^{\tfrac{2}{3}}}{J_{1}^3} J_{1}\overline{J}_{6,24},
\end{equation*} 
where the last equality follows from (\ref{equation:N4-Hecke-evaluation-4}).    Identity (\ref{equation:N4-string-evaluation-3}) then follows from (\ref{equation:string-change}).

We prove identities (\ref{equation:N4-string-evaluation-3}) and (\ref{equation:N4-string-evaluation-4}).  From \cite[Theorem 1.1]{MPS}, \cite[p. 220]{KP}, we have
\begin{equation*}
C_{1,1}^{4}(q^2)+C_{3,1}^{4}(q^2)=\frac{q^{-\tfrac{1}{6}}}{J_{2}^3}
\Big ( 
f_{3,3,1}(q^{5},q^4,q^2)+q f_{3,3,1}(q^{7},q^6,q^2)
\Big )
= \frac{q^{-\tfrac{1}{6}}}{J_{2}^3} J_{2}\overline{J}_{1,4},
\end{equation*}
where the last equality follows from (\ref{equation:N4-Hecke-evaluation-5}).   Similarly, we obtain an identity not in \cite{KP}:
\begin{equation*}
C_{1,1}^{4}(q^2)-C_{3,1}^{4}(q^2)=\frac{q^{-\tfrac{1}{6}}}{J_{2}^3}
\Big ( 
f_{3,3,1}(-q^{5},q^4,q^2)-q f_{3,3,1}(-q^{7},q^6,q^2)
\Big )
= \frac{q^{-\tfrac{1}{6}}}{J_{2}^3} J_{2}J_{1,4},
\end{equation*}
where the last equality follows from (\ref{equation:N4-Hecke-evaluation-6}).  Hence
\begin{align*}
C_{1,1}^{4}(q^2)&=\frac{q^{-\tfrac{1}{6}}}{2J_{2}^2}\cdot \Big ( \overline{J}_{1,4}+J_{1,4}\Big )
=\frac{q^{-\tfrac{1}{6}}}{J_{2}^2}\cdot  \overline{J}_{6,16},\\
C_{3,1}^{4}(q^2)&=\frac{q^{-\tfrac{1}{6}}}{2J_{2}^2}\cdot \Big ( \overline{J}_{1,4}-J_{1,4}\Big )
=\frac{q^{-\tfrac{1}{6}}}{J_{2}^2}\cdot  \overline{J}_{14,16},
\end{align*}
where we have used (\ref{equation:j-split}) with $m=2$.   Using (\ref{equation:1.7}), we have
\begin{align*}
C_{1,1}^{4}(q)&=\frac{q^{-\tfrac{1}{12}}}{J_{1}^2}\cdot  \overline{J}_{3,8},\\
C_{3,1}^{4}(q)&=\frac{q^{-\tfrac{1}{12}}}{J_{1}^2}\cdot  \overline{J}_{1,8},
\end{align*}
and the identities (\ref{equation:N4-string-evaluation-3}) and (\ref{equation:N4-string-evaluation-4}) follow from (\ref{equation:string-change}).

We prove identity (\ref{equation:N4-string-evaluation-5}).  From (\ref{equation:SW-fabc}), we obtain an identity which is not in \cite{KP}:
\begin{equation}
C_{0,2}^{4}(q)=\frac{q^{\tfrac{1}{4}}}{J_{1}^3}f_{1,5,1}(q^2,q^2,q)
=\frac{q^{\tfrac{1}{4}}}{J_{1}^3}J_{1}\overline{J}_{1,6},
\end{equation}
where the last equality follows from (\ref{equation:N4-Hecke-evaluation-2}).   Identity (\ref{equation:N4-string-evaluation-5}) then follows from (\ref{equation:string-change}).

We prove identity (\ref{equation:N4-string-evaluation-6}).   Using \cite[Corollary 1.3]{MPS}, we obtain an identity which is not in \cite{KP}:
\begin{equation}
C_{2,2}^{4}(q)=\frac{1}{J_{1}^3}f_{3,3,1}(q^3,q,q)
=\frac{1}{J_{1}^3}J_{1,4}J_{6,12},
\end{equation} 
where the last equality follows from (\ref{equation:N4-Hecke-evaluation-3}).  Identity (\ref{equation:N4-string-evaluation-6}) then follows from (\ref{equation:string-change}).  \qedhere

\end{proof}

\begin{proof}[Proof of Proposition \ref{proposition:N4-Hecke-evaluations}]  Identity (\ref{equation:N4-Hecke-evaluation-0}) is true by \cite[Lemma 3.11]{Mort2021}.

We prove (\ref{equation:N4-Hecke-evaluation-1}).  We recall Corollary \ref{corollary:f331-expansion}.   The contribution from (\ref{equation:g331-id}) reads
{\allowdisplaybreaks \begin{align*}
h_{3,3,1}&(q^2,q,q,-1,-1)+qh_{3,3,1}(q^4,q^3,q,-1,-1)\\
&=j(q^2;q^3)m(-q,q^2,-1)+j(q;q)m(-q^2,q^6,-1)\\
& \ \ \ \ \ +q\Big ( j(q^4;q^3)m(-q,q^2,-1)+j(q;q)m(-q^{-2},q^6,-1)\Big ) \\
&=j(q^2;q^3)m(-q,q^2,-1)+q j(q^4;q^3)m(-q,q^2,-1)\\
&=0,
\end{align*}}%
where we have used (\ref{equation:j-elliptic}).  Hence
{\allowdisplaybreaks \begin{align*}
f_{3,3,1}&(q^2,q,q)+qf_{3,3,1}(q^4,q^3,q)\\
&= -\sum_{d=0}^{2}
\frac{q^{d(d+1)}j\big (q^{3+2d};q^{3}\big )  j\big (-q^{5-2d};q^{6}\big ) J_{6}^3j\big (q^{3+2d};q^{6}\big )}
{4\overline{J}_{2,8}\overline{J}_{6,24}j\big (q^{2};q^6\big )j\big (q^{1+2d};q^6\big )}\\
& \ \ \ \ \ -q\sum_{d=0}^{2}
\frac{q^{d(d+1)}j\big (q^{5+2d};q^{3}\big )  j\big (-q^{5-2d};q^{6}\big ) J_{6}^3j\big (q^{-1+2d};q^{6}\big )}
{4\overline{J}_{2,8}\overline{J}_{6,24}j\big (q^{-2};q^6\big )j\big (q^{1+2d};q^6\big )}\\
&= \frac{J_{1}J_{6}^3}{2\overline{J}_{2,8}\overline{J}_{6,24}J_{2}J_{3,6}}\Big ( 
  \overline{J}_{3,6}J_{1,6}
 + \overline{J}_{1,6}J_{3,6} \Big ) \\
&= \frac{J_{1}J_{6}^3}{2\overline{J}_{2,8}\overline{J}_{6,24}J_{2}J_{3,6}} 
2j(q^4;q^{12})j(q^4;q^{12})\\
&= J_{1}\overline{J}_{3,6},
\end{align*}}%
where for the penultimate equality we used (\ref{equation:H1Thm1.2B}).

We prove (\ref{equation:N4-Hecke-evaluation-2}).  Using (\ref{equation:f-shift}) with $(R,S)=(-2,1)$, it is equivalent to show
\begin{equation}
-q^{9}f_{1,5,1}(q^{5},q^{-7},q)=J_{1}\overline{J}_{1,6}.
\end{equation}
We recall Theorem \ref{theorem:genfn4}.  We have
\begin{align*}
f_{1,5,1}(x,y,q)=g_{1,5,1}(x,y,q,y/x,x/y)-\Theta_{1,4}(x,y,q),
\end{align*}
where $x\rightarrow q^5$ and $y\rightarrow q^{-7}$ yield
\begin{equation*}
g_{1,5,1}(x,y,q,y/x,x/y)
\rightarrow j(q^{-7};q)m (-q^{54},q^{24},q^{-12} )
+ j(q^5;q)m(-q^{-18},q^{24},q^{12} )=0
\end{equation*}
and
{\allowdisplaybreaks \begin{align*}
\Theta_{1,4}(x,y,q)&\rightarrow \frac{q^{-1}}{j(-q^{30};q^{24})j(-q^{-18};q^{24})}\Big \{ J_{4,16}  S_1-qJ_{8,16} S_2\Big \}\\
&\ \ \ \ \ =\frac{q^{23}}{\overline{J}_{6,24}^2}\Big \{ J_{4,16}  S_1-qJ_{8,16} S_2\Big \},
\end{align*}}%
with
{\allowdisplaybreaks \begin{align*}
S_1&=\frac{j(q^{18};q^{24})j(-1;q^{24})j(q^{3};q^{12})}{J_{12}^3J_{48}} 
 \cdot j(-q^{6};q^{24})j(q^{-12};q^{24})J_{24}^2,\\
&=-q^{-12}\frac{J_{6,24}\overline{J}_{0,24}J_{3,12}}{J_{12}^3J_{48}} 
 \cdot \overline{J}_{6,24}J_{12,24}J_{24}^2,
\end{align*}}%
and
{\allowdisplaybreaks \begin{align*}
S_2&=\frac{j(q^{6};q^{24})j(-q^{-12};q^{24})j(q^{9};q^{12})}{J_{12}^2} 
 \cdot  \frac{q^{2}j(-q^{6},q^{-12};q^{24})J_{48}}{q^{-7}J_{24} }\\
 &=-q^{-15}\frac{J_{6,24}\overline{J}_{12,24}J_{3,12}}{J_{12}^2} 
 \cdot  \frac{\overline{J}_{6,24}J_{12,24}J_{48}}{J_{24} }.
\end{align*}}%
Hence
{\allowdisplaybreaks \begin{align*}
f_{1,5,1}(q^{5},q^{-7},q)
&=\frac{q^{9}}{\overline{J}_{6,24}^2}\Big [ q^2J_{4,16} 
\frac{J_{6,24}\overline{J}_{0,24}J_{3,12}}{J_{12}^3J_{48}} 
 \cdot \overline{J}_{6,24}J_{12,24}J_{24}^2\\
& \ \ \ \ \ -J_{8,16} \frac{J_{6,24}\overline{J}_{12,24}J_{3,12}}{J_{12}^2} 
 \cdot  \frac{\overline{J}_{6,24}J_{12,24}J_{48}}{J_{24} }\Big ]\\
&=\frac{q^{9}J_{3,12}J_{6,24}J_{12,24}}{\overline{J}_{6,24}}\Big [ q^2J_{4,16} 
\frac{\overline{J}_{0,24}}{J_{12}^3J_{48}} 
 \cdot J_{24}^2
  -J_{8,16} \frac{\overline{J}_{12,24}}{J_{12}^2} 
 \cdot  \frac{J_{48}}{J_{24} }\Big ]\\
&=\frac{q^{9}J_{3,12}J_{6,24}J_{12,24}}{\overline{J}_{6,24}J_{12}^3}\Big [ q^2\frac{J_{4}J_{16}J_{24}^2 }{J_{8}J_{48}} 
 \cdot \overline{J}_{0,24}
  -\frac{J_{8}^2 J_{12}J_{48}}{J_{16}J_{24} }\cdot \overline{J}_{12,24}\Big ]\\
&=\frac{q^{9}J_{3,12}J_{6,24}J_{12,24}J_{4}}{\overline{J}_{6,24}J_{12}^3}\Big [ q^2\overline{J}_{8,24}
 \cdot \overline{J}_{0,24}
  -\overline{J}_{4,24}\cdot \overline{J}_{12,24}\Big ]\\
&=-\frac{q^{9}J_{3,12}J_{6,24}J_{12,24}J_{4}}{\overline{J}_{6,24}J_{12}^3}\Big [ 
 \overline{J}_{4,24}\cdot \overline{J}_{12,24}
 -q^2\overline{J}_{8,24}
 \cdot \overline{J}_{0,24}\Big ].
\end{align*}}%
Using (\ref{equation:H1Thm1.1}) with $q\rightarrow q^{12}$, $x=y=q^2$ we have
\begin{equation*}
f_{1,5,1}(q^{5},q^{-7},q)
=-\frac{q^{9}J_{3,12}J_{6,24}J_{12,24}J_{4}}{\overline{J}_{6,24}J_{12}^3}
J_{2,12}^2
=-q^{9}J_{1}\overline{J}_{1,6}.
\end{equation*}

We prove (\ref{equation:N4-Hecke-evaluation-3}).    Here we show
\begin{equation}
\lim_{x\rightarrow q}f_{3,3,1}(x^3,x,q)=J_{1,4}J_{6,12}.
\end{equation}
We recall Corollary \ref{corollary:f331-expansion}.   The contribution from (\ref{equation:g331-id}) reads
{\allowdisplaybreaks \begin{align*}
\lim_{x\rightarrow q}&\Big ( h_{3,3,1}(x^3,x,q,-1,-1)\Big ) \\
&=\lim_{x\rightarrow q} \Big [ j(x^3;q^3)m(-q^2x^{-2};q^2,-1)+j(x;q)m(-q^3,q^6,-1)\Big ] \\
&=\lim_{x\rightarrow q} j(x^3;q^3)\Big [ m(-q^2x^{-2};q^2,z)
+z\frac{J_{2}^3j(-z^{-1};q^2)j(zq^2/x^2;q^2)}{\overline{J}_{0,2}j(z;q^2)j(q^2/x^2;q^2)j(-zq^2/x^2;q^2)}\Big ] \\
&=\lim_{x\rightarrow q} j(x^3;q^3)\Big [ m(-q^2x^{-2};q^2,q)
+\frac{J_{2}^3\overline{J}_{1,2}j(q^3/x^2;q^2)}{\overline{J}_{0,2}J_{1,2}j(q^2/x^2;q^2)j(-q^3/x^2;q^2)}\Big ] \\
&=-\lim_{x\rightarrow q} j(x^3;q^3)\frac{J_{2}^3\overline{J}_{1,2}j(qx^2;q^2)}{\overline{J}_{0,2}J_{1,2}j(x^2;q^2)j(-qx^2;q^2)}\\
&=-\lim_{x\rightarrow q} j(x;q)j(x\omega;q)j(x\omega^2;q)\frac{J_{3}}{J_{1}^3}
\frac{J_{2}^3\overline{J}_{1,2}j(qx^2;q^2)}{\overline{J}_{0,2}J_{1,2}j(x;q)j(-x;q)j(-qx^2;q^2)}
\frac{J_{1}^2}{J_{2}}\\
&=j(q\omega;q)j(q\omega^2;q)\frac{J_{3}}{J_{1}^3}
\frac{J_{2}^3}{\overline{J}_{0,2}\overline{J}_{0,1}}
\frac{J_{1}^2}{J_{2}}\\
&=\frac{3}{2} \frac{J_{3}^3}{\overline{J}_{0,2}},
\end{align*}}%
where the second equality follows from (\ref{equation:changing-z-theorem}).  Hence
{\allowdisplaybreaks \begin{align*}
\lim_{x\rightarrow q}&f_{3,3,1}(x^3,x,q)\\
&=\frac{3}{2} \frac{J_{3}^3}{\overline{J}_{0,2}}-\lim_{x\rightarrow q}\sum_{d=0}^{2}
\frac{q^{d(d+1)}J_{6}^3 j\big (q^{2+2d}x;q^{3}\big )  j\big (-q^{4-2d}x^2;q^{6}\big ) j\big (q^{5+2d}x^{-2};q^{6}\big )}
{4\overline{J}_{2,8}\overline{J}_{6,24}j\big (q^{3};q^6\big )j\big (q^{2+2d}x^{-2};q^6\big )}\\
&=\frac{3}{2} \frac{J_{3}^3}{\overline{J}_{0,2}} - \lim_{x\rightarrow q}\Big [\frac{J_{6}^3 j\big (q^{2}x;q^{3}\big )  j\big (-q^{4}x^2;q^{6}\big ) j\big (q^{5}x^{-2};q^{6}\big )}
{4\overline{J}_{2,8}\overline{J}_{6,24}J_{3,6}j\big (q^{2}x^{-2};q^6\big )}\\
& \ \ \ \ \ + \frac{q^{2}J_{6}^3 j\big (q^{4}x;q^{3}\big )  j\big (-q^{2}x^2;q^{6}\big ) j\big (q^{7}x^{-2};q^{6}\big )}
{4\overline{J}_{2,8}\overline{J}_{6,24}J_{3,6}j\big (q^{4}x^{-2};q^6\big )}\\
& \ \ \ \ \ + \frac{q^{6}J_{6}^3 j\big (q^{6}x;q^{3}\big )  j\big (-x^2;q^{6}\big ) j\big (q^{9}x^{-2};q^{6}\big )}
{4\overline{J}_{2,8}\overline{J}_{6,24}J_{3,6}j\big (q^{6}x^{-2};q^6\big )}\Big ] \\
&= \frac{3}{2} \frac{J_{3}^3}{\overline{J}_{0,2}}- \lim_{x\rightarrow q}\Big [\frac{J_{6}^3 j\big (q^{2}x;q^{3}\big )  j\big (-q^{4}x^2;q^{6}\big ) 
j\big (qx^{2};q^{6}\big )}
{\overline{J}_{0,2}\overline{J}_{0,6}J_{3,6}j\big (q^{4}x^{2};q^6\big )}\\
& \ \ \ \ \ + \frac{xJ_{6}^3 j\big (qx;q^{3}\big )  j\big (-q^{2}x^2;q^{6}\big ) j\big (qx^{-2};q^{6}\big )}
{\overline{J}_{0,2}\overline{J}_{0,6}J_{3,6}j\big (q^{4}x^{-2};q^6\big )}\\
& \ \ \ \ \ - \frac{J_{6}^3 j\big (x;q^{3}\big )  j\big (-x^2;q^{6}\big ) j\big (q^{3}x^{-2};q^{6}\big )}
{\overline{J}_{0,2}\overline{J}_{0,6}J_{3,6}j\big (x^{2};q^6\big )}\Big ] \\
&=\frac{3}{2} \frac{J_{3}^3}{\overline{J}_{0,2}} - \lim_{x\rightarrow q}\Big [\frac{J_{6}^3 j\big (q^{2}x;q^{3}\big )  j\big (-q^{4}x^2;q^{6}\big ) 
j\big (qx^{2};q^{6}\big )}
{\overline{J}_{0,2}\overline{J}_{0,6}J_{3,6}j\big (q^{2}x;q^3\big )j\big (-q^{2}x;q^3\big )}\frac{J_{3}^2}{J_{6}}\Big ] 
 + 2\frac{J_{6}^3 J_{1}\overline{J}_{2,6} J_{1,6}}
{\overline{J}_{0,2}\overline{J}_{0,6}J_{3,6}J_{2}}\\
&= \frac{3}{2} \frac{J_{3}^3}{\overline{J}_{0,2}}- \frac{1}{2}\frac{J_{3}^3} 
{\overline{J}_{0,2}}
 + 2\frac{J_{6}^3 J_{1}\overline{J}_{2,6} J_{1,6}}
{\overline{J}_{0,2}\overline{J}_{0,6}J_{3,6}J_{2}}.
\end{align*}}%
Continuing, we have
{\allowdisplaybreaks \begin{align*}
f_{3,3,1}(q^3,q,q)&=\frac{3}{2} \frac{J_{3}^3}{\overline{J}_{0,2}} - \frac{1}{2}\frac{J_{3}^3} 
{\overline{J}_{0,2}}
 + 2\frac{J_{6}^3 J_{1}\overline{J}_{2,6} J_{1,6}}
{\overline{J}_{0,2}\overline{J}_{0,6}J_{3,6}J_{2}}\\
&= \frac{J_{3}^3}{\overline{J}_{0,2}}\cdot  \Big [ 1
 + \frac{\overline{J}_{3,6}\overline{J}_{3,12}}{\overline{J}_{1,6}\overline{J}_{1,3}}\Big ]\\
 &= \frac{1}{2}\cdot \frac{J_{1}^2J_{6}^2}{J_{2}^2J_{4}J_{12}} \Big [\overline{J}_{1,6}\overline{J}_{1,3}
 + \overline{J}_{3,6}\overline{J}_{3,12}\Big ]\\
  &= J_{1,4}J_{6,12}\cdot \frac{2}{\overline{J}_{0,1}\overline{J}_{0,2}} \Big [\overline{J}_{1,6}\overline{J}_{1,3}
 + \overline{J}_{3,6}\overline{J}_{3,12}\Big ]\\
 &=J_{1,4}J_{6,12},
\end{align*}}%
where the last equality follows from Lemma \ref{lemma:N4-theta-evaluation}.

We prove (\ref{equation:N4-Hecke-evaluation-4}).   Using (\ref{equation:f-shift}) with $(R,S)=(0,1)$, it is equivalent to show
\begin{align*}
f_{1,5,1}(q^7,q,q)=-qJ_{1}\overline{J}_{6,24}.
\end{align*}
We recall Theorem \ref{theorem:genfn4}.   Arguing as in the proof of identity (\ref{equation:N4-Hecke-evaluation-2}), we find that under the substitutions $x\rightarrow q^7$ and $y\rightarrow q$, we have
\begin{equation*}
g_{1,5,1}(x,y,q,y/x,x/y)
\rightarrow j(q;q)m\Big (-q^{16},q^{24},q^{-6}\Big )
 + j(q^7;q)m\Big (-q^{-20},q^{24},q^6\Big )=0
\end{equation*}
and that 
{\allowdisplaybreaks \begin{align*}
\Theta_{1,4}(x,y,q)
&\rightarrow \frac{q^9}{j(-q^{38};q^{24})j(-q^{14};q^{24})}\Big \{ J_{4,16}  S_1-qJ_{8,16} S_2\Big \}\\
&\ \ \ \ \ =\frac{q^{23}}{\overline{J}_{10,24}^2}\Big \{ J_{4,16}  S_1-qJ_{8,16} S_2\Big \}
\end{align*}}%
with
\begin{align*}
S_1&=\frac{j(q^{38};q^{24})j(-q^{6};q^{24})j(q^{13};q^{12})}{J_{12}^3J_{48}}
 \cdot \frac{q^{19} j(-q^{38};q^{24})j(q^{6};q^{24})^2j(-q^{-6};q^{24})^2}{J_{24}}\\
 &=q^{-22}\frac{J_{10,24}\overline{J}_{6,24}J_{1,12}}{J_{12}^3J_{48}}
 \cdot \frac{\overline{J}_{10,24}J_{6,24}^2\overline{J}_{6,24}^2}{J_{24}}
 \end{align*}
 and
{\allowdisplaybreaks \begin{align*}
S_2&=\frac{j(q^{26};q^{24})j(-q^{-6};q^{24})j(q^{19};q^{12})}{J_{12}^2} 
 \cdot \frac{q^{8}j(-q^{38};q^{24})j(q^{12};q^{48})^2}{J_{48} }\\
 &=q^{-21}\frac{J_{2,24}\overline{J}_{6,24}J_{5,12}}{J_{12}^2} 
 \cdot \frac{\overline{J}_{10,24}J_{12,48}^2}{J_{48} }.
\end{align*}}%
Assembling the pieces, we have
{\allowdisplaybreaks \begin{align*}
f_{1,5,1}(q^7,q,q)
&=-\frac{q^{23}}{\overline{J}_{10,24}^2}\Big [ J_{4,16} q^{-22}\frac{J_{10,24}\overline{J}_{6,24}J_{1,12}}{J_{12}^3J_{48}}
 \cdot \frac{\overline{J}_{10,24}J_{6,24}^2\overline{J}_{6,24}^2}{J_{24}}\\ 
& \ \ \ \ \ -q^{-20}J_{8,16} \frac{J_{2,24}\overline{J}_{6,24}J_{5,12}}{J_{12}^2} 
 \cdot \frac{\overline{J}_{10,24}J_{12,48}^2}{J_{48} }\Big ]\\
 &=-\frac{q\overline{J}_{6,24}J_{12,48}^2}{\overline{J}_{10,24}}\Big [ J_{4,16} \frac{J_{10,24}J_{1,12}}{J_{12}^3J_{48}}
 \cdot \frac{J_{24}^3}{J_{48}^2}
  -q^2J_{8,16} \frac{J_{2,24}J_{5,12}}{J_{12}^2} 
 \cdot \frac{1}{J_{48} }\Big ]\\
 &=-\frac{q\overline{J}_{6,24}J_{12,48}^2J_{1,12}J_{5,12}}{\overline{J}_{10,24}}\frac{J_{24}}{J_{12}^2}
 \Big [ J_{4,16} \frac{\overline{J}_{5,12}}{J_{12}^3J_{48}}
 \cdot \frac{J_{24}^3}{J_{48}^2}
  -q^2J_{8,16} \frac{\overline{J}_{1,12}}{J_{12}^2J_{48} }\Big ]\\
 &=-qJ_{1}\overline{J}_{6,24}\cdot 
 \frac{J_{4}}{J_{2}J_{6}J_{12}}
 \frac{1}{\overline{J}_{10,24}}
\cdot  \Big [ \overline{J}_{8,24} \overline{J}_{5,12}  \overline{J}_{3,12}
  -q^2\overline{J}_{4,24}  \overline{J}_{1,12} \overline{J}_{3,12}\Big ]\\
 &=-qJ_{1}\overline{J}_{6,24}\cdot 
 \frac{J_{4}}{J_{2}J_{6}J_{12}}
 \frac{1}{\overline{J}_{10,24}}\frac{J_{24}}{J_{12}^2}\\
& \ \ \ \ \ \cdot  \Big [ j(iq^4;q^{12}) j(-iq^4;q^{12}) \overline{J}_{5,12}  \overline{J}_{3,12}
  -q^2j(iq^2;q^{12}) j(-iq^2;q^{12})  \overline{J}_{1,12} \overline{J}_{3,12}\Big ],
 \end{align*}}%
where for the last equality we used (\ref{equation:1.12}).  Using Proposition \ref{proposition:Weierstrass} with $q\rightarrow q^{12}$, $a=-q^5$, $b=q^4$, $c=q^2$, $d=-i$ yields
 {\allowdisplaybreaks \begin{align*}
f_{1,5,1}(q^7,q,q)
 =-q\frac{J_{1}\overline{J}_{6,24}J_{4}J_{24}}{J_{2}J_{6}J_{12}\overline{J}_{10,24}J_{12}^2}
  \cdot  j(iq^5;q^{12}) j(-iq^5;q^{12}) J_{6,12}  J_{2,12}
  =-qJ_{1}\overline{J}_{6,24}.
 \end{align*}}%

We prove (\ref{equation:N4-Hecke-evaluation-5}).   We recall Corollary \ref{corollary:f331-expansion}.   The contribution from (\ref{equation:g331-id}) reads
{\allowdisplaybreaks \begin{align*}
h_{3,3,1}&( q^{5},q^4,q^2,-1,-1)+ qh_{3,3,1}( q^{7},q^6,q^2,-1,-1)\\
&=j(q^5;q^6)m(-q^3,q^4,-1)+j(q^4;q^2)m(-q^{-1},q^{12},-1)\\
& \ \ \ \ \ q\Big (j(q^7;q^6)m(-q^3,q^4,-1)+j(q^6;q^2)m(-q,q^{12},-1)\Big )\\
&=j(q^5;q^6)m(-q^3,q^4,-1)+ q j(q^7;q^6)m(-q^3,q^4,-1)\\
&=0,
\end{align*}}%
where for the last equality we used (\ref{equation:j-elliptic}).  Thus
{\allowdisplaybreaks \begin{align*}
f_{3,3,1}&( q^{5},q^4,q^2)+ qf_{3,3,1}( q^{7},q^6,q^2)\\
&=-\sum_{d=0}^{2}\frac{q^{2d(d+1)}j\big (q^{8+4d};q^{6}\big )  j\big (-q^{9-4d};q^{12}\big ) 
J_{12}^3j\big (q^{2+4d};q^{12}\big )}
{\overline{J}_{0,4}\overline{J}_{0,12}j\big (q^{-1};q^{12}\big )j\big (q^{3+4d};q^{12}\big )}\\
& \ \ \ \ \ -q\sum_{d=0}^{2}\frac{q^{2d(d+1)}j\big (q^{10+4d};q^{6}\big )  j\big (-q^{9-4d};q^{12}\big ) 
J_{12}^3j\big (q^{-2+4d};q^{12}\big )}
{\overline{J}_{0,4}\overline{J}_{0,12}j\big (q^{-5};q^{12}\big )j\big (q^{3+4d};q^{12}\big )}\\
&=-\frac{j\big (q^{8};q^{6}\big )  j\big (-q^{9};q^{12}\big ) 
J_{12}^3j\big (q^{2};q^{12}\big )}
{\overline{J}_{0,4}\overline{J}_{0,12}j\big (q^{-1};q^{12}\big )j\big (q^{3};q^{12}\big )}
 -\frac{q^{12}j\big (q^{16};q^{6}\big )  j\big (-q;q^{12}\big ) 
J_{12}^3j\big (q^{10};q^{12}\big )}
{\overline{J}_{0,4}\overline{J}_{0,12}j\big (q^{-1};q^{12}\big )j\big (q^{11};q^{12}\big )}\\
& \ \ \ \ \ -q\frac{j\big (q^{10};q^{6}\big )  j\big (-q^{9};q^{12}\big ) 
J_{12}^3j\big (q^{-2};q^{12}\big )}
{\overline{J}_{0,4}\overline{J}_{0,12}j\big (q^{-5};q^{12}\big )j\big (q^{3};q^{12}\big )}
 -q\frac{q^{4}j\big (q^{14};q^{6}\big )  j\big (-q^{5};q^{12}\big ) 
J_{12}^3j\big (q^{2};q^{12}\big )}
{\overline{J}_{0,4}\overline{J}_{0,12}j\big (q^{-5};q^{12}\big )j\big (q^{7};q^{12}\big )}\\
&=-q^{-1}\frac{J_{2}  \overline{J}_{3,12} J_{12}^3J_{2,12}}
{\overline{J}_{0,4}\overline{J}_{0,12}J_{1,12}J_{3,12}}
 +q^{-1}\frac{J_{2}  \overline{J}_{1,12} J_{12}^3J_{2,12}}
{\overline{J}_{0,4}\overline{J}_{0,12}J_{1,12}^2}\\
&\ \ \ \ \  +\frac{J_{2}  \overline{J}_{3,12} J_{12}^3J_{2,12}}
{\overline{J}_{0,4}\overline{J}_{0,12}J_{5,12}J_{3,12}}
 +\frac{J_{2} \overline{J}_{5,12} J_{12}^3J_{2,12}}
{\overline{J}_{0,4}\overline{J}_{0,12}J_{5,12}^2},
\end{align*}}%
where we have simplified using (\ref{equation:j-elliptic}).  Regrouping terms, we have
{\allowdisplaybreaks \begin{align*}
f_{3,3,1}&( q^{5},q^4,q^2)+ qf_{3,3,1}( q^{7},q^6,q^2)\\
&=-q^{-1}\frac{J_{2} J_{12}^3J_{2,12}}
{\overline{J}_{0,4}\overline{J}_{0,12}J_{1,12}}
\cdot \Big ( \frac{\overline{J}_{3,12}}{J_{3,12}}- \frac{\overline{J}_{1,12}}{J_{1,12}}\Big ) 
+\frac{J_{2} J_{12}^3J_{2,12}}
{\overline{J}_{0,4}\overline{J}_{0,12}J_{5,12}} 
\cdot \Big ( \frac{\overline{J}_{3,12}}{J_{3,12}}+ \frac{\overline{J}_{5,12}}{J_{5,12}}\Big ) \\
&=-q^{-1}\frac{J_{2} J_{12}^3J_{2,12}}
{\overline{J}_{0,4}\overline{J}_{0,12}J_{1,12}}
\cdot \Big ( \frac{-2qJ_{2,24}J_{16,24}}{J_{3,12}J_{1,12}}\Big ) 
+\frac{J_{2} J_{12}^3J_{2,12}}
{\overline{J}_{0,4}\overline{J}_{0,12}J_{5,12}} 
\cdot \Big (  \frac{2J_{8,24}J_{10,24}}{J_{3,12}J_{5,12}}\Big ) \\
&=2\frac{J_{2} J_{12}^3J_{2,12}J_{8}}
{\overline{J}_{0,4}\overline{J}_{0,12}J_{3,12}}
\cdot \Big ( \frac{J_{2,24}}{J_{1,12}^2}+\frac{J_{10,24}}{J_{5,12}^2}\Big ) \\
&=2\frac{J_{2} J_{12}^3J_{2,12}J_{8}}
{\overline{J}_{0,4}\overline{J}_{0,12}J_{3,12}}\cdot\frac{J_{24}}{J_{12}^2}
\cdot \Big ( \frac{\overline{J}_{1,12}}{J_{1,12}}+\frac{\overline{J}_{5,12}}{J_{5,12}}\Big ) \\
&=2\frac{J_{2} J_{12}^3J_{2,12}J_{8}}
{\overline{J}_{0,4}\overline{J}_{0,12}J_{3,12}}\cdot\frac{J_{24}}{J_{12}^2}
\cdot \frac{2J_{6,24}J_{16,24}}{J_{1,12}J_{5,12}} \\
&=J_{2}\overline{J}_{1,4},
\end{align*}}%
where we used (\ref{equation:H1Thm1.2A}) and  (\ref{equation:H1Thm1.2B}) for the second equality, regrouped terms, used elementary product rearrangements, used (\ref{equation:H1Thm1.2B}) for the penultimate equality, and then finished with more product rearrangements.

We prove (\ref{equation:N4-Hecke-evaluation-6}).   This follows from substituting $q\rightarrow -q$ in  (\ref{equation:N4-Hecke-evaluation-5}).  \qedhere

\end{proof}


\section{Computing level $N=2$ string functions: Examples}  \label{section:LevelN2-KP} 

\subsection{The string function $c_{20}^{20}-c_{02}^{20}$ (\ref{equation:KP-2-List2-A}):}
We give two proofs of identity (\ref{equation:KP-2-List2-A}).

For the first proof, we use Theorem \ref{theorem:Level3-theorem} to obain
{\allowdisplaybreaks \begin{align}
c_{20}^{20}=C_{0,0}^{2}(q)=\frac{q^{-\frac{1}{16}}}{J_{1}^3}\cdot J_{1,2}\overline{J}_{3,8},\\
c_{02}^{20}=C_{0,0}^{2}(q)=\frac{q^{\frac{7}{16}}}{J_{1}^3}\cdot J_{1,2}\overline{J}_{1,8}.
\end{align}}%
Combining terms and using (\ref{equation:1.7}), we have
\begin{equation*}
c_{20}^{20}-c_{02}^{20}
=\frac{q^{-\frac{1}{16}}}{J_{1}J_{2}}\cdot \Big ( \overline{J}_{3,8}-q^{\frac{1}{2}}\overline{J}_{7,8}\Big ) 
=\frac{q^{-\frac{1}{16}}}{J_{1}J_{2}}\cdot j(q^{1/2};q^2)
=\frac{q^{-\frac{1}{16}}}{J_{1}^2}j(q^{1/2};q^{3/2}),
\end{equation*}
where used (\ref{equation:j-split}) with $m=2$ and the two product rearrangements $J_{1,2}=J_{1}^2/J_{2}$ and $J_{1,4}=J_{1}J_{4}/J_{2}$.

For the second proof, we use \cite[Theorem $1.1$]{MPS} to obtain
\begin{align*}
c_{20}^{20}(q^2)-c_{02}^{20}(q^2)
&=C_{0,0}^{2}(q^2)-C_{2,0}^2(q^2)\\
&=\frac{q^{-\frac{1}{8}}}{J_{2}^3}\cdot \Big ( f_{2,2,1}(-q^3,q^2,q^2)-qf_{2,2,1}(-q^5,q^4,q^2)\Big ). 
\end{align*}
We next recall Corollary \ref{corollary:f221-expansion}.   We have
\begin{align*}
h_{2,2,1}&(-q^3,q^2,q^2,-1,-1)-qh_{2,2,1}(-q^5,q^4,q^2,-1,-1)\\
&=j(-q^3;q^4)m(q,q^2,-1)+j(q^2;q^2)m(-q,q^4,-1)\\
& \ \ \ \ \ -q\cdot \Big ( j(-q^5;q^4)m(q,q^2,-1)+j(q^4;q^2)m(-q,q^4,-1)\Big ) \\
&=j(-q^3;q^4)m(q,q^2,-1)-q j(-q^5;q^4)m(q,q^2,-1)\\
&=0,
\end{align*} 
where the last equality follows from (\ref{equation:j-elliptic}).  Hence from Corollary \ref{corollary:f221-expansion}:
{\allowdisplaybreaks \begin{align*}
 f_{2,2,1}&(-q^3,q^2,q^2)-qf_{2,2,1}(-q^5,q^4,q^2)\\
&=-\frac{q^2j\big (q^{6};q^{4}\big )  j\big (q;q^{4}\big ) 
J_{4}^3j\big (-q^{4};q^{4}\big )}
{4\overline{J}_{2,8}\overline{J}_{4,16}j\big (q;q^4\big )j\big (-q^{3};q^4\big )}
 +q\frac{j\big (q^6;q^{4}\big )  j\big (-q^3;q^{4}\big ) 
J_{4}^3j\big (-1;q^{4}\big )}
{4\overline{J}_{2,8}\overline{J}_{4,16}j\big (q^{-1};q^4\big )j\big (-q;q^4\big )} \\
&=\frac{J_{2,4}  J_{1,4} 
J_{4}^3\overline{J}_{0,4}}
{2\overline{J}_{2,8}\overline{J}_{4,16}J_{1,4}\overline{J}_{1,4}}\\
&=J_{1}J_{2},
\end{align*}}%
where the last equality follows from product rearrangements.

\section{Computing level $N=3$ string functions: Examples}  \label{section:LevelN3-KP}

\subsection{The string function $c_{12}^{30}$ (\ref{equation:KP-3-List2-A}):}

Using Theorem \ref{theorem:Level3-theorem} gives
\begin{equation}
c_{12}^{30}=C_{2,0}^{3}(q)
=\frac{q^{\frac{71}{120}}}{J_{1}^2}\cdot J_{3,15}.
\end{equation}

\subsection{The string function $c_{30}^{30}-c_{12}^{30}$ (\ref{equation:KP-3-List2-B}):}

Using Theorem \ref{theorem:Level3-theorem} gives
\begin{equation}
c_{30}^{30}=C_{0,0}^{3}(q)
=\frac{q^{-\frac{3}{40}}}{J_{1}^2}\cdot (J_{8,15}-qJ_{2,15}).
\end{equation}
From (\ref{equation:j-split}) with $m=3$ and (\ref{equation:j-elliptic}), we have
\begin{equation*}
J_{2,5}=J_{21,45}-q^2J_{36,45}+q^3J_{6,45},
\end{equation*}
and under the substitution $q\rightarrow q^{1/3}$ we have
\begin{equation*}
j(q^{2/3};q^{5/3})=J_{7,15}-q^{2/3}J_{12,15}-qJ_{2,15}.
\end{equation*}
Identity (\ref{equation:KP-3-List2-B}) is now straightforward
\begin{equation*}
c_{30}^{30}-c_{12}^{30}
=\frac{q^{-\frac{3}{40}}}{J_{1}^2}\cdot (J_{8,15}
-qJ_{2,15}-q^{2/3}J_{3,15})
=\frac{q^{-\frac{3}{40}}}{J_{1}^2}\cdot j(q^{2/3};q^{5/3}).
\end{equation*}

\subsection{The string function $c_{21}^{21}-c_{03}^{21}$ (\ref{equation:KP-3-List2-C}):}

Using Theorem \ref{theorem:Level3-theorem} gives
\begin{align}
c_{21}^{21}=C_{1,1}^{3}(q)&=\frac{q^{-\frac{1}{120}}}{J_{1}^2}\cdot J_{6,15},\\
c_{03}^{21}=C_{3,1}^{3}(q)&=\frac{q^{\frac{13}{40}}}{J_{1}^2}\cdot (J_{11,15}+qJ_{1,15}).
\end{align}
From (\ref{equation:j-split}) with $m=3$ and (\ref{equation:j-elliptic}), we have
\begin{equation*}
J_{1,5}=J_{18,45}-qJ_{33,45}-q^4J_{3,45}.
\end{equation*}
The substitution $q\rightarrow q^{1/3}$ yields
\begin{equation*}
j(q^{1/3};q^{5/3})=J_{6,15}-q^{1/3}J_{11,15}-q^{4/3}J_{1,15}.
\end{equation*}
Hence 
\begin{equation*}
c_{21}^{21}-c_{03}^{21}
=\frac{q^{-\frac{1}{120}}}{J_{1}^2} \Big [ J_{6,15} -q^{1/3}\cdot (J_{11,15} +qJ_{1,15}) \Big ] 
=\frac{q^{-\frac{1}{120}}}{J_{1}^2}\cdot j(q^{1/3};q^{5/3}).
\end{equation*}


\section{Computing level $N=4$ string functions: Examples}  \label{section:LevelN4-KP} 
\subsection{The string function $c_{40}^{40}-2c_{22}^{40}+c_{04}^{40}+2c_{04}^{22}-2c_{22}^{22}$ (\ref{equation:KP-4-List2-B})}  Using Theorem \ref{theorem:Level3-theorem} gives
{\allowdisplaybreaks \begin{align}
c_{40}^{40}&=C_{0,0}^{4}(q)=\frac{q^{-\frac{1}{12}}}{2J_{1}^2}\cdot (\overline{J}_{3,6}+J_{1,2}),\\
c_{22}^{40}&=C_{2,0}^{4}(q)=\frac{q^{\frac{2}{3}}}{J_{1}^2}\overline{J}_{6,24},\\
c_{04}^{40}&=C_{4,0}^{4}(q)=\frac{q^{-\frac{1}{12}}}{2J_{1}^2}\cdot (\overline{J}_{3,6}-J_{1,2}),\\
c_{40}^{22}&=C_{0,2}^{4}(q)=\frac{q^{\frac{1}{4}}}{J_{1}^2}\overline{J}_{1,6},\\
c_{22}^{22}&=C_{2,2}^{4}(q)=\frac{1}{J_1^2}\overline{J}_{2,6}.
\end{align}}%
Hence
{\allowdisplaybreaks \begin{align*}
&c_{40}^{40}-2c_{22}^{40}+c_{04}^{40}+2c_{04}^{22}-2c_{22}^{22}\\
&\ \ \ \ \ =\frac{1}{J_{1}^2}\cdot \Big ( q^{-\frac{1}{12}} \overline{J}_{3,6}
-2q^{\frac{2}{3}}\overline{J}_{6,24}
+2q^{\frac{1}{4}}\overline{J}_{1,6}
-2\overline{J}_{2,6}\Big ) \\
&\ \ \ \ \ =\frac{1}{J_{1}^2}\cdot \Big ( q^{-\frac{1}{12}}(\overline{J}_{12,24}+q^{3}\overline{J}_{0,24}) 
-2q^{\frac{2}{3}}\overline{J}_{6,24}
+2q^{\frac{1}{4}}(\overline{J}_{8,24}+q\overline{J}_{20,24})\\
& \ \ \ \ \ \ \ \ \ \ -2(\overline{J}_{10,24}+q^2\overline{J}_{22,24})\Big ) \\
&\ \ \ \ \ =\frac{q^{-\frac{1}{12}}}{J_{1}^2}\cdot \Big ( \overline{J}_{12,24}+q^{3}\overline{J}_{0,24}
-2q^{\frac{3}{4}}\overline{J}_{6,24}
+2q^{\frac{1}{3}}\overline{J}_{8,24}+2q^{\frac{4}{3}}\overline{J}_{20,24}\\
&\ \ \ \ \ \ \ \ \ \ -2q^{\frac{1}{12}}\overline{J}_{10,24}-2q^{\frac{25}{12}}\overline{J}_{22,24}\Big )\\
&\ \ \ \ \ =\frac{q^{-\frac{1}{12}}}{J_{1}^2}\cdot j(q^{1/12};q^{1/6}),
\end{align*}}%
where the second equality follows from (\ref{equation:j-split}) with $m=2$, and the last equality follows from Lemma \ref{lemma:levelN4-jsplit}.


\section*{Acknowledgements}
We would like to thank O. Warnaar for helpful comments and suggestions.  This research was supported by Ministry of Science and Higher Education of the Russian Federation, agreement No. 075-15-2019-1619.

\end{document}